\newtheorem{thm}{Theorem}[section]
\newtheorem{cor}[thm]{Corollary}
\newtheorem{lm}[thm]{Lemma}
\newtheorem{clm}[thm]{Claim}
\newtheorem*{clm*}{Claim}
\theoremstyle{definition}
\newtheorem{exmp}[thm]{Example}
\newtheorem{remrks}[thm]{Remarks}
\numberwithin{equation}{section}
\DeclareMathOperator{\atyp}{\mathbf{2}}
\DeclareMathOperator{\btyp}{\mathbf{3}}
\DeclareMathOperator{\ltyp}{\mathbf{4}}
\newcommand{\m}[1]{{\mathbf{\uppercase{#1}}}}
\newcommand{\al}[1]{\mathbf{#1}}
\newcommand{\rel}[1]{{\mathcal{\uppercase{#1}}}}
\DeclareMathOperator{\Con}{Con}
\DeclareMathOperator{\Aut}{Aut}
\DeclareMathOperator{\Hom}{Hom}
\DeclareMathOperator{\id}{\textrm{id}}     
\newcommand{\var}[1]{{\mathcal{\uppercase{#1}}}}
\newcommand{\Ovar}[1]{\var{#1}_{\mathcal{O}}}
\newcommand{\Su}{\mathsf{S}}
\newcommand{\Pd}{\mathsf{P}}
\newcommand{\lv}{{[\kern-1.8pt[}}  
\newcommand{\rv}{{]\kern-1.8pt]}}
\newcommand{\interval}[2]{\lv\, {#1},{#2}\,\rv}
\newcommand{\restr}{{\restriction}}
\newcommand{\proj}{\mathrm{proj}}
\newcommand{\crit}{{\rm crit}}
\newcommand{\abgr}[3]{\al{M}_{\al #1}(#2,#3)}
\newcommand{\Calpha}[1]{\abgr{C}{\alpha}{#1}}
\newcommand{\module}[2]{\abgr{#1}{#2}{\mathcal{O}}}
\newcommand{\MBalpha}{\module{B}{\alpha_{\al{B}}}}
\newcommand{\MBsatalpha}{\module{B[\alpha]}{\alpha_{\al{B[\alpha]}}}}
\newcommand{\MBialpha}{\module{B_i}{\alpha_i}}
\newcommand{\MBbaralpha}{\al{M}_{\bar{\al B}}(\bar{\alpha},\mathcal{O})}
\newcommand{\MCalpha}{\module{C}{\alpha}}
\newcommand{\MCCalpha}{\module{C}{\alpha_{\al{C}}}}
\newcommand{\MUalpha}{\module{U}{\alpha_{\al{U}}}}
\newcommand{\MValpha}{\module{V}{\alpha_{\al{V}}}}
\newcommand{\MEalpha}{\module{E}{\alpha_{\al{E}}}}
\newcommand{\MTalpha}{\module{T}{\alpha}}
\newcommand{\dentails}{\models_{\text{\rm d}}}
\newcommand{\Mod}{\text{\rm Mod}}
\newcommand{\setMBalpha}{M_{\al B}(\alpha_{\al B},\mathcal{O})}
\newcommand{\wec}[1]{{\mathbf{#1}}}  
\newcommand{\tupl}[2]{\lceil{#1}\rceil^{#2}}
\newcommand{\otupl}[1]{\tupl{#1}{o}}
\newcommand{\aaa}{\mathsf{a}}
\newcommand{\ccc}{\mathsf{c}}
\newcommand{\ee}{\mathsf{e}}
\newcommand{\hh}{\mathsf{h}}
\newcommand{\ii}{\mathsf{i}}
\newcommand{\pp}{\mathsf{p}}
\newcommand{\sss}{\mathsf{s}}
\let\phi=\varphi
\let\epsilon=\varepsilon
\let\bar=\overline
\let\hat=\widehat
\let\tilde=\widetilde
\begin{document}

\title[Dualizable algebras]{Dualizable algebras with parallelogram terms}

\author{Keith A. Kearnes}
\address[Keith Kearnes]{Department of Mathematics\\
University of Colorado\\
Boulder, CO 80309-0395\\
USA}
\email{Keith.Kearnes@Colorado.EDU}

\author{\'Agnes Szendrei}
\address[\'Agnes Szendrei]{Department of Mathematics\\
University of Colorado\\
Boulder, CO 80309-0395\\
USA}
\email{Agnes.Szendrei@Colorado.EDU}

\thanks{This material is based upon work supported by
the Hungarian National Foundation for Scientific Research (OTKA)
grant no.\ K83219 and K104251.
}
\subjclass[2010]{Primary: 08C20; Secondary: 08A05, 08A40}
\keywords{natural duality, dualizable algebra, parallelogram term, cube term}

\begin{abstract}
We prove that if $\m a$ is a finite algebra
with a parallelogram term that satisfies 
the split centralizer condition,
then $\m a$ is dualizable. This yields
yet another proof of the dualizability
of any finite algebra with a near unanimity term,
but more importantly proves that every finite 
module, group or ring in a residually small
variety is dualizable.
\end{abstract}

\maketitle

\section{Introduction}\label{intro}

Natural Duality Theory investigates categorical
dualities that are mediated by finite 
algebras.
One of the main goals of the theory is to
identify those finite algebras that can serve
as character algebras for a duality. Such algebras are
called \emph{dualizable}. Although general criteria
for dualizability 
are known, the problem of identifying the finite
algebras satisfying the criteria is still difficult.
The broadest natural class of algebras
where the problem has been solved is the class of
finite algebras $\m a$ such that the prevariety
$\Su\Pd(\m a)$ has tame congruence-theoretic
typeset 
contained in $\{\btyp, \ltyp\}$.
According to the Big NU Obstacle Theorem
from~\cite{davey-heindorf-mckenzie}
a finite algebra satisfying this assumption is 
dualizable if and only if it has a near
unanimity term operation.

This paper probes the broader class of 
finite algebras $\m a$ such that the prevariety
$\Su\Pd(\m a)$ has tame congruence-theoretic
typeset contained in $\{\atyp, \btyp, \ltyp\}$.
There is a natural analogue of a near unanimity term
operation that is likely to be involved in this context,
called a ``parallelogram term operation''. Recently 
M.~Moore \cite{moore} has announced an extension of one direction
of the Big NU Obstacle Theorem, namely,
under the assumption that $\m a$
is finite and 
$\Su\Pd(\m a)$ has tame congruence-theoretic
typeset contained in $\{\atyp, \btyp, \ltyp\}$,
it is the case that if $\m a$ is dualizable then it 
must have a parallelogram term operation. Known examples
show that an unmodified converse
to this statement cannot hold. Thus, we expect that 
there is some condition $\varepsilon$ such that,
if $\m a$
is finite and 
$\Su\Pd(\m a)$ has tame congruence-theoretic
typeset contained in $\{\atyp, \btyp, \ltyp\}$, then 
$\m a$ is dualizable if and only if $\m a$
has a parallelogram term operation and condition $\varepsilon$ holds.

In this paper we produce a candidate for condition
$\varepsilon$ under the additional assumption that
$\m a$ lies in a residually small variety.
We call the condition the ``split centralizer condition''.
Specifically, we operate under the global assumptions
that $\m a$
is a finite algebra from a residually small variety and 
$\Su\Pd(\m a)$ has tame congruence-theoretic
typeset contained in $\{\atyp, \btyp, \ltyp\}$.
We conjecture that, under the global assumptions, 
$\m a$ will be dualizable if and only if 
$\m a$ has a parallelogram term 
operation and satisfies the split centralizer condition.
What we prove in this paper is the ``if'' part of this conjecture.

Let us reformulate the statement that we prove.
It is a fact that a finite algebra with a parallelogram term 
automatically has the property that
$\Su\Pd(\m a)$ has tame congruence-theoretic
typeset contained in $\{\atyp, \btyp, \ltyp\}$.
Thus, our result is that 
if $\m a$ is finite, lies in a residually small variety,
has a parallelogram term, and satisfies the split centralizer
condition, then $\m a$ is dualizable.
Interestingly, our result is strong enough to
prove the dualizability of the dualizable algebras
in residually small varieties with a parallelogram term
in all the known cases,
and in many new cases, and yet neatly avoids
an obstacle to dualizability identified by Idziak in 1994.

Let us state the split centralizer condition.
Let $\m b$ be a finite algebra and let $\mathcal Q$
be a quasivariety containing $\m b$. A \emph{$\mathcal Q$-congruence}
on $\m b$ is a congruence $\kappa$ of $\m b$
such that $\m b/\kappa\in \mathcal Q$.
Let $\delta$ be a meet irreducible congruence on $\m b$
with upper cover $\theta$, and let $\nu = (\delta:\theta)$
be the centralizer of $\theta$ modulo $\delta$.
We will say that the triple of congruences $(\delta,\theta,\nu)$
which arises in this way
is \emph{split} (relative to $\mathcal Q$)
by a triple $(\alpha,\beta,\kappa)$ of congruences of $\m b$ if
\begin{enumerate}
\item[(i)] $\kappa$ is a $\mathcal Q$-congruence on $\m b$,
\item[(ii)] $\beta\leq \delta$,
\item[(iii)] $\alpha\wedge\beta = \kappa$,
\item[(iv)] $\alpha\vee\beta = \nu$, and
\item[(v)] 
$\alpha/\kappa$ is abelian (or equivalently, $[\alpha,\alpha]\leq\kappa$).
\end{enumerate}
Now, if 
$\theta/\delta$
is nonabelian, then 
$\nu=(\delta:\theta) = \delta$, so 
$(\delta,\theta,\nu)$ will be split by 
$(\alpha,\beta,\kappa):=(0,\delta,0)$. Therefore 
splitting is only in question when 
$\theta/\delta$
is abelian. Henceforth we shall focus only on this case
and call a triple $(\delta,\theta,\nu)$ 
of congruences on $\m b$ \emph{relevant}
if 
\begin{enumerate}
\item[(a)] $\delta$ is completely meet irreducible, 
\item[(b)] $\theta$ is the upper cover of $\delta$,
\item[(c)] $\nu = (\delta:\theta)$, and 
\item[(d)] 
$\theta/\delta$ is abelian.
\end{enumerate}

The 
\emph{split centralizer condition} 
for a finite algebra
$\m a$ is the condition that, for $\mathcal Q := \Su\Pd(\m a)$
and for any subalgebra $\m b\leq\m a$, each
relevant triple $(\delta,\theta,\nu)$ of $\m b$
is split (relative to $\mathcal Q$) by
some triple $(\alpha,\beta,\kappa)$.
The relationships between 
these congruences of $\al B$
is 
depicted in Figure~1.

\begin{figure}
\setlength{\unitlength}{1mm}
\begin{picture}(70,55)
\thicklines
\put(15,0){%
\bezier{200}(20,0)(0,0)(0,20)
\bezier{200}(0,20)(0,40)(20,40)
\bezier{200}(20,40)(40,40)(40,20)
\bezier{200}(40,20)(40,0)(20,0)

\put(20,0){\circle*{1.3}} 
\put(20,40){\circle*{1.3}} 
\put(19,-4){$0$} 
\put(19,41){$1$} 
\put(-13,35){$\Con(\al B)$}

\put(8,23){\circle*{1.3}}
\put(11,26){\circle*{1.3}}
\bezier{200}(8,23)(9.5,24.5)(11,26)

\bezier{200}(11,26)(16,40)(20,40)
\bezier{200}(11,26)(24,30)(24,34)
\bezier{200}(20,40)(24,40)(24,34)

\put(6,24){{\small $\delta$}}
\put(9,27){{\small $\theta$}}

\put(50,22){{\small $\beta\leq\delta$}}

\put(20,35){\circle*{1.3}}
\bezier{200}(11,26)(13,33)(20,35)
\bezier{200}(11,26)(18,28)(20,35)
\put(19,36){{\small $\nu$}}

\put(50,32){{\small $\nu=(\delta:\theta)$}}

\put(21,4){{\small $\kappa$}}
\put(20,5){\circle{1.5}}

\put(2,20){{\small $\beta$}}
\put(5,20){\circle*{1.3}}
\put(35,20){\circle*{1.3}}
\bezier{200}(5,20)(6.5,21.5)(8,23)
\put(5,20){\line(1,-1){14.5}}
\put(20,35){\line(1,-1){15}}
\put(20.5,5.5){\line(1,1){14.5}}
\put(36,20){{\small $\alpha$}}

\put(50,17){{\small $\alpha\wedge\beta=\kappa$}}
\put(50,12){{\small $\alpha\vee\beta=\nu$}}
\put(50,7){{\small $[\alpha,\alpha]\leq\kappa$}}
}
\end{picture}                             
\caption{}
\end{figure}

It is not hard to show that if a finite algebra 
with a parallelogram term satisfies
the split centralizer condition, then it generates a residually 
small variety (combine Corollary~\ref{cor-ssc-rs} and 
Theorem~\ref{thm-parterm-cm}).
Therefore our main result can be restated as follows.

\begin{thm}
\label{thm-main}
If $\al A$ is a finite algebra with a parallelogram term and $\al A$
satisfies the split centralizer condition, then $\al A$ is dualizable.
\end{thm}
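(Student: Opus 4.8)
The plan is to verify dualizability through the standard criterion: exhibit an alter ego $\tilde{\mathbf{A}}$ of finite type on the set $A$ and show that it dualizes $\mathbf{A}$. I would build $\tilde{\mathbf{A}}$ from the basic operations of $\mathbf{A}$ (in particular the parallelogram term), a finite set $\mathcal{R}$ of subalgebras of small powers $\mathbf{A}^k$, and a finite set of partial operations encoding the affine (module) structure carried by the abelian prime quotients that occur inside subalgebras of $\mathbf{A}$. Since $\tilde{\mathbf{A}}$ has finite type, invoke the Duality Compactness Theorem to reduce the claim that $\tilde{\mathbf{A}}$ dualizes $\mathbf{A}$ to checking it on finite members of the prevariety $\mathcal{Q}:=\Su\Pd(\mathbf{A})$: for each finite $\mathbf{B}\in\mathcal{Q}$ one must show the evaluation map $e_{\mathbf{B}}\colon \mathbf{B}\to \mathbf{E}\,\mathbf{D}(\mathbf{B})$ is onto, i.e.\ every morphism $\phi\colon \mathbf{D}(\mathbf{B})\to\tilde{\mathbf{A}}$ is evaluation $e_b$ at some $b\in B$. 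Using that $\mathbf{D}(\mathbf{B})=\Hom(\mathbf{B},\mathbf{A})$ is closed under the relations in $\mathcal{R}$, and a routine finiteness argument, this reduces to producing a single ``ghost point'' $b_0\in B$ with $g(b_0)=\phi(g)$ for all $g\colon\mathbf{B}\to\mathbf{A}$ simultaneously; the difficulty is coherence across all $g$, which is exactly what $\mathcal{R}$ and the partial operations must enforce.

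The structural input is that a finite algebra with a parallelogram term generates a congruence modular variety, and under the split centralizer condition a residually small one (Corollary~\ref{cor-ssc-rs} and Theorem~\ref{thm-parterm-cm}). Hence the full modular commutator calculus is available on every $\mathbf{B}\in\mathcal{Q}$, abelian prime quotients are affine over finite rings, and the subdirectly irreducible quotients of $\mathbf{B}$ are (isomorphic to) subalgebras of $\mathbf{A}$. I would construct $b_0$ by induction on $|B|$ (equivalently on the height of $\Con(\mathbf{B})$), reducing via subdirect decompositions to the case that $\mathbf{B}$ is subdirectly irreducible with monolith $\mu$ and critical quotient $\mu/0$. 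If $\mu$ is nonabelian, then $(0:\mu)=0$, the relevant triple is split trivially, the behaviour of $\phi$ near $\mathbf{B}$ is near‑unanimity‑like, and $\phi$ is pinned down by the relations in $\mathcal{R}$ (a Baker--Pixley‑style argument, in which the parallelogram term bounds the arity of relations needed to generate the relevant subalgebras of powers, plays the role of the near unanimity term). If $\mu$ is abelian, apply the split centralizer condition to $\mathbf{B}$ (a subalgebra of $\mathbf{A}$) — more generally, for non‑subdirectly‑irreducible reductions, to the subdirectly irreducible quotients of $\mathbf{B}$, pulled back along the quotient maps. This produces a triple $(\alpha,\beta,\kappa)$ with $\beta\leq\delta$, $\alpha\wedge\beta=\kappa$, $\alpha\vee\beta=\nu=(\delta:\theta)$, $[\alpha,\alpha]\leq\kappa$ and $\mathbf{B}/\kappa\in\mathcal{Q}$, which exhibits the relevant quotient of $\mathbf{B}$ as a subdirect product of $\mathbf{B}/\alpha$, where $\alpha/\kappa$ is abelian and the critical quotient is captured by module duality, and $\mathbf{B}/\beta$, where $\delta$ has been collapsed and the inductive hypothesis applies. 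The partial module operations in $\tilde{\mathbf{A}}$ force $\phi$ to respect the affine structure on the $\alpha/\kappa$‑classes, determining its ``linear'' part, while the relations in $\mathcal{R}$ splice the two factors, producing the residue of $b_0$ one completely meet irreducible congruence at a time.

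That $\mathcal{R}$ and $\tilde{\mathbf{A}}$ can be taken finite is established regime by regime: on the nonabelian skeleton the parallelogram term yields the arity bound described above, while on the abelian pieces it is finiteness of the coordinatizing rings and of $A$ — so that finitely generated modules over them have bounded size — that bounds the relevant complexity; the split centralizer condition is precisely what makes these two descriptions fit together along the congruences $\kappa$ without reintroducing unbounded complexity, and it is also what sidesteps the Idziak obstacle, since the abelian part is never mixed into the rest in an uncontrolled way.

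I expect the main obstacle to be the coherence of this induction: the splittings $(\alpha,\beta,\kappa)$ chosen at the various completely meet irreducible congruences of $\mathbf{B}$ must be made mutually compatible, and compatible with the module structures, so that the partial ghost residues produced at successive stages agree and assemble into one point $b_0\in B$. Organizing this amounts to proving that the relevant quotients of $\mathbf{B}$ decompose coherently as iterated subdirect products of module‑like and residually‑small near‑unanimity‑like pieces, all of which are visible to $\tilde{\mathbf{A}}$; carrying this out is the technical heart of the argument, and it is here that both the parallelogram term (to run the commutator and decomposition theory) and the split centralizer condition (to keep the abelian pieces under control) are indispensable.
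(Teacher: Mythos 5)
Your framework (alter ego of finite type, Duality Compactness, and surjectivity of evaluation via a ``ghost point'') is a legitimate route in principle, but as written it has two genuine gaps that leave the proof unfinished, and you candidly flag one of them yourself.

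First, the ``nonabelian monolith'' branch of your induction does not work as described. A $k$-parallelogram term can be a Maltsev term, which is as far from near unanimity as possible; there is no Baker--Pixley-style pinning-down argument available from a parallelogram term alone, and indeed finite Maltsev algebras need not be dualizable. The paper avoids needing such an argument entirely: rather than inducting on $|B|$ for arbitrary $\al B\in\Su\Pd(\al A)$, it passes to \emph{critical relations} of $\al A$ and invokes the structure theorem (Corollary~\ref{cor-paralg}, Theorem~\ref{thm-paralg}) which says that every critical relation of arity $n\ge\max(3,k)$ is a subdirect product of subalgebras $\al A_i\le\al A$ all of whose relevant quotients $\al A_i/\delta_i$ have \emph{abelian} monolith, with the centralizer quotients $\al A_i/\nu_i$ isomorphic across the coordinates. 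Thus after the reduction to critical relations there simply is no nonabelian SI case to handle, and --- crucially --- the cross-coordinate isomorphisms $\iota_{ij}$ supply exactly the coherence you identify as missing. Second, you identify the assembly of the splittings across the (unboundedly many) meet irreducible congruences of $\al B$ as ``the technical heart'' and stop there; this is precisely where the paper does the work. Concretely, Theorem~\ref{thm-reduction} uses the split centralizer condition together with the bounded number of $\sim$-classes to replace each large critical relation $C$ by a relation $B\in\rel{R}^*(\al A)$ carrying an abelian congruence $\tilde\alpha_{\al B}$ of index bounded by $\ii$ (independent of $n$), and then Sections~4--5 exploit the finiteness of the ring $\al R(\var{V},\mathcal{O})$ and a pigeonhole on the finitely many $\al R(\var{V},\mathcal{O})$-module homomorphisms $\psi_i$ to reduce the arity by a fixed amount $\ee$ at each step. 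Without these two ingredients --- the critical-relation structure theorem and the module/pigeonhole reduction --- your induction has no engine, so the proposal as it stands does not constitute a proof.
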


Section~2 summarizes preliminaries.
Sections~3 through 5 are devoted to proving
Theorem~\ref{thm-main}.
In Section~6, we apply Theorem~\ref{thm-main} to prove that
if $\var{V}$ is a variety with a parallelogram term
in which every finite subdirectly irreducible algebra is either 
abelian or neutral or almost neutral, then every finite algebra in 
$\var{V}$ is dualizable (Theorem~\ref{maincor}).
This generalizes the following known results:
\begin{enumerate}
\item[(1)] every
finite algebra with a near unanimity term
is dualizable~\cite{davey-werner},
\item[(2)] every paraprimal algebra is dualizable~\cite{davey-quackenbush}, 
and 
\item[(3)]
every finite commutative ring $R$ with unity whose Jacobson radical 
squares to zero is dualizable~\cite{commutative_rings}.
\end{enumerate}
Theorem~\ref{maincor}
also implies the new results that
\begin{enumerate}
\item[(4)] every finite module is dualizable,
and 
\item[(5)] every finite ring (commutative or not, unital or not) 
in a residually small variety is dualizable.
\end{enumerate}
We also show how to apply Theorem~\ref{thm-main}
to deduce that 
every finite group with abelian Sylow subgroups is 
dualizable \cite{nickodemus}. 
This last theorem is not a consequence of 
Theorem~\ref{maincor}.

\section{Preliminaries}\label{prelim}

Algebras will be denoted by boldface letters, their universes by the 
same letters in italics. For arbitrary algebras $\al A$ and $\al B$,
$\Aut(\al A)$ denotes the automorphism group of $\al A$,
$\Con(\al A)$ the congruence lattice of $\al A$, and $\Hom(\al A,\al B)$
the set of all homomorphisms $\al A\to\al B$.
The top and bottom elements of $\Con(\al A)$ are denoted $1$ and $0$, 
respectively, and the identity map on any set $A$ is denoted $\id_A$.
The variety generated by an algebra $\al A$ is denoted by $\var{V}(\al A)$.
We will write $\al B\le\al A$ to indicate that $\al B$ is a subalgebra of
$\al A$. By a \emph{section} of $\al A$ we mean a quotient of a subalgebra 
of $\al A$.

Let $\vartheta$ be an equivalence relation on a set $A$.
The $\vartheta$-class of an element $a\in A$ is denoted by $a/\vartheta$,
and the number of 
equivalence classes of $\vartheta$ may be referred to as 
\emph{the index of $\vartheta$}. 
We will often write $a\equiv_{\vartheta} b$ instead of $(a,b)\in\vartheta$.
For $B\subseteq A$ the restriction of 
$\vartheta$ to
$B$ is denoted by $\vartheta_B$. If $\al B\le\al A$ and 
$\vartheta$ is a congruence
of $\al A$, we may write $\vartheta_{\al B}$ for $\vartheta_B$, which is
a congruence of $\al B$.

Let $\vartheta$ be a congruence of an algebra $\al A$, and let
$\al B$ be a subalgebra of $\al A$.
We say that $\al B$ is \emph{saturated with respect
to $\vartheta$}, or $\al B$ is a \emph{$\vartheta$-saturated subalgebra}
of $\al A$, if $b\in\al B$ and $b\equiv_{\vartheta}a$ imply 
$a\in\al B$ for all $a\in\al A$.
In other words, $\al B$ is $\vartheta$-saturated if and only if
its universe is a union of $\vartheta$-classes of $\al A$.
For arbitrary subalgebra $\al B$ of $\al A$ there exists a smallest
$\vartheta$-saturated subalgebra of $\al A$ that contains $\al B$, which we
denote by $\al B[\vartheta]$; the universe of $\al B[\vartheta]$
is $B[\vartheta]:=\bigcup_{b\in B}b/\vartheta$. By the second 
isomorphism theorem the map 
$\al B/\vartheta_{\al B}\to\al B[\vartheta]/\vartheta_{\al B[\vartheta]}$,
$b/\vartheta_{\al B}\mapsto b/\vartheta_{\al B[\vartheta]}(=b/\vartheta)$
is an isomorphism, so the indices of $\vartheta_{\al B}$ and
$\vartheta_{\al B[\vartheta]}$ are equal.

For every natural number $m$ we will use the notation $[m]$ for the set
$\{1,2,\dots,m\}$.

\subsection{The Modular Commutator and Residual Smallness}
For the definition and basic properties of the commutator operation 
$[\phantom{n},\phantom{n}]$ on congruence lattices of algebras in
congruence modular varieties the reader is referred to \cite{freese-mckenzie}.
To avoid confusion, intervals in congruence lattices will be denoted by 
$\interval{\phantom{n}}{\phantom{n}}$.
Recall that a congruence $\alpha\in\Con(\al A)$ of an algebra $\al A$ is called
\emph{abelian} if $[\alpha,\alpha]=0$, and an interval
$\interval{\beta}{\alpha}$ in $\Con(\al A)$
is called abelian if $[\alpha,\alpha]\le\beta$,
or equivalently, if the congruence $\alpha/\beta\in\Con(\al A/\beta)$
is abelian.
For two congruences $\beta\le\alpha$ in $\Con(\al A)$ \emph{the centralizer of
$\alpha$ modulo $\beta$}, denoted $(\beta:\alpha)$, is the largest congruence
$\gamma\in\Con(\al A)$ such that $[\alpha,\gamma]\le\beta$.
The \emph{center} of $\al A$ is the congruence $\zeta:=(0:1)$.

For a cardinal $c$, 
a variety $\var{V}$ is called \emph{residually less than $c$} 
if every subdirectly irreducible algebra in $\var{V}$ has cardinality
$<c$; $\var{V}$ is called \emph{residually small} if it is residually 
less than some cardinal.
It is proved in \cite[Theorem~10.14]{freese-mckenzie} that for a congruence
modular variety $\var{V}$ to be residually small, it is necessary that 
the congruence lattice of every algebra $\al A\in\var{V}$ satisfy
the commutator identity
\begin{align}
[x\wedge y,y]&= x\wedge[y,y],\tag{C1}
\end{align}
which can also be expressed by the implication $x\le[y,y]\to x=[x,y]$ 
(see \cite[Theorem~8.1]{freese-mckenzie}).
For finitely generated varieties the converse is also true, as the following
theorem states.

\begin{thm}[{{}From \cite[Theorem~10.15]{freese-mckenzie}}]
\label{thm-rs}
Let $\al A$ be a finite algebra that generates a congruence modular variety
$\var{V}(\al A)$. Then the following conditions are equivalent:
\begin{enumerate}
\item[{\rm(a)}]
$\var{V}(\al A)$ is residually small,
\item[{\rm(b)}]
$\var{V}(\al A)$ is residually ${}<q$ for some natural number $q$,
\item[{\rm(c)}]
{\rm(C1)} holds in the congruence lattice of every subalgebra of $\al A$.
\end{enumerate}
\end{thm}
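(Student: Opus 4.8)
The implication (b)$\Rightarrow$(a) is immediate from the definitions, and (a)$\Rightarrow$(c) is exactly the content of the result quoted above from \cite[Theorem~10.14]{freese-mckenzie}: if $\var V(\al A)$ is residually small then (C1) holds in $\Con(\al C)$ for \emph{every} $\al C\in\var V(\al A)$, so in particular for every subalgebra $\al C\le\al A$. Thus the substance of the theorem is the implication (c)$\Rightarrow$(b), and that is where I would concentrate.

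Since a variety is residually ${}<q$ for a natural number $q$ exactly when all of its subdirectly irreducible members have fewer than $q$ elements, and since $\var V(\al A)$ is locally finite, it suffices to produce one natural number bounding $|\al S|$ for every subdirectly irreducible $\al S\in\var V(\al A)$. Fix such an $\al S$ with monolith $\mu$, and split into cases according to whether $\mu$ is abelian. If $\mu$ is nonabelian then $[\mu,\mu]=\mu$, and if the centralizer $(0:\mu)$ were nonzero it would contain $\mu$ (as $\mu$ is the monolith), forcing $\mu=[\mu,\mu]\le[\mu,(0:\mu)]=0$, a contradiction; hence $(0:\mu)=0$ and in particular $\al S$ has trivial center. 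Subdirectly irreducibles of this ``semisimple at the bottom'' shape in a \emph{finitely generated} congruence modular variety are bounded in cardinality, the bound being governed by the finitely many simple sections of $\al A$ together with the way $\al S$ is forced to embed into a structure built from $\mu$; this part of the argument does not use (C1) directly.

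Now suppose $\mu$ is abelian. Then $[\mu,\mu]=0$, so $\mu\le\beta:=(0:\mu)$, and (C1) makes $\beta$ abelian as well: if $[\beta,\beta]\neq 0$ then $\mu\le[\beta,\beta]$ because $\mu$ is the monolith, and the form $x\le[y,y]\to x=[x,y]$ of (C1), applied with $x=\mu$ and $y=\beta$, gives $\mu=[\mu,\beta]=0$, a contradiction. (One needs (C1) available in $\Con(\al S)$ rather than merely in the congruence lattices of subalgebras of $\al A$; this propagation is itself part of the argument and is controlled by the same module data described next.) Commutator theory attaches to the abelian congruence $\beta$ a module $\al M$ over a finite ring $R$ in which the monolith $\mu$ corresponds to an essential simple submodule $\al M_0$, and, because $\al A$ is finite, only finitely many pairs $(R,\al M_0)$ arise up to isomorphism --- precisely those coming from abelian congruences $\alpha$ on subalgebras $\al B\le\al A$, which is the point of contact with condition (c). The crux is that (c) is exactly the commutator-theoretic translation of the statement that each of these finitely many finite modules has a finite injective hull \emph{within the subcategory of $R$-modules occurring in $\var V(\al A)$}. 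Granting this, $\al M$ is squeezed between the simple module $\al M_0$ and that finite injective hull, so the $\beta$-classes of $\al S$ have bounded size; combined with a structural analysis of subdirectly irreducibles relative to their abelian radical (again using finiteness of $\al A$ and the nonabelian case above), this bounds $|\al S|$. Taking $q$ to exceed all the bounds so obtained establishes (b).

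I expect the main obstacle to be exactly that module-theoretic translation: showing that (C1), imposed on the congruence lattices of all subalgebras of the finite algebra $\al A$, is equivalent to finiteness of the $\var V(\al A)$-injective hulls of the finitely many relevant finite modules (equivalently, to residual smallness of the associated restricted module categories). This is the technical heart of the residual-smallness analysis for congruence modular varieties, and it is the one point where finite generation of $\var V(\al A)$ is genuinely used; the surrounding reductions --- passage to subdirectly irreducibles, the abelian/nonabelian dichotomy, and the exploitation of $\beta=(0:\mu)$ --- are routine applications of the modular commutator.
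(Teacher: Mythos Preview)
The paper does not prove this theorem: it is quoted verbatim as a result from \cite[Theorem~10.15]{freese-mckenzie} and stated without proof, so there is no ``paper's own proof'' to compare your attempt against. The paper treats Theorem~\ref{thm-rs} as a black box from the literature.

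That said, your sketch is broadly faithful to the Freese--McKenzie argument: the reductions (b)$\Rightarrow$(a) and (a)$\Rightarrow$(c) are exactly as you describe, and the substantive direction (c)$\Rightarrow$(b) does proceed by bounding the size of an arbitrary subdirectly irreducible $\al S\in\var V(\al A)$, splitting on whether the monolith is abelian, and in the abelian case passing to the associated module and invoking a finiteness-of-injective-hull argument. You are also right to flag the propagation issue --- that (c) only asserts (C1) for subalgebras of $\al A$, whereas the argument needs control over $\Con(\al S)$ for $\al S$ merely in $\var V(\al A)$ --- and to identify the module-theoretic translation as the technical heart. Your treatment of the nonabelian-monolith case is somewhat hand-wavy (the phrase ``semisimple at the bottom'' and the appeal to ``simple sections of $\al A$'' would need to be made precise), but since the paper does not engage with any of this, there is nothing further to compare.
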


If $(\delta,\theta,\nu)$ is a relevant triple of an algebra $\al B$
such that (C1) holds in $\Con(\al B)$, 
then $[\theta\wedge\nu,\nu]=[\theta,\nu]\le\delta$ implies that 
$\theta\wedge[\nu,\nu]\le\delta$, and
since $\theta>\delta$ and $\delta$ is 
completely meet irreducible, it must be the case that
$[\nu,\nu]\le\delta$.
Conversely, it is not hard to show (see the first paragraph of the proof of
\cite[Theorem~10.14]{freese-mckenzie}) that if (C1) fails in $\Con(\al B)$,
then  there is a failure (with $x=\alpha$, $y=\beta$) 
of the following form:
\[
[\alpha,\beta]\le\delta<\theta\le\alpha\le[\beta,\beta]\,(\le\beta)
\]
where $\delta$ is completely meet irreducible and $\theta$ is its upper cover.
It follows that 
$[\theta,\theta]\le[\alpha,\beta]\le\delta$, so $\theta/\delta$ is abelian.
Moreover, $\nu:=(\delta:\theta)\ge\beta$ since 
$[\theta,\beta]\le[\alpha,\beta]\le\delta$, which implies that
$[\nu,\nu]\ge[\beta,\beta]\ge\theta$. 
Consequently, $(\delta,\theta,\nu)$ is a relevant triple such that 
$[\nu,\nu]\not\le\delta$.
This shows that conditions
(a)--(c) in Theorem~\ref{thm-rs} are also equivalent to the condition
\begin{enumerate}
\item[(d)]
$[\nu,\nu]\le\delta$
for every relevant triple $(\delta,\theta,\nu)$ of a subalgebra of $\al A$.
\end{enumerate} 

\begin{cor}
\label{cor-ssc-rs}
Let $\al A$ be a finite algebra that generates a congruence modular 
variety $\var{V}(\al A)$.
If $\al A$ satisfies the split centralizer condition, 
then {\rm(C1)} holds in 
the congruence lattice of every subalgebra of $\al A$, so
$\var{V}(\al A)$ is residually small.
\end{cor}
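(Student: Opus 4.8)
The statement to prove is Corollary~\ref{cor-ssc-rs}: if $\al A$ is a finite algebra generating a congruence modular variety and $\al A$ satisfies the split centralizer condition, then (C1) holds in the congruence lattice of every subalgebra of $\al A$, hence $\var{V}(\al A)$ is residually small.

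The plan is to invoke the characterization established in the paragraph preceding the corollary, which adds condition (d) to the list of equivalents in Theorem~\ref{thm-rs}: namely, (C1) holds in the congruence lattice of every subalgebra of $\al A$ if and only if $[\nu,\nu]\le\delta$ for every relevant triple $(\delta,\theta,\nu)$ of a subalgebra of $\al A$. So it suffices to verify that the split centralizer condition forces $[\nu,\nu]\le\delta$ for every relevant triple of every subalgebra $\al B\le\al A$.

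Fix a subalgebra $\al B\le\al A$ and a relevant triple $(\delta,\theta,\nu)$ of $\al B$. By the split centralizer condition there is a triple $(\alpha,\beta,\kappa)$ of congruences of $\al B$ splitting it, so by properties (ii)--(v): $\beta\le\delta$, $\alpha\wedge\beta=\kappa$, $\alpha\vee\beta=\nu$, and $[\alpha,\alpha]\le\kappa$. The key computation, which uses congruence modularity (so that the commutator is well-behaved, in particular monotone and additive in each variable), is:
\[
[\nu,\nu]=[\alpha\vee\beta,\alpha\vee\beta]
=[\alpha,\alpha]\vee[\alpha,\beta]\vee[\beta,\alpha]\vee[\beta,\beta].
\]
Now $[\alpha,\alpha]\le\kappa=\alpha\wedge\beta\le\beta\le\delta$; the two ``mixed'' terms satisfy $[\alpha,\beta],[\beta,\alpha]\le\alpha\wedge\beta=\kappa\le\delta$ by monotonicity of the commutator (since $[\alpha,\beta]\le\alpha\wedge\beta$ always holds); and $[\beta,\beta]\le\beta\le\delta$. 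Joining these four inequalities gives $[\nu,\nu]\le\delta$, as desired. This holds for every relevant triple of every subalgebra, so condition (d) is met; by the equivalence with (c) in (the extended) Theorem~\ref{thm-rs}, (C1) holds in the congruence lattice of every subalgebra of $\al A$, and then by the equivalence of (c) with (a) in Theorem~\ref{thm-rs}, $\var{V}(\al A)$ is residually small.

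There is essentially no serious obstacle here: the corollary is a direct bookkeeping consequence of the commutator arithmetic in congruence modular varieties together with the already-established equivalence of (a)--(d). The only point requiring a modicum of care is that the splitting triple $(\alpha,\beta,\kappa)$ is guaranteed to exist only because $(\delta,\theta,\nu)$ is relevant — in particular $\theta/\delta$ is abelian — but relevance is built into the statement of condition (d) and the split centralizer condition quantifies over exactly the relevant triples, so this matches up without friction. One should also note that the additivity identity $[\gamma_1\vee\gamma_2,\eta]=[\gamma_1,\eta]\vee[\gamma_2,\eta]$ is a standard property of the modular commutator (see \cite{freese-mckenzie}), which we are entitled to use under the hypothesis that $\var{V}(\al A)$ is congruence modular.
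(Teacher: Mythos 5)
Your proof is correct and reaches the same conclusion $[\nu,\nu]\le\delta$ (indeed $[\nu,\nu]\le\beta$), but by a slightly different route than the paper. The paper argues lattice-theoretically: from (ii)--(v) one reads off $\interval{\delta}{\nu}\subseteq\interval{\beta}{\nu}\searrow\interval{\kappa}{\alpha}$, the last interval is abelian because $[\alpha,\alpha]\le\kappa$, and abelianness of a prime-quotient interval is preserved under down-transposition (and under passage to subintervals) in a congruence modular variety. You instead unpack the same content at the level of commutator arithmetic, expanding $[\nu,\nu]=[\alpha\vee\beta,\alpha\vee\beta]$ by join-additivity and bounding each of the four joinands by $\delta$ using $[\alpha,\alpha]\le\kappa=\alpha\wedge\beta\le\beta\le\delta$, $[\alpha,\beta]\le\alpha\wedge\beta$, and $[\beta,\beta]\le\beta\le\delta$. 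The two arguments are essentially equivalent in depth; the paper's is a one-line appeal to the perspectivity fact, yours is a one-line appeal to the additivity and monotonicity identities that underlie that fact. Both correctly reduce to the equivalence of condition (d) with Theorem~\ref{thm-rs}(b),(c), established in the paragraph preceding the corollary.
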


\begin{proof}
If $\al A$ satisfies the split centralizer condition, and 
$(\delta,\theta,\nu)$ is a relevant triple of a subalgebra $\al B$ of $\al A$
which is split by the triple $(\alpha,\beta,\kappa)$, then
$\interval{\delta}{\nu}\subseteq\interval{\beta}{\nu}
\searrow\interval{\kappa}{\alpha}$ with the last interval abelian, 
therefore the other two intervals are also abelian.
This proves that condition (d) holds for $\al A$.
Our statement now follows from the equivalence of (d) to conditions
(c) and (b) in Theorem~\ref{thm-rs}.
\end{proof}

\subsection{Compatible Relations, Entailment, and Dualizability}

Let $A$ be a set. By a \emph{relation} on $A$ we mean a subset $\rho$ of $A^n$
for some positive integer $n$, which we call the \emph{arity} of $\rho$.
For any nonempty subset $I$ of $[n]$, 
\emph{projection onto the coordinates in $I$} is the map
$\proj_I\colon A^n\to A^I$, $(a_i)_{i\in[n]}\mapsto(a_i)_{i\in I}$. 

For any algebra $\al A$, an $n$-ary relation $\rho$ on $A$ is called
a \emph{compatible relation of $\al A$} if $\rho$ is the universe 
of a subalgebra of $\al A^n$
(or equivalently, $\rho$ is a nonempty subuniverse of $\al A^n$). 
The set of all compatible relations of
$\al A$ of arity $\le n$ will be denoted by $\rel{R}_n(\al A)$, 
and $\rel{R}(\al A)$
will stand for the set $\bigcup_{n>0}\rel{R}_n(\al A)$
of all compatible relations of $\al A$.

It is straightforward to check that if $R$ is a set of compatible relations
of an algebra $\al A$, then so is every relation that can be obtained,
in finitely many steps, from
relations in $R\cup\{{=}\}$ by the following constructs:
\begin{itemize}
\item
nonempty 
intersection of relations of the same arity,
\item
direct product of two relations,
\item
permutation of coordinates of a relation, and
\item
projection of a relation onto a nonempty subset of its coordinates.
\end{itemize}
The relations that can be obtained in this way are exactly the 
nonempty 
relations that are definable by primitive positive formulas (with $=$) 
using the relations in $R$.
The relations that can be obtained, in finitely many steps, from
relations in $R\cup\{{=}\}$ by the first three types of constructs
are exactly the 
nonempty
relations that are definable by 
quantifier free primitive positive formulas (with $=$) 
using the relations in $R$.

A \emph{critical relation} of an algebra $\al A$ is a compatible relation
$\rho$ of $\al A$ that is 
\begin{enumerate}
\item[(i)] 
completely $\cap$-irreducible in 
the 
lattice of subuniverses
of $\al A^n$ where $n$ is the arity of $\rho$, and
\item[(ii)]
directly indecomposable as a relation, that is, $[n]$ cannot be partitioned 
into two nonempty sets $I$ and $J$ such that $\rho$ and 
$\proj_I(\rho)\times\proj_J(\rho)$ differ only by a permutation of 
coordinates.
\end{enumerate}
More informally, a compatible relation of $\al A$ is critical if it cannot be
obtained in a nontrivial way from other compatible relations 
using only the first three of the four types of constructs above.

In the theory of natural dualities there is an entailment concept, 
which we will denote by $\dentails$ ($d$ stands for 
`$\underline{\textrm{d}}$uality').
We refer the reader interested in the definition to Definition~4.1
in \cite[Chapter~2]{clark-davey}.
For the purposes of this paper an algebraic characterization of $\dentails$,
which we state in Theorem~\ref{thm-dentails} below, will be sufficient.
We need some definitions beforehand.

Let $B$ be a compatible relation of an algebra $\al A$, say $B$ is $n$-ary,
and let $B':=\proj_I(B)$ for some nonempty $I\subseteq[n]$.
Then the projection map $\proj_I\colon\al B\to\al B'$ is a surjective
homomorphism
between the algebras $\al B\,(\le\al A^n)$ and $
\al B'\,(\le\al A^I)$ with universes $B$ and $B'$.
Following \cite{clark-davey},
we call $\proj_I\colon\al B\to\al B'$ a \emph{retractive projection}
if it is a retraction, that is, if there exists a homomorphism
$\phi\colon\al B'\to\al B$ such that $\proj_I\circ\phi=\id_{B'}$. 
An important special case is a \emph{bijective projection}
$\proj_I\colon\al B\to\al B'$, when the retractive projection
is bijective.
Accordingly, we say that $B'$ is obtained from $B$ by
retractive projection (or bijective projection) onto its coordinates in $I$.

From now on we will restrict to finite algebras.
The following theorem is a consequence of Theorems~2.2 and 2.6
in \cite[Chapter~9]{clark-davey}.

\begin{thm}[{}From \cite{clark-davey}]
\label{thm-dentails}
Let $\al A$ be a finite algebra, and let $R\subseteq\rel{R}(\al A)$.
For a 
nonempty
relation $\rho$ on $A$, we have
$R\dentails\rho$ if and only if
$\rho$ can be obtained, in finitely many steps, from
relations in $R\cup\{{=}\}$ by the following constructs:
\begin{itemize}
\item
nonempty
intersection of relations of the same arity,
\item
product of two relations,
\item
permutation of coordinates of a relation, and
\item
retractive projection of a relation onto a nonempty subset of its coordinates.
\end{itemize}
\end{thm}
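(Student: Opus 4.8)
The statement is not new: as noted just before it, the plan is simply to assemble \cite[Chapter~9, Theorems~2.2 and~2.6]{clark-davey} and to translate them into the notation of this paper, so the argument is largely bookkeeping. First I would unwind the definition of $\dentails$: by \cite[Chapter~2, Definition~4.1]{clark-davey}, $R\dentails\rho$ says that $\rho$ lies in the closure of $R$ under a certain fixed list of \emph{entailment constructs}, and the content of the Clark--Davey theory is that, for a finite algebra $\al A$, this syntactic closure coincides with the semantic notion of entailment for dualities (roughly, $\rho$ may be adjoined to or removed from any alter ego of $\al A$ based on $R$ without affecting whether it dualises $\al A$). The task then becomes: starting from $R\cup\{{=}\}$, show that the Clark--Davey closure operator and closure under the four constructs in the statement produce the same family of nonempty relations on $A$.

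For the soundness inclusion (closure under the four constructs is contained in the $\dentails$-closure), I would check that each of the four constructs is, or is an iterate of, a Clark--Davey entailment construct. For nonempty intersection of equal-arity relations, binary product, and coordinate permutation this is immediate. Retractive projection is the delicate one: an arbitrary projection is \emph{not} sound for $\dentails$ (which is precisely why projection is dropped in passing from primitive positive definability to $\dentails$), whereas a retractive projection is sound, and this is part of what \cite[Chapter~9, Theorem~2.6]{clark-davey} provides. Combining these facts, closure under the four constructs lands inside the $\dentails$-closure.

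For the completeness inclusion I would invoke \cite[Chapter~9, Theorems~2.2 and~2.6]{clark-davey}, which together characterize the $\dentails$-closure over a finite algebra by an explicit list of constructs, and then show that, once the equality relation is at hand, every construct on that list is realized by the four constructs of the statement. Concretely: the full unary relation $A$ is the bijective, hence retractive, projection of $\{{=}\}$ onto one coordinate; adjoining a fictitious coordinate to a relation is a product with $A$; deleting a fictitious coordinate is a bijective projection; identifying two coordinates of a relation $\rho$ is the nonempty intersection of $\rho$ with a suitable permuted product of $\{{=}\}$ and copies of $A$; finite direct products reduce to iterated binary products; and any projection that appears on the Clark--Davey list is, by \cite[Chapter~9, Theorem~2.6]{clark-davey}, retractive. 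Iterating these reductions shows the $\dentails$-closure is contained in the closure under the four constructs, and together with the soundness step this yields the claimed equivalence.

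The main obstacle I anticipate is not mathematical: it is getting the dictionary between the two formalisms exactly right. One has to verify that ``retractive projection'' in the sense used here (the existence of a homomorphic section $\phi\colon\al B'\to\al B$ with $\proj_I\circ\phi=\id_{B'}$) coincides with the notion Clark and Davey use in their entailment constructs, and that the precise list of constructs in \cite[Chapter~9, Theorem~2.2]{clark-davey} is, construct by construct and modulo the presence of $\{{=}\}$, equivalent to the streamlined four-item list used throughout this paper. Once that translation is pinned down, nothing further is required.
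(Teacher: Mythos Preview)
Your proposal is correct and matches the paper's approach: the paper gives no proof of its own and simply records the statement as a consequence of \cite[Chapter~9, Theorems~2.2 and~2.6]{clark-davey}, which is exactly what you do (with a more detailed sketch of the translation than the paper bothers to supply).
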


The four types of constructs in this theorem will be referred to as 
\emph{$\dentails$-constructs}. 
Notice that the only difference between the list
of $\dentails$-constructs and the earlier list of 
constructs for compatible relations is that among the $\dentails$-constructs
projections are restricted to retractive projections.
Two immediate consequences are worth mentioning: 
(i)~Since every $\dentails$-construct occurs on the earlier list, it follows
that if $R\subseteq\rel{R}(\al A)$ and
$R\dentails\rho$, then $\rho\in\rel{R}(\al A)$.
(ii)~Since the first three constructs on the two lists are the same,
our earlier remark implies that if $R\subseteq\rel{R}(\al A)$
and $\rho$ 
is a nonempty relation on $A$ that 
is definable by a quantifier-free primitive positive formula
(with ${=}$) using relations in $R$, then $R\dentails\rho$.

Observation (i) above allows us to extend $\dentails$ to subsets of
$\rel{R}(\al A)$ as follows: for $R,R'\subseteq\rel{R}(\al A)$
let $R\dentails R'$ mean that $R\dentails\rho$ for every $\rho\in R'$.
It is easy to check that the relation $\dentails$ is transitive on subsets
of $\rel{R}(\al A)$, that is, if $R,R',R''\subseteq\rel{R}(\al A)$
satisfy $R\dentails R'$ and $R'\dentails R''$, then $R\dentails R''$.

The most powerful general criterion for dualizability is the following theorem
of Willard and Z\'adori.

\begin{thm}[See \cite{willard},\cite{zadori}]
\label{thm-WZ}
Let $\al A$ be a finite algebra.
If $\rel{R}_n(\al A)\dentails\rel{R}(\al A)$
for some integer $n\ge1$, then $\al A$ is dualizable.
\end{thm}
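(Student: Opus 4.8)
The plan is to exhibit a concrete finite alter ego for $\al A$ and to verify, by means of the algebraic description of $\dentails$ in Theorem~\ref{thm-dentails}, that it yields a duality. Since $\al A$ is finite, each power $\al A^k$ has only finitely many subuniverses, so $\rel R_n(\al A)$ is a finite set of relations; let $\al M:=\langle A;\rel R_n(\al A),\mathcal T\rangle$ be the relational structure on $A$ carrying exactly these relations together with the discrete topology $\mathcal T$. Write $\mathcal A:=\Su\Pd(\al A)$ and, for $\al B\in\mathcal A$, $\mathcal E(\al B):=\Hom(\al B,\al A)$, regarded as a substructure of $\al M^B$, and $\mathcal D\mathcal E(\al B):=\Hom(\mathcal E(\al B),\al M)$. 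I would prove that the evaluation homomorphism $e_{\al B}\colon\al B\to\mathcal D\mathcal E(\al B)$, $b\mapsto(f\mapsto f(b))$, is an isomorphism for every $\al B\in\mathcal A$; this is exactly the statement that $\al M$ yields a duality, and so proves $\al A$ dualizable. Since every algebra in $\mathcal A$ embeds into a power of $\al A$, the homomorphisms in $\mathcal E(\al B)$ separate the points of $\al B$, so $e_{\al B}$ is injective, hence an embedding of algebras, and only surjectivity remains. By the Duality Compactness Theorem (\cite{willard},\cite{clark-davey}), which applies because $\al M$ is finite, it suffices to prove $e_{\al B}$ surjective for \emph{finite} $\al B$.

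For finite $\al B$, a morphism $\alpha\in\mathcal D\mathcal E(\al B)$ is nothing but a map $\alpha\colon D\to A$, where $D:=\Hom(\al B,\al A)$, that preserves each relation of $\rel R_n(\al A)$ after that relation is lifted pointwise over $B$ (continuity is automatic here). The crux of the argument is to show that such an $\alpha$ in fact preserves \emph{every} compatible relation $\rho\in\rel R(\al A)$. For this I would combine the hypothesis $\rel R_n(\al A)\dentails\rho$ with Theorem~\ref{thm-dentails}: $\rho$ is obtained from $\rel R_n(\al A)\cup\{{=}\}$ by finitely many $\dentails$-constructs, so it is enough to check that the family of relations preserved by $\alpha$ contains $\rel R_n(\al A)\cup\{{=}\}$ and is closed under the four $\dentails$-constructs. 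Closure under nonempty intersection, direct product and coordinate permutation, and the case of $=$, are routine; the substantive case, and the one I expect to be the main obstacle, is closure under \emph{retractive} projection.

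That step would run as follows. Suppose $\alpha$ preserves an $n$-ary $\rho\in\rel R(\al A)$ and $\rho':=\proj_I(\rho)$ is obtained by retractive projection, witnessed by a homomorphism $\phi\colon\rho'\to\rho$ between the corresponding subalgebras ($\rho'\le\al A^I$, $\rho\le\al A^n$) with $\proj_I\circ\phi=\id_{\rho'}$. Given $(y_i)_{i\in I}\in D^I$ with $(y_i(b))_{i\in I}\in\rho'$ for all $b\in B$, the assignment $b\mapsto(y_i(b))_{i\in I}$ is a homomorphism $\al B\to\rho'$, so composing with $\phi$ yields a homomorphism $\al B\to\rho\le\al A^n$ whose coordinate maps $z_1,\dots,z_n$ therefore lie in $D$, satisfy $(z_1(b),\dots,z_n(b))\in\rho$ for every $b\in B$, and restrict to $z_i=y_i$ for $i\in I$ (because $\proj_I\circ\phi=\id_{\rho'}$). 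Since $\alpha$ preserves $\rho$ we obtain $(\alpha(z_1),\dots,\alpha(z_n))\in\rho$, hence $(\alpha(y_i))_{i\in I}=\proj_I\bigl((\alpha(z_j))_{j\in[n]}\bigr)\in\rho'$, as required. The essential feature is that one must lift a tuple of \emph{homomorphisms}, not merely a tuple of elements of $A$; the coretraction $\phi$ is precisely what supplies this lift, which is why the list of $\dentails$-constructs admits only retractive projections — for an arbitrary projection the argument breaks down.

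To finish, I would apply the above claim to the ``graph relation'' $\sigma_{\al B}:=\{(f(b))_{f\in D}:b\in B\}$, a nonempty subuniverse of $\al A^D$ and hence a compatible relation of $\al A$ of arity $|D|$. The diagonal tuple $(f)_{f\in D}\in D^D$ lies in the pointwise-over-$B$ lift of $\sigma_{\al B}$, since for each $b\in B$ one has $(f(b))_{f\in D}\in\sigma_{\al B}$; therefore $\alpha$ maps it into $\sigma_{\al B}$, i.e.\ there is $b_0\in B$ with $\alpha(f)=f(b_0)$ for all $f\in D$, that is, $\alpha=e_{\al B}(b_0)$. This proves $e_{\al B}$ surjective for finite $\al B$, so by the first paragraph $\al M$ yields a duality and $\al A$ is dualizable. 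Morally the argument says that the structure of \emph{all} compatible relations of $\al A$ yields a duality, the hypothesis $\rel R_n(\al A)\dentails\rel R(\al A)$ being what allows one to replace that infinite structure by the finite $\rel R_n(\al A)$; besides the retractive-projection closure, the only other point requiring genuine care is the invocation of duality compactness for infinite algebras.
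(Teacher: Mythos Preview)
The paper does not prove Theorem~\ref{thm-WZ}; it states it as a known result, with citations to \cite{willard} and \cite{zadori}, and then uses it as a black box in the proof of Theorem~\ref{thm-main}. So there is no ``paper's own proof'' to compare against.

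Your argument is correct and is essentially the standard proof of this result as it appears in \cite{zadori} and in \cite[Chapter~2]{clark-davey}. The three ingredients you identify are exactly the right ones: (i)~the Duality Compactness Theorem reduces the problem to finite $\al B$; (ii)~for finite $\al B$, the family of compatible relations preserved by a morphism $\alpha\in\mathcal D\mathcal E(\al B)$ is closed under the $\dentails$-constructs, the only nontrivial case being retractive projection, and your handling of that case is correct (the coretraction is what turns a $\rho'$-valued homomorphism into a $\rho$-valued one with the right $I$-coordinates); (iii)~applying this to the graph relation $\sigma_{\al B}$ forces $\alpha$ to be evaluation at some $b_0$. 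One small remark: your appeal to \cite{willard} for the Duality Compactness Theorem is slightly imprecise bibliographically (in \cite{clark-davey} the theorem is attributed to Willard and Z\'adori independently), but the mathematics is right.
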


In fact, it is shown in \cite{zadori} that
if $\rel{R}_n(\al A)\dentails\rel{R}(\al A)$
holds for some $n\ge1$, then 
it also holds with the restriction that
retractive projections among the $\dentails$-constructs are limited to
bijective projections.

\subsection{Algebras with Parallelogram Terms}

Let $m$ and $n$ be positive integers
and let $k= m+n$. The concept of an
{\em $(m,n)$-parallelogram term} 
(or {\em $k$-parallelogram term})
for a variety $\var{V}$ was introduced in
\cite{parallelogram} to mean 
a term 
${\sf P}$
such that the identities represented by the rows of the following 
matrix equation hold in $\var{V}$:
\begin{equation}\label{p}
{\sf P}
\left(
\begin{array}{ccc|}
x&x&y\\
x&x&y\\
&\vdots&\\
x&x&y\\
\hline
y&x&x\\
&\vdots&\\
y&x&x\\
y&x&x\\
\end{array}
\begin{array}{cccccccc}
z&y&\cdots&y&y&\cdots&y&y\\
y&z&&y&y&&y&y\\
\vdots&&\ddots& &&&&\vdots\\
y&y&&z&y&&y&y\\
y&y&&y&z&&y&y\\
\vdots&&&&&\ddots&&\vdots\\
y&y&&y&y&&z&y\\
y&y&\cdots&y&y&\cdots&y&z\\
\end{array}
\right) = 
\left(\begin{matrix}
y\\
y\\
\vdots\\
y\\
y\\
\vdots\\
y\\
y
\end{matrix}
\right).
\end{equation}
Here $\sf P$ is $(k+3)$-ary, the rightmost block
of variables is a $k\times k$ array, the upper left
block is $m\times 3$ and the lower left block is $n\times 3$.
An $(m,n)$-parallelogram term (or $k$-parallelogram term) for an algebra
$\al A$ is defined to be 
an $(m,n)$-parallelogram term (or $k$-parallelogram term)
for the variety $\var{V}(\al A)$ it generates.

It is easy to see from these definitions that a $k$-parallelogram term
that is independent of its last $k$ variables is a Maltsev term, and
a $k$-parallelogram term that is independent of its first $3$ variables 
is a $k$-ary near unanimity term.

It was proved in \cite[Theorem~3.5]{parallelogram} that
if $m,n,m',n'$ are positive integers such that $m+n=k=m'+n'$, then a variety
has an $(m,n)$-parallelogram term if and only if it has an 
$(m',n')$-parallelogram term; this justifies referring to them as 
$k$-parallelogram terms. In addition, \cite[Theorem~3.5]{parallelogram}
also shows that a variety has a $k$-parallelogram term if and only if it has
a term, called a $k$-cube term, introduced in \cite[Definition~2.4]{bimmvw}.
It follows from \cite[Theorem~2.7]{bimmvw} 
that every variety with
a $k$-cube term is congruence modular. 
So, by combining all these results, or by a direct argument using
\cite[Theorem~3.2]{dent-kearnes-szendrei} we get the following conclusion.

\begin{thm}[See \cite{bimmvw},\cite{parallelogram};\cite{dent-kearnes-szendrei}]
\label{thm-parterm-cm}
Every variety with a parallelogram term is congruence modular.
\end{thm}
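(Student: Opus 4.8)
The plan is to reduce the statement to the known fact that varieties with a cube term are congruence modular, exploiting results already recalled in the excerpt. First I would note that, by definition, a variety $\var V$ with a parallelogram term has an $(m,n)$-parallelogram term, hence a $k$-parallelogram term with $k=m+n$. By \cite[Theorem~3.5]{parallelogram} this is equivalent to $\var V$ having a $k$-cube term in the sense of \cite[Definition~2.4]{bimmvw}. Then I would invoke \cite[Theorem~2.7]{bimmvw}, which asserts that every variety with a $k$-cube term is congruence modular. Composing the two implications yields the theorem, so the body of the proof is essentially a two-line citation chain together with the observation that ``having a parallelogram term'' means ``having a $k$-parallelogram term for some $k$''.

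If instead one wants an argument that does not route through \cite{bimmvw}, the alternative I would pursue is to apply \cite[Theorem~3.2]{dent-kearnes-szendrei} as a black box: it reduces congruence modularity of $\var V$ to checking that a single term operation of $\var V$ satisfies a prescribed finite system of identities of fixed combinatorial shape, and one then verifies that a parallelogram term ${\sf P}$, with its first three and last $k$ arguments specialised according to the column pattern in \eqref{p}, satisfies that system. The shape of the direct argument is to build a Day (or Gumm) sequence $m_0,\dots,m_t$ of terms by letting each $m_j(x,y,z,w)$ be ${\sf P}$ evaluated at a suitable $(k+3)$-tuple assembled from $x,y,z,w$: the upper ``$x\,x\,y$'' rows and lower ``$y\,x\,x$'' rows of the left block of \eqref{p} are used to move between the endpoints $m_0(x,y,z,w)=x$ and $m_t(x,y,z,w)=w$, while the $k$ right-hand columns, each carrying a single $z$ on the diagonal, supply the intermediate terms and, since every row of \eqref{p} evaluates to $y$, force the ``$m_j(x,y,y,x)=x$'' identities and the alternating even/odd matching identities of Day's condition.

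The main obstacle in either route is purely bookkeeping. In the direct approach one must choose the specialisations of the $k+3$ coordinates of ${\sf P}$ so that all the endpoint identities, the idempotence-type identities, and the two families of meet/join matching identities hold simultaneously, with the length $t$ of the sequence growing with $k$; this index-chasing is exactly what is packaged by \cite[Theorem~3.2]{dent-kearnes-szendrei}. In the citation route the analogous combinatorics is hidden inside the parallelogram-term/cube-term equivalence of \cite{parallelogram} and inside \cite[Theorem~2.7]{bimmvw}. For the write-up I would take the citation route and keep the proof to a couple of lines, mentioning the direct construction via \cite{dent-kearnes-szendrei} only as an alternative.
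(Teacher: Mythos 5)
Your citation chain — parallelogram term $\Leftrightarrow$ cube term by \cite[Theorem~3.5]{parallelogram}, then cube term $\Rightarrow$ congruence modular by \cite[Theorem~2.7]{bimmvw} — is exactly how the paper justifies the theorem, and you even flag the same alternative direct route via \cite[Theorem~3.2]{dent-kearnes-szendrei} that the paper mentions in passing. Your write-up plan (take the citation route, keep it to two lines) matches the paper's treatment.
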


Our starting point for the proof of Theorem~\ref{thm-main}
will be the structure theorem in \cite{parallelogram}
(see \cite[Theorems~4.1 and 2.5]{parallelogram})
on the critical relations
of a finite algebra with a parallelogram term. 
If $C$ is an $n$-ary critical relation of $\al A$, let
$\al C$ denote the subalgebra of $\al A^n$ with universe
$C$, and let $\al A_i:=\proj_i(\al C)$ for each $i\in[n]$. 
Furthermore, let $\delta_i\in\Con(\al A_i)$ ($i\in[n]$)
be such that 
$\delta:=\prod_{i=1}^n\delta_i$
is the largest product congruence of 
$\prod_{i=1}^n\al A_i$ with the property that $\al C$ is a $\delta$-saturated
subalgebra of $\prod_{i=1}^n\al A_i$.
Then $\bar{\al C}:=\al C/\delta_{\al C}$ is a subdirect product
of the algebras $\bar{\al A}_i:=\al A_i/\delta_i$ $(i\in[n])$, which we call
\emph{the reduced representation of $\al C$}.

The next theorem contains
those parts of the structure theorem on critical relations
that we will need later on; we
retain the numbering from \cite[Theorem~2.5]{parallelogram}.

\begin{thm}[From \cite{parallelogram}]
\label{thm-paralg}
Let $C$ be an $n$-ary critical relation of a finite algebra
$\al A$ with a $k$-parallelogram term,
and let $\bar{\al C}$ be its reduced representation. 
If $n\ge k\,(>1)$, then the following hold.
\begin{enumerate}
\item[{\rm(1)}]
$\bar{\al C}\le\prod_{i=1}^n\bar{\al A}_i$ is a representation of
$\bar{\al C}$ as a subdirect product of subdirectly irreducible
algebras $\bar{\al A}_i$.
\item[{\rm(7)}]
If $n>2$, then each $\bar{\al A}_i$ has abelian monolith $\mu_i$
$(i\in[n])$.
\item[{\rm(8)}]
For the centralizers $\rho_\ell:=(0:\mu_\ell)$ of the monoliths $\mu_\ell$
$(\ell\in[n])$, the image of the composite map
\[
\bar{\al C}\hookrightarrow\prod_{\ell=1}^n\bar{\al A}_\ell
\twoheadrightarrow\bar{\al A}_i/\rho_i\times\bar{\al A}_j/\rho_j
\]
is the graph of an isomorphism 
for any $i,j\in[n]$.
\end{enumerate}
\end{thm}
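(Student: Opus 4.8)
The plan is to read off items~(1), (7) and~(8) from the structure of the unique upper cover of $C$ together with the commutator-theoretic content of the parallelogram-term identities~\eqref{p}. First the reduction. Since $C$ is critical it is completely $\cap$-irreducible in the subuniverse lattice of $\al A^n$, so it has a unique upper cover $C^+$, which is generated by $C$ together with a single tuple $\wec a\notin C$, and every subuniverse of $\al A^n$ properly containing $C$ contains $C^+$. Next pass to the reduced representation: the $\delta$-saturated subuniverses of $\prod_i\al A_i$ are exactly the preimages of the subuniverses of $\prod_i\bar{\al A}_i$, so $\bar C=C/\delta_{\al C}$ inherits both complete $\cap$-irreducibility (now in the subuniverse lattice of $\prod_i\bar{\al A}_i$) and direct indecomposability from $C$, while the largest product congruence saturating $\bar C$ is $0$. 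Each $\bar{\al A}_i$ is a section of $\al A$, so $\prod_i\bar{\al A}_i\in\var V(\al A)$ is congruence modular by Theorem~\ref{thm-parterm-cm}, letting me use the modular commutator on it freely, and, as recalled in the introduction, $\Su\Pd(\bar{\al A}_i)$---hence $\bar{\al A}_i$ itself---has tame-congruence-theoretic typeset contained in $\{\atyp,\btyp,\ltyp\}$. Finally every $\bar{\al A}_i$ is nontrivial, since otherwise coordinate~$i$ would split off as a direct factor of $\bar C$, contradicting direct indecomposability; here $n\ge2$ is used.

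For~(1), subdirectness of $\bar C\le\prod_i\bar{\al A}_i$ is immediate from $\al A_i=\proj_i(\al C)$. To see each $\bar{\al A}_i$ is subdirectly irreducible, I transport the cover $\bar C^+$ of $\bar C$ into this setting and attach to each coordinate~$i$ the congruence of $\bar{\al A}_i$ that records how $\bar C^+$ extends $\bar C$ in coordinate~$i$; a computation with the parallelogram term shows $\bar C^+$ is recovered from $\bar C$ together with these congruences. Uniqueness of the cover, combined with the maximality built into $\delta=0$, then forces each $\bar{\al A}_i$ to have a least nonzero congruence, which is its monolith $\mu_i$. This is the least technical of the three items: it is essentially the general theory of reduced representations of critical relations, specialised using the term.

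The technical core is items~(7) and~(8), describing the commutator skeleton of $\bar C$; in the source these rest on a chain of intermediate facts (the elided items~(2)--(6)). For~(7), suppose $n>2$ but some $\mu_i$ is nonabelian. Since the typeset of $\bar{\al A}_i$ lies in $\{\atyp,\btyp,\ltyp\}$, the type of $\mu_i$ must then be $\btyp$ or $\ltyp$, so a trace of $\mu_i$ supports a compatible majority operation; combining this with~\eqref{p} yields enough compatible operations to produce either a direct decomposition of $\bar C$ or a nontrivial product congruence saturating $\bar C$, contradicting respectively direct indecomposability or $\delta=0$. (This is where $n>2$ is needed, so that the decomposition so produced is genuinely nontrivial.) Hence every $\mu_i$ is abelian. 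For~(8), put $\rho_\ell=(0:\mu_\ell)$; since $\mu_\ell$ is abelian, its classes carry a compatible affine module structure. One shows that, modulo $(\rho_i,\rho_j)$, the projection of $\bar C$ to coordinates $\{i,j\}$ is the graph of a bijection $\bar{\al A}_i/\rho_i\to\bar{\al A}_j/\rho_j$: surjectivity is subdirectness, and injectivity is a commutator computation using $[\mu_\ell,\rho_\ell]=0$ (together with the intermediate structural facts); compatibility of $\bar C$ then upgrades the bijection to an isomorphism, and a connectivity argument over the index set $[n]$ passes from one pair $\{i,j\}$ to all pairs.

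I expect the main obstacle to be item~(7). Item~(1) is largely order-theoretic and item~(8) is a commutator computation once~(7) is available, but~(7) requires converting purely local majority behaviour on a nonabelian trace into a global collapse of the $n$-ary relation --- which is exactly what the combinatorial strength of~\eqref{p} (equivalently, of a $k$-cube term) is for. The delicate points are the bookkeeping in that substitution (which columns of~\eqref{p} receive trace elements, and why the resulting operations produce a nontrivial decomposition) and verifying that this genuinely requires, and uses, the hypothesis $n>2$.
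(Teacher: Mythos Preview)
This theorem is not proved in the present paper at all: it is quoted verbatim (with the original numbering) from \cite{parallelogram}, as the sentence introducing it makes explicit, and the paper uses it as a black box to derive Corollary~\ref{cor-paralg}. There is therefore no proof here to compare your proposal against.

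That said, a few remarks on your sketch as a standalone plan. Your reduction step and the argument for~(1) are broadly on the right track and match the shape of the argument in \cite{parallelogram}. For~(7), however, your proposed mechanism---invoking tame congruence theory to get a majority operation on a trace of a nonabelian monolith and then feeding this into the parallelogram identities---is not how the source proceeds, and it is not clear it can be made to work: the parallelogram term is a \emph{global} term operation, while a trace-level majority is only a local polynomial behaviour, and there is no obvious way to substitute local polynomial data into the columns of~\eqref{p}. The actual proof of~(7) in \cite{parallelogram} does not use tame congruence theory; it works directly with the commutator and the combinatorics of the cover $C^+$ of $C$, showing that a nonabelian monolith would force a forbidden product decomposition. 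Your sketch of~(8) is closer in spirit to the source, but note that it relies on the intermediate items~(2)--(6) you elided, and in particular on knowing that the projection of $\bar{\al C}$ onto any proper subset of coordinates is the full product---this is where the hypothesis $n\ge k$ and the parallelogram term do their real work, and it is not something you can recover from~(1) and~(7) alone.
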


Since $\bar{\al A}_i=\al A_i/\delta_i$ $(i\in[n])$ is subdirectly irreducible,
$\delta_i\in\Con(\al A_i)$ is completely meet irreducible,
and the monolith of $\bar{\al A}_i$ is
$\mu_i=\theta_i/\delta_i$ where $\theta_i\in\Con(\al A_i)$
is the unique cover of $\delta_i$.
Furthermore, $(0:\mu_i)=\rho_i=\nu_i/\delta_i$ where
$\nu_i=(\delta_i:\theta_i)$, and hence $\bar{\al A}_i/\rho_i\cong\al A_i/\nu_i$.
Thus, with our current terminology of a relevant triple, 
Theorem~\ref{thm-paralg} (restricted to the case $n>2$)
can be restated as follows.

\begin{cor}
\label{cor-paralg}
Let $C$ be an $n$-ary critical relation of a finite algebra
$\al A$ with a $k$-parallelogram term,
and let $\al C$ be the subalgebra of $\al A^n$ with universe $C$.
If $n\ge\max(3,k)$, then the following hold.
\begin{itemize}
\item
$\al C$ is a subdirect product of the subalgebras
$\al A_i:=\proj_i(\al C)$ $(i\in[n])$ of $\al A$.
\item
If $\delta=\prod_{i=1}^n\delta_i$ $\bigl(\delta_i\in\Con(\al A_i)\bigr)$ 
is the largest product congruence 
of $\prod_{i=1}^n\al A_i$ for which $\al C$ is $\delta$-saturated, 
then each $\delta_i$ $(i\in[n])$ is the first component
of a relevant triple $(\delta_i,\theta_i,\nu_i)$ of $\al A_i$.
\item
The assignment
\[
\iota_{ij}\colon\al A_i/\nu_i\to\al A_j/\nu_j,\ \ 
c_i/\nu_i\mapsto c_j/\nu_j\ \ 
\text{whenever}\ \ (c_1,\dots,c_n)\in C
\]
is an isomorphism for any $i,j\in[n]$.
\end{itemize} 
\end{cor}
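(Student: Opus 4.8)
The plan is to derive Corollary~\ref{cor-paralg} directly from Theorem~\ref{thm-paralg} together with the dictionary supplied in the paragraph between the two statements. The passage from $\bar{\al C}$-language to relevant-triple-language is almost entirely translation, so I would organise the argument as a sequence of short observations rather than anything with genuine content. First, observe that the hypothesis $n\ge\max(3,k)$ is exactly the conjunction of $n\ge k\,(>1)$ and $n>2$, so all of parts (1), (7) and (8) of Theorem~\ref{thm-paralg} are available. The first bullet of the corollary is then immediate: by construction $\al C\le\prod_i\al A_i$ with $\proj_i(\al C)=\al A_i$, so $\al C$ is a subdirect product of the $\al A_i$; this does not even need the theorem, only the definitions preceding it.

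Next I would verify the second bullet. Part (1) says $\bar{\al A}_i=\al A_i/\delta_i$ is subdirectly irreducible, which by definition means $\delta_i$ is completely meet irreducible in $\Con(\al A_i)$, giving condition (a) of a relevant triple; let $\theta_i\in\Con(\al A_i)$ be the unique congruence with $\theta_i/\delta_i$ equal to the monolith $\mu_i$ of $\bar{\al A}_i$, so $\theta_i$ is the upper cover of $\delta_i$, which is condition (b). Set $\nu_i:=(\delta_i:\theta_i)$, condition (c) by fiat. For condition (d), part (7) tells us the monolith $\mu_i$ is abelian, i.e.\ $[\mu_i,\mu_i]=0$ in $\Con(\bar{\al A}_i)$, which pulls back to $[\theta_i,\theta_i]\le\delta_i$ in $\Con(\al A_i)$, i.e.\ $\theta_i/\delta_i$ is abelian; note this uses that $\var V(\al A)$ is congruence modular (Theorem~\ref{thm-parterm-cm}) so that the commutator behaves well and the correspondence theorem applies. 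Thus $(\delta_i,\theta_i,\nu_i)$ is a relevant triple of $\al A_i$, and $\delta_i$ is its first component, as claimed. I should also record the identification $(0:\mu_i)=\rho_i=\nu_i/\delta_i$, which is the standard fact that centralizers correspond under the lattice isomorphism $\Con(\al A_i/\delta_i)\cong\interval{\delta_i}{1}$; from it $\bar{\al A}_i/\rho_i\cong(\al A_i/\delta_i)/(\nu_i/\delta_i)\cong\al A_i/\nu_i$ by the third isomorphism theorem.

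For the third bullet I would feed part (8) through this last identification. Part (8) asserts that the image of $\bar{\al C}\hookrightarrow\prod_\ell\bar{\al A}_\ell\twoheadrightarrow\bar{\al A}_i/\rho_i\times\bar{\al A}_j/\rho_j$ is the graph of an isomorphism $\bar{\al A}_i/\rho_i\to\bar{\al A}_j/\rho_j$. Composing with the isomorphisms $\bar{\al A}_i/\rho_i\cong\al A_i/\nu_i$ just established, and chasing the definition of the reduced representation $\bar{\al C}=\al C/\delta_{\al C}$ (so that the class of $(c_1,\dots,c_n)\in C$ maps to $(c_i/\nu_i,c_j/\nu_j)$), this graph is precisely the relation $\{(c_i/\nu_i,c_j/\nu_j):(c_1,\dots,c_n)\in C\}$; saying it is the graph of an isomorphism is exactly the assertion that $\iota_{ij}\colon c_i/\nu_i\mapsto c_j/\nu_j$ is well defined and is an isomorphism $\al A_i/\nu_i\to\al A_j/\nu_j$. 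That completes the corollary.

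I do not anticipate a serious obstacle here, since the corollary is a restatement; the only place demanding any care is making the bookkeeping between the three isomorphism theorems, the commutator-centralizer correspondence under $\Con(\al A_i/\delta_i)\cong\interval{\delta_i}{1}$, and the definition of the reduced representation fully explicit, so that ``graph of an isomorphism'' on the $\bar{\al C}$ side really does become the well-definedness and bijectivity of $\iota_{ij}$ on the $\al A_i/\nu_i$ side. If anything were to go wrong it would be a subtlety about whether $\theta_i$ is genuinely the \emph{unique} cover of $\delta_i$ (needed to pin down $\theta_i$ from $\mu_i$) — but this is guaranteed by subdirect irreducibility of $\bar{\al A}_i$ from part~(1), so the translation goes through cleanly.
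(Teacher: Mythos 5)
Your argument is correct and is precisely the argument the paper intends: the paragraph between Theorem~\ref{thm-paralg} and Corollary~\ref{cor-paralg} in the paper is exactly the dictionary you spell out ($\delta_i$ completely meet irreducible, $\mu_i=\theta_i/\delta_i$, $\rho_i=\nu_i/\delta_i$, $\bar{\al A}_i/\rho_i\cong\al A_i/\nu_i$), and the paper then presents the corollary as a restatement of parts (1), (7), (8) of the theorem rather than giving a separate proof. You have merely made the translation explicit, including the correct observation that the first bullet is purely definitional and that congruence modularity is what lets the commutator and centralizer transfer across the correspondence $\Con(\al A_i/\delta_i)\cong\interval{\delta_i}{1}$.
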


\section{Reduction to Abelian Congruences of Bounded Index}\label{reduc}

Our main tool for proving Theorem~\ref{thm-main} will be 
Theorem~\ref{thm-WZ}. 
If $\al A$ is a finite algebra satisfying the assumptions of 
Theorem~\ref{thm-main}, then Corollary~\ref{cor-ssc-rs} and 
Theorem~\ref{thm-parterm-cm} imply that (C1) holds in the congruence lattice
of every subalgebra of $\al A$.
Hence $\nu'/\delta'$ is abelian for every relevant triple
$(\delta',\theta',\nu')$ of a subalgebra $\al A'$ of $\al A$.
Therefore,
if $C$ is an $n$-ary critical relation of $\al A$
such that, with the notation of Corollary~\ref{cor-paralg},
$\delta_i=0$ for every $i\in[n]$, then $\nu_i$ is abelian for every
$i\in[n]$, and hence the product congruence $\nu:=\prod_{i=1}^n\nu_i$
of $\prod_{i=1}^n\al A_i$ restricts to $\al C$ as an abelian congruence 
$\nu_{\al C}$.
Moreover, by the last item in Corollary~\ref{cor-paralg},
$\nu_{\al C}$ has index $<|A|$.

In this situation we can use modules associated to abelian congruences, 
as explained in Sections~\ref{modules}--\ref{mainthm-sec}, to show that
$C$ is entailed by compatible relations of bounded arity.
However, this argument does not work if, instead of an abelian interval
$\interval{0}{\nu_{\al C}}$, we have an abelian interval 
$\interval{\delta_{\al C}}{\nu_{\al C}}$ with $\delta_{\al C}\not=0$.
In the general case when
$\delta_{\al C}$ may be a nontrivial congruence
of $\al C$,
we will use the assumption that $\al A$ satisfies 
the split centralizer condition to replace $C$ by another compatible relation
$B$, which is not weaker than $C$ with respect to entailment, but 
has an abelian interval 
$\interval{0}{\tilde{\alpha}_{\al C}}$
corresponding to $\interval{\delta_{\al C}}{\nu_{\al C}}$
at the bottom of the congruence lattice, moreover, 
the index of $\tilde{\alpha}_{\al C}$
remains bounded by a number independent of $n$
(though might be much bigger than the index of $\nu_{\al C}$, 
which is $<|A|$).

The purpose of this section is to construct such a relation $B$ for every
critical relation $C$ of $\al A$. 

Given a finite algebra $\al A$, we define several constants associated to 
$\al A$ as follows.
\begin{itemize}
\item
Let $\aaa$ be the maximum of all 
$|\Aut(\al S/\nu)|$ where $\al S\le\al A$ and $(\delta,\theta,\nu)$
is a relevant triple of $\al S$.
($\aaa$ stands for `$\underline{\text{a}}$utomorphisms'.)
\item
Let $\sss$ be the number of distinct pairs $(\al S,\delta)$
such that $\al S\le\al A$ and $(\delta,\theta,\nu)$
is a relevant triple of $\al S$.
($\sss$ stands for `$\underline{\text{s}}$ubdirectly irreducible
$\underline{\text{s}}$ections $\al S/\delta$'.)
\item
Let $\ii:=|A|^{\aaa\sss}$.
($\ii$ stands for `$\underline{\text{i}}$ndex'; 
see Theorem~\ref{thm-reduction} below.)
\item
Let $\pp$ be the least positive integer with the property that
for every subalgebra $\al S\le\al A$ every relevant triple of 
$\al S$ is split by a triple $(\alpha,\beta,\kappa)$ such that 
$\al S/\kappa$ embeds into $\al A^p$ for some $p\le\pp$.
($\pp$ stands for `$\underline{\text{p}}$ower'.)
\end{itemize}

\begin{thm}
\label{thm-reduction}
Let $\al A$ be a finite algebra with a $k$-parallelogram term
such that $\al A$ satisfies the split centralizer condition.
For every $n\ge \max(3,k)$ and for every $n$-ary critical relation
$C$ of $\al A$, there exists a compatible relation $B$ of $\al A$
which has the following two properties:
\begin{enumerate}
\item[$(*)$]
There exist 
\begin{enumerate}
\item[{\rm(I)}]
subalgebras $\al B_i\le\al A^{p_i}$ with $p_i\le\pp$
for each $i\in[n]$
such that $\al B_i$ is isomorphic to a section of $\al A$, and  
\item[{\rm(II)}]
nontrivial abelian congruences 
$\tilde\alpha_i\in\Con(\al B_i)$ $(i\in[n])$
\end{enumerate}
such that
\begin{enumerate}
\item[{\rm(III)}]
$B$ is the universe of
a subdirect product $\al B$ of 
$\al B_1,\dots,\al B_n$, and
\item[{\rm(IV)}]
the product congruence
$\tilde\alpha:=\prod_{i=1}^n \tilde\alpha_i$ of $\prod_{i=1}^n\al B_i$ 
restricts to $\al B$ as a congruence 
$\tilde\alpha_{\al B}$ 
of index $\le\ii$.
\end{enumerate}
\item[$(**)$]
$\rel{R}_{1+\pp}(\al A)\cup\{B\}\dentails C$.
\end{enumerate}
\end{thm}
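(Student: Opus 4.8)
The plan is to build $B$ directly from $C$ using the split centralizer condition coordinatewise, verify $(*)$ essentially by bookkeeping, and prove $(**)$ by exhibiting $C$ as a retractive projection of a relation assembled from $B$ and low‑arity ``linking graphs''; the serious work is the index estimate $(*)$(IV). \emph{Construction.} Apply Corollary~\ref{cor-paralg}: with $\al A_i:=\proj_i(\al C)$, $\al C\le\prod_{i=1}^n\al A_i$ is subdirect, the largest product congruence $\delta=\prod_i\delta_i$ for which $\al C$ is $\delta$-saturated has each $\delta_i$ the first component of a relevant triple $(\delta_i,\theta_i,\nu_i)$ of $\al A_i$, and the maps $\iota_{ij}\colon\al A_i/\nu_i\to\al A_j/\nu_j$ are isomorphisms. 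Fixing one split per pair $(\al A_i,\delta_i)$, split $(\delta_i,\theta_i,\nu_i)$ by a triple $(\alpha_i,\beta_i,\kappa_i)$, chosen (using the definition of $\pp$) so that $\al A_i/\kappa_i$ embeds into $\al A^{p_i}$ for some $p_i\le\pp$. Put $\al B_i:=\al A_i/\kappa_i$ (realized as a subalgebra of $\al A^{p_i}$ via such an embedding), $\tilde\alpha_i:=\alpha_i/\kappa_i\in\Con(\al B_i)$, and $\al B:=\al C/\kappa_{\al C}$ with $\kappa:=\prod_i\kappa_i$; through the embeddings $\al B_i\hookrightarrow\al A^{p_i}$, $\al B$ becomes a compatible relation of $\al A$ of arity $\sum_i p_i$. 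Then (I) and (III) are immediate, and for (II): $\tilde\alpha_i$ is abelian by clause (v) of the split, and it is nontrivial since $\interval{\kappa_i}{\alpha_i}\nearrow\interval{\beta_i}{\nu_i}$ with the latter interval nontrivial ($\beta_i\le\delta_i<\theta_i\le\nu_i$), so $\kappa_i<\alpha_i$.

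\emph{Proof of $(**)$.} Since $\kappa_i\le\beta_i\le\delta_i$ we have $\kappa\le\delta$, so the $\delta$-saturated algebra $\al C$ is also $\kappa$-saturated: $C=C[\kappa]$. For each $i$ let $\Gamma_i\subseteq A^{p_i}\times A$ be the graph of $\al A_i\twoheadrightarrow\al A_i/\kappa_i\hookrightarrow\al A^{p_i}$, a compatible relation of $\al A$ of arity $p_i+1\le1+\pp$. Using only quantifier-free primitive positive constructs (products with the full unary relation $A$, permutations, intersection) assemble from $B$ and the $\Gamma_i$ the relation
\[
B^+:=\bigl\{(b;c_1,\dots,c_n)\;:\;b\in B,\ (\proj_{J_i}(b),c_i)\in\Gamma_i\ \text{for all }i\bigr\}
\]
on $A^{(\sum_i p_i)+n}$, where $J_i$ indexes the $i$-th block of $B$'s coordinates. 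Then the projection of $B^+$ onto the $n$ new coordinates equals $C[\kappa]=C$, and this projection is retractive: $c\mapsto\bigl((f_i(c_i/\kappa_i))_i;c\bigr)$ (with $f_i\colon\al A_i/\kappa_i\hookrightarrow\al A^{p_i}$) is a homomorphism $\al C\to\al B^+$ splitting it. By Theorem~\ref{thm-dentails}, together with the observation that quantifier-free primitive positive definitions are $\dentails$-entailments, this gives $\rel{R}_{1+\pp}(\al A)\cup\{B\}\dentails C$.

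\emph{Proof of $(*)$(IV).} Since $\tilde\alpha_{\al B}$ pulls back along $\al C\to\al C/\kappa_{\al C}$ to exactly $\alpha_{\al C}$ (where $\alpha:=\prod_i\alpha_i$), we have $\al B/\tilde\alpha_{\al B}\cong\al C/\alpha_{\al C}$, so it suffices to show $|C/\alpha_{\al C}|\le\ii=|A|^{\aaa\sss}$. Partition $[n]$ by the pair $(\al A_i,\delta_i)$; there are at most $\sss$ blocks, and (by the fixed choice of splits above) $\al A_i,\delta_i,\theta_i,\nu_i,\alpha_i,\kappa_i$ are constant on each block. As $\al C/\alpha_{\al C}$ embeds subdirectly into the product over these blocks of $\proj_P(\al C)/\alpha$, it is enough to bound $|\proj_P(\al C)/\alpha|$ by $|A|^{\aaa}$ for one block $P$, with common data $\al S,\delta,\theta,\nu,\alpha$. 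By Corollary~\ref{cor-paralg} the maps $\iota_{ij}$ ($i,j\in P$) are automorphisms of $\al S/\nu$; fixing $i_0\in P$ and grouping the coordinates of $P$ by the value of $\iota_{i_0 i}\in\Aut(\al S/\nu)$ yields at most $\aaa$ subgroups, and $\proj_P(\al C)/\alpha$ surjects onto $\al S/\nu$ (of size $<|A|$). The heart of the matter is to show $\proj_P(\al C)/\alpha$ embeds into $(\al S/\alpha)^{|\Aut(\al S/\nu)|}$ — one factor per $\iota$-value — whence $|\proj_P(\al C)/\alpha|\le|A|^{\aaa}$; summing over the $\le\sss$ blocks then gives $|C/\alpha_{\al C}|\le(|A|^{\aaa})^{\sss}=\ii$. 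I expect this embedding to be the main obstacle. Note that the pairwise linking data alone is insufficient: using only that $\al C$ is $\delta$-saturated one computes, for coordinates $i,j$ in a common $\iota$-subgroup, that $\{(c_i,c_j):c\in\al C\}$ projects modulo $\alpha$ onto all of $\nu/\alpha$, so coordinates within a subgroup are not pointwise redundant. The collapse must instead be extracted from the fact that $C$ is \emph{critical} — directly $\cap$-irreducible and directly indecomposable in the subuniverse lattice of $\al A^n$ — together with the module structure on the abelian congruence $\nu/\alpha$: criticality should force the subdirect product $\proj_P(\al C)/\alpha$, a bounded ``affine extension'' of $\al S/\nu$ by copies of that module, to be determined by boundedly many of its coordinates, and organizing the choice of coordinates through $\Aut(\al S/\nu)$ is what produces the exponent $\aaa$.
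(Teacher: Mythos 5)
You have correctly identified the main obstacle yourself, and it is real: with $\al B := \al C/\kappa_{\al C}$, the index of $\tilde\alpha_{\al B}\cong \alpha_{\al C}$ in $\al C$ is \emph{not} bounded independently of $n$. The isomorphisms $\iota_{ij}$ from Corollary~\ref{cor-paralg} only link the coordinates modulo $\nu_i$, and the abelian interval $\interval{\alpha_i}{\nu_i}$ allows coordinates within a single $\sim$-class of $[n]$ to vary $\alpha$-independently; your own computation with pairs in a common ``$\iota$-subgroup'' shows exactly this. The speculative fix --- extracting a bounded coordinate-determinacy from criticality and the module structure on $\nu/\alpha$ --- is not carried out, and the paper never uses such an argument: criticality is only invoked earlier, to get Corollary~\ref{cor-paralg}.

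What the paper actually does is different in a way that dissolves the difficulty. Instead of bounding $|C/\alpha_{\al C}|$, it passes to a subrelation $D\subseteq C$ consisting of those $(d_1,\dots,d_n)\in C$ with $d_i\equiv_{\alpha_j}d_j$ whenever $i\sim j$ (where $i\sim j$ iff the data $(\al A_i,\delta_i,\theta_i,\nu_i)$ agree and $\iota_{ij}=\id$), and sets $\al B:=\phi[\al D]$, not $\phi[\al C]$. By construction $D/\alpha_{\al D}$ is determined by projecting onto a transversal of $\sim$, of size $\le\aaa\sss$, which gives the index bound $\ii=|A|^{\aaa\sss}$ immediately. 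The price is that one must still verify that $D$ is a genuine compatible relation and that $C$ is still entailed. The first is handled by showing $D$ is pp-definable from $C$ and the $\alpha_i$'s; the second is the nontrivial step: using that $\alpha_t/\kappa_t$ is abelian and $\var{V}$ is congruence modular, $\alpha_t$ permutes with $\beta_t$ so $\nu_t=\beta_t\circ\alpha_t$, which yields $D[\beta]=C$; the entailment $\rel{R}_{1+\pp}(\al A)\cup\{B\}\dentails C$ then goes through a relation built from $B$ together with $\beta_i\circ\phi_i$ (rather than from $B$ and the plain graphs $\Gamma_i$ as in your version), and the ``fixing a transversal $T$'' clause in that construction is what makes the recovering projection both surjective onto $C$ and injective, via the uniqueness part of the $D$-lifting. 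So: your $(**)$ argument is fine for your choice of $B$, but your choice of $B$ fails $(*)$(IV); the paper's $B$ satisfies $(*)$(IV) by fiat, and the $\beta$-linking is exactly what is needed to preserve $(**)$.
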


\begin{proof}
Corollary~\ref{cor-paralg} describes the structure of $C$. 
Using the notation of Corollary~\ref{cor-paralg}, we first
define two equivalence relations, $\approx$ and $\sim$, 
on $[n]$
as follows: 
\begin{itemize}
\item
$i\approx j$ iff $\al A_i=\al A_j$ and 
$(\delta_i,\theta_i,\nu_i)=(\delta_j,\theta_j,\nu_j)$;
\item
$i\sim j$ iff $i\approx j$ and  
$\iota_{ij}$ is the identity isomorphism.
\end{itemize}
To give an upper bound for the indices of $\approx$ and $\sim$,
notice that if $i,j\in[n]$ are such that 
$\al A_i=\al A_j$ and $\delta_i=\delta_j$, then it follows that
$(\delta_i,\theta_i,\nu_i)=(\delta_j,\theta_j,\nu_j)$, so $i\approx j$. 
This implies that $\approx$ has at most $\sss$ classes, that is,
$\approx$ has index $|[n]/{\approx}|\le\sss$.
The definition of $\sim$ shows that
every class of ${\approx}/{\sim}$ has size $\le\aaa$.
Hence $\sim$ has at most $\aaa\sss$ classes, that is,
$\sim$ has index $|[n]/{\sim}|\le\aaa\sss$.

Our assumption that $\al A$ satisfies the split centralizer condition
ensures that for each $i\in[n]$ the relevant triple
$(\delta_i,\theta_i,\nu_i)$ of $\al A_i\ (\le\al A)$ is split
by a triple $(\alpha_i,\beta_i,\kappa_i)$.
These choices could differ coordinate by coordinate, but we can 
choose a transversal $T_\approx$ for the classes of $\approx$ and 
redefine the triples in coordinates $i\notin T_\approx$ to arrange that
$(\alpha_i,\beta_i,\kappa_i)=(\alpha_j,\beta_j,\kappa_j)$
whenever $i\approx j$ ($i,j\in[n]$).

Let $\alpha$, $\beta$, $\delta$, and $\kappa$ 
denote the product congruences $\prod_{i=1}^n \alpha_i$, 
$\prod_{i=1}^n \beta_i$, $\prod_{i=1}^n \delta_i$, 
and $\prod_{i=1}^n \kappa_i$ of 
$\prod_{i=1}^n\al A_i$. We have 
$\kappa\le\alpha$,
$\kappa\le\beta\le\delta$, and
$\kappa=\alpha\wedge\beta$,
because 
$\kappa_i\le\alpha_i$,
$\kappa_i\le\beta_i\le\delta_i$, and
$\kappa_i=\alpha_i\wedge\beta_i$
for all $i$.
By Corollary~\ref{cor-paralg},
$\al C$ is $\delta$-saturated, and hence also 
$\beta$- and $\kappa$-saturated.

Now let $D$ denote the set
of all tuples $(d_1,\dots,d_n)\in C$ 
such that 
\begin{enumerate}
\item[$(\dagger)$]
$d_i\equiv_{\alpha_j} d_j$ whenever $i\sim j$. 
\end{enumerate}
Note that $i\sim j$ implies that $\alpha_i=\alpha_j$,
so $\alpha_i$ and $\alpha_j$ are interchangeable in $(\dagger)$.

\begin{clm}
\label{clm-c-d}
Let $T$ be a transversal for the classes of $\sim$.
For every $(c_1,\dots,c_n)\in C$ 
\begin{enumerate}
\item[{\rm(1)}]
there exists 
$(d_1,\dots,d_n)\in D$ such that 
\begin{enumerate}
\item[$(\ddagger)$]
$(d_1,\dots,d_n)\equiv_\beta(c_1,\dots,c_n)$ and $d_t=c_t$ for all $t\in T$;
\end{enumerate}
\item[{\rm(2)}]
the tuples $(d_1,\dots,d_n)\in D$ satisfying $(\ddagger)$
are uniquely determined modulo $\kappa$.
\end{enumerate}
\end{clm}

\noindent
{\it Proof of Claim~\ref{clm-c-d}.}
Choose $(c_1,\dots,c_n)\in C$.

For (1), 
define $d_t:=c_t$ for $t\in T$. 
Our aim is to show that 
\begin{multline}
\label{eq-dtuple}
\text{\qquad for each $t\in T$ and for all $i\sim t$ with $i\not=t$}\\
\text{there exist $d_i\in A_i$ such that 
$c_i\equiv_{\beta_i} d_i\equiv_{\alpha_t} d_t$.\qquad}
\end{multline}
This will complete the proof of (1) for the following reason.
Let $(d_1,\dots,d_n)$ be a tuple obtained in this way.
By its construction, $(d_1,\dots,d_n)$ will satisfy condition $(\ddagger)$. 
It will also satisfy condition $(\dagger)$, because
whenever $i\sim j$, we have a $t\in T$ with $i,j\sim t$,
so $d_i,d_j\equiv_{\alpha_t} d_t$; since $\alpha_j=\alpha_t$, we get that
$d_i\equiv_{\alpha_j} d_j$.
Now, since $C$ is $\beta$-saturated, the fact that $(\ddagger)$ 
holds for $(d_1,\dots,d_n)$
ensures that $(d_1,\dots,d_n)\in C$. 
Hence 
the fact that $(\dagger)$ also holds for $(d_1,\dots,d_n)$
implies that $(d_1,\dots,d_n)\in D$.

To prove \eqref{eq-dtuple}, 
choose $t\in T$ and $i\sim t$ with $i\not=t$.
Then $i\sim t$ implies that $\al A_i=\al A_t$, $\beta_i=\beta_t$,
$\nu_i=\nu_t$, and $c_i/\nu_i=c_t/\nu_t$. Hence,
$c_i\equiv_{\nu_t} c_t$. 
By our assumptions, $(\alpha_t,\beta_t,\kappa_t)$
splits $(\delta_t,\theta_t,\nu_t)$, so 
$\alpha_t\vee\beta_t=\nu_t$ and 
$\alpha_t/\kappa_t$ is an abelian congruence of 
$\al A_t/\kappa_t$.
Since $\al A_t/\kappa_t$ lies in a congruence modular variety,
we have that $\alpha_t/\kappa_t$ permutes with all congruences of 
$\al A_t/\kappa_t$ (see \cite[Theorem~6.2]{freese-mckenzie}). 
Hence, $\nu_t=\alpha_t\vee\beta_t=\beta_t\circ\alpha_t$. 
Thus, the fact that
$c_i\equiv_{\nu_t} c_t$ holds implies that
there exists $d_i\in A_t=A_i$ such that 
$c_i\equiv_{\beta_t} d_i\equiv_{\alpha_t} c_t=d_t$. Since $\beta_i=\beta_t$,
this $d_i$ is the element we wanted to find.

For (2),
assume that $(d_1,\dots,d_n),\,(d_1',\dots,d_n')\in D$ 
both satisfy condition $(\ddagger)$.
We want to show that 
$(d_1,\dots,d_n)\equiv_{\kappa}(d_1',\dots,d_n')$.
Since $\kappa=\alpha\wedge\beta$, this is equivalent to showing that
$(d_1,\dots,d_n)\equiv_{\alpha}(d_1',\dots,d_n')$ and
$(d_1,\dots,d_n)\equiv_{\beta}(d_1',\dots,d_n')$. 
The latter follows from the assumption that 
both tuples satisfy condition $(\ddagger)$.
To prove the former, consider any $i\in[n]$, and let
$t\in T$ be such that $i\sim t$.
Combining the second part of condition $(\ddagger)$ with
condition $(\dagger)$ in the definition of $D$, we get that
$c_t=d_t\equiv_{\alpha_i} d_i$, and similarly, 
$c_t=d_t'\equiv_{\alpha_i} d_i'$, hence
$d_i\equiv_{\alpha_i} d_i'$.
Thus, $(d_1,\dots,d_n)\equiv_{\alpha}(d_1',\dots,d_n')$, completing the
proof of (2).
\hfill$\diamond$

\begin{clm}
\label{clm-saturation}
$D$ is the universe of a subalgebra $\al D$ of
$\al C$ with the following properties:
\begin{enumerate}
\item[{\rm(1)}]
$\al D$ is a subdirect product of $\al A_1,\dots,\al A_n$,
\item[{\rm(2)}]
$\proj_{T}(\al D)=\proj_{T}(\al C)$ for every transversal $T$ for 
the classes of $\sim$,
\item[{\rm(3)}] 
$\al D[\beta]=\al C$, and
\item[{\rm(4)}]
$\al D[\kappa]=\al D$.
\end{enumerate}
\end{clm}

\noindent
{\it Proof of Claim~\ref{clm-saturation}.}
Consider a
transversal $T$ for the classes of $\sim$.
Then $D\subseteq C$ implies that 
$\proj_{T}(D)\subseteq\proj_{T}(C)$, and
Claim~\ref{clm-c-d}~(1) shows that 
$\proj_{T}(D)\supseteq\proj_{T}(C)$.
Thus, $\proj_{T}(D)=\proj_{T}(C)$.
This implies that
$\proj_t(D)=\proj_t(C)=A_t$ for every $t\in T$.
Since every element of $[n]$ occurs in some transversal $T$,
we get that $\proj_i(D)=A_i$ for all $i\in[n]$.
In particular, we see that $D\not=\emptyset$.
The definition of $D$ shows that $D\subseteq C$ and $D$ 
is definable by a primitive positive 
formula, using the compatible relations 
$\alpha_1,\dots,\alpha_n$ and $C$ of $\al A$.
Hence, $D$ is a compatible relation of $\al A$.
These arguments prove that  $D$ is the universe of a subalgebra 
$\al D$ of $\al C$ satisfying (1)--(2).

For (3), observe that
since $\al D\le\al C$ and $\al C$ is $\beta$-saturated, it follows that
$\al D[\beta]\le \al C[\beta]$ and $\al C[\beta]=\al C$. 
Thus, $\al D[\beta]\le\al C$.
However, we get from Claim~\ref{clm-c-d}~(1) that 
$C\subseteq D[\beta]$. 
Hence, $\al D[\beta]=\al C$, as claimed.

Finally, we prove (4).
Clearly, $\al D\le\al D[\kappa]$.
The reverse inclusion $D[\kappa]\subseteq D$ can be established as follows:
\[
D[\kappa]
\subseteq D[\alpha]\cap D[\beta]
= D[\alpha]\cap C
\subseteq D,
\]
where the last inclusion
is an immediate consequence of the definition of $D$. 
This completes the proof of 
Claim~\ref{clm-saturation}.
\phantom{m}
\hfill$\diamond$

\begin{clm}
\label{clm-index-of-alpha}
For every transversal $T$
for the classes of $\sim$,
\[
|\al D/\alpha_{\al D}|\le|\proj_{T}(\al D)|\le\ii.
\]
\end{clm}

\noindent
{\it Proof of Claim~\ref{clm-index-of-alpha}.}
Let $T$ be a transversal for the classes of $\sim$.
From the inequality $|[n]/{\sim}|\le\aaa\sss$ proved earlier
we get that $|T|\le\aaa\sss$.
Thus,
$|\proj_{T}(\al D)|
\le|A|^{|T|}
\le|A|^{\aaa\sss}=\ii$, which establishes the second 
inequality.

It follows from the definition of $D$ that
the kernel of the projection map 
$\proj_T\colon\al D\to\proj_T(\al D)$
is contained in $\alpha$. This implies the 
first inequality in Claim~\ref{clm-index-of-alpha}.
\hfill$\diamond$

\medskip

Now we are ready to define the algebras $\al B_1,\dots,\al B_n$, $\al B$,
and the congruences $\tilde{\alpha}_i\in\Con(\al B_i)$ for which
the conclusions $(*)$--$(**)$ of Theorem~\ref{thm-reduction} hold.

Since each $\kappa_i$ is a $\var{Q}$-congruence of $\al A_i$
for $\var{Q}=\Su\Pd(\al A)$, we have that
there exist 
subalgebras $\al B_i\le\al A^{p_i}$ with $p_i\le\pp$
for each $i$, and surjective homomorphisms $\phi_i\colon\al A_i\to\al B_i$
with kernels $\kappa_i$ $(i\in[n])$ such that
$\phi_i$ induces an isomorphism 
$\bar\phi_i\colon\al A_i/\kappa_i\to\al B_i$.
Recall that we chose the splitting 
triples so that $(\alpha_i,\beta_i,\kappa_i)=(\alpha_j,\beta_j,\kappa_j)$
whenever $i\approx j$ (and hence $\al A_i=\al A_j$).
With the same reasoning, we can arrange that 
the algebras $\al B_i$ and the
homomorphisms $\phi_i$ are selected so that
$\al B_i=\al B_j$ and $\phi_i=\phi_j$ whenever 
$i\approx j$. 
For every $i\in[n]$, 
define $\tilde{\alpha}_i\in\Con(\al B_i)$ to be the image of 
$\alpha_i\in\Con(\al A_i)$ under the homomorphism $\phi_i$,
which is the same as the image of 
$\alpha_i/\kappa_i\in\Con(\al A_i/\kappa_i)$ under the isomorphism
$\bar{\phi}_i$; so $\tilde{\alpha}_i$ is indeed a congruence of 
$\al B_i$.
Furthermore, define $\al B$ to be the image of $\al D$ under 
the product homomorphism
$\phi:=\prod_{i=1}^n\phi_i\colon\prod_{i=1}^n\al A_i\to\prod_{i=1}^n\al B_i$.

\begin{clm}
\label{clm-Bs-alphas}
Conditions {\rm(I)--(IV)} hold for 
the algebras $\al B_1,\dots,\al B_n$ and $\al B$, and 
for the congruences $\tilde{\alpha}_1,\dots,\tilde{\alpha}_n$
defined above.
Moreover, $\al D$ is the full inverse image of $\al B$ under the
homomorphism $\phi$.
\end{clm}

\noindent
{\it Proof of Claim~\ref{clm-Bs-alphas}.}
(I) follows from the choice of the $\al B_i$'s.

(II) holds, because for every $i\in[n]$,
$\tilde{\alpha}_i=\bar{\phi}_i[\alpha_i/\kappa_i]$
where $\bar{\phi}_i\colon\al A_i/\kappa_i\to\al B_i$
is an isomorphism and
$\alpha_i/\kappa_i$ is a nontrivial abelian congruence of $\al A_i$.
(The latter follows from the fact that
$(\alpha_i,\beta_i,\kappa_i)$ splits the relevant
triple $(\delta_i,\theta_i,\nu_i)$ of $\al A_i$.)
 
For (III), recall that
$\al D$ is a subdirect product of $\al A_1,\dots,\al A_n$
by Claim~\ref{clm-saturation}(1), and the homomorphisms
$\phi_i\colon\al A_i\to\al B_i$ ($i\in[n]$) are onto.
Since $\phi=\prod_{i=1}^n\phi_i$, it follows that
$\al B=\phi[\al D]$ 
is a subdirect product of $\al B_1,\dots,\al B_n$.

To verify (IV) notice that 
$\phi:=\prod_{i=1}^n\phi_i\colon\prod_{i=1}^n\al A_i\to\prod_{i=1}^n\al B_i$
is a surjective homomorphism with kernel
$\kappa=\prod_{i=1}^n\kappa_i$, therefore
$\phi$ decomposes as shown by the first line of the array below:
\begin{equation}
\label{eq-phi}
\begin{matrix}
\phi\colon  & \prod_{i=1}^n\al A_i &
\stackrel{\textrm{nat}}{\to} & \bigl(\prod_{i=1}^n\al A_i\bigr)/\kappa &
\stackrel{\cong}{\to} & \prod_{i=1}^n(\al A_i/\kappa_i) &
\stackrel{\bar{\phi}}{\to} & \prod_{i=1}^n\al B_i & \phantom{m}\\[6pt]
 & \alpha &
\mapsto & \alpha/\kappa &
\mapsto & \prod_{i=1}^n\alpha_i/\kappa_i &
\mapsto & \tilde{\alpha}\\[6pt]
\phi\restr_{\al D}\colon  & \al D &
\stackrel{\textrm{nat}}{\to} & \al D/\kappa_{\al D} &
            & \stackrel{\bar{\phi}\circ\cong}{\longrightarrow}  &
            & \al B\\[6pt]
 & \alpha_{\al D} &
\mapsto & \alpha_{\al D}/\kappa_{\al D} &
            & \mapsto & 
            &  \tilde{\alpha}_{\al B}\\
\end{matrix}
\end{equation}
The leftmost factor of $\phi$ is the natural map, 
the middle factor is the natural isomorphism, and
the rightmost factor is the isomorphism $\bar{\phi}:=\prod_{i=1}^n\bar{\phi}_i$.
Since $\tilde{\alpha_i}=\phi[\alpha_i]$ for every $i\in[n]$, $\phi$ maps
the congruence $\alpha=\prod_{i=1}^n\alpha_i$ of $\prod_{i=1}^n\al A_i$
onto the congruence
$\tilde{\alpha}=\prod_{i=1}^n\tilde{\alpha}_i$ of $\prod_{i=1}^n\al B_i$, via
this factorization, as indicated by the second line of
\eqref{eq-phi}.
Therefore, combining the two isomorphisms among the factors of $\phi$ and
restricting $\phi$ to $\al D$ yields that $\phi\restr_{\al D}$ factors
and acts on $\alpha_{\al D}$ as the third and fourth lines
of \eqref{eq-phi} show.
Thus,
\[
\al D/\alpha_{\al D}
\cong (\al D/\kappa_{\al D})/(\alpha_{\al D}/\kappa_{\al D})
\cong \al B/\tilde\alpha_{\al B}.
\]
This implies that the index of $\tilde{\alpha}_{\al B}$ in $\al B$
is $|\al B/\tilde\alpha_{\al B}|=|\al D/\alpha_{\al D}|$, which is $\le\ii$
by Claim~\ref{clm-index-of-alpha}.

For the last statement of the claim, $B=\phi[D]$ implies that
$\phi^{-1}[B]\supseteq D$. Since the kernel of $\phi$ is $\kappa$,
the equality $B=\phi[D]$ also implies that $\phi^{-1}[B]\subseteq D[\kappa]$.
But we know from Claim~\ref{clm-saturation} that 
$D[\kappa]=D$, so the equality $\phi^{-1}[B]=D$ we wanted to prove follows.
\hfill$\diamond$

\begin{clm}
\label{clm-C-entailed}
For every $i\in[n]$, $A_i$, $B_i$,
the (graph of the) homomorphism $\phi_i$, and 
\[
\beta_i\circ\phi_i:=\{(x,y)\in A_i\times B_i: x\equiv_{\beta_i} z
\ \text{and}\ y=\phi_i(z)\ \text{for some $z\in A_i$}
\}
\]
are compatible relations of $\al A$. Moreover,
\begin{equation}
\label{eq-C-entailed}
\{A_i,\,B_i,\,\phi_i,\,\,\beta_i\circ\phi_i:i\in[n]\}\cup\{B\}
\dentails C.
\end{equation}
\end{clm}

\noindent
{\it Proof of Claim~\ref{clm-C-entailed}.}
Choose any $i\in[n]$. Since $\al A_i\le\al A$ and $\al B_i\le\al A^{p_i}$,
we have that $A_i\in\rel{R}_1(\al A)$ and
$B_i\in\rel{R}_{p_i}(\al A)$.
The fact that $\phi_i$ is a homomorphism $\al A_i\to\al B_i$
implies that
its graph is the universe of a subalgebra of 
$\al A_i\times\al B_i\le\al A^{1+p_i}$. 
Thus, $\phi_i\in\rel{R}_{1+p_i}(\al A)$.
The definition of $\beta_i\circ\phi_i$ shows that
$\beta_i\circ\phi_i\not=\emptyset$ and
$\beta_i\circ\phi_i$ is definable by a primitive positive 
formula, using the compatible relations 
$A_i, B_i, \beta_i, \phi_i\in\rel{R}(\al A)$, 
therefore $\beta_i\circ\phi_i\in\rel{R}(\al A)$.
In fact, $\beta_i\circ\phi_i$ is also the universe
of a subalgebra of $\al A_i\times\al B_i\le\al A^{1+p_i}$, so
$\beta_i\circ\phi_i\in\rel{R}_{1+p_i}(\al A)$. 
This proves the first statement of the claim.

To prove the second statement, let us fix a transversal $T$ for the 
classes of $\sim$, and define $X$ to be the
set of all tuples
\[(c_1,\dots,c_n,b_1,\dots,b_n)\ 
\in \prod_{i=1}^n A_i\times\prod_{i=1}^n B_i\,\Bigl(\subseteq A^{n+\sum p_i}\Bigr)
\]
such that 
\begin{itemize}
\item
$(b_1,\dots,b_n)\in B$, 
\item
$(c_i,b_i)\in\beta_i\circ\phi_i$ for all $i\in[n]$, and
\item
$b_t=\phi_t(c_t)$ for all $t\in T$.
\end{itemize}
Our main goal is to show that
\begin{enumerate}
\item[(i)]
$\proj_{[n]}(X)=C$ and
\item[(ii)]
the projection map $\proj_{[n]}\colon X\to C$ is one-to-one,
\end{enumerate}
because we can deduce \eqref{eq-C-entailed} from 
statements (i)--(ii) as follows.
We have $X\not=\emptyset$ by (i), and
$X$ is definable by a quantifier-free primitive positive formula
using the relations $A_i$, $B_i$, $B$, $\phi_t$, and $\beta_i\circ\phi_i$, 
therefore
\[
\{A_i,\,B_i,\,\phi_i,\,\,\beta_i\circ\phi_i:i\in[n]\}\cup\{B\}
\dentails X.
\]
Furthermore, by (i) and (ii), 
$C$ can be obtained from $X$ by bijective projection, which is an
$\dentails$-construct, so $\{X\}\dentails C$. 
Hence \eqref{eq-C-entailed} follows by the transitivity of
$\dentails$.

It remains to prove (i) and (ii).
For the inclusion $\supseteq$ in (i), let $(c_1,\dots,c_n)\in C$.
By Claim~\ref{clm-c-d}, there exists $(d_1,\dots,d_n)\in D$ such that
$(d_1,\dots,d_n)\equiv_\beta(c_1,\dots,c_n)$ and $d_t=c_t$ for all $t\in T$.
Let $b_i=\phi_i(d_i)$ for each $i\in[n]$.
By the definition of $B$, this choice ensures that
$(b_1,\dots,b_n)\in B$.
Furthermore, $c_i\equiv_{\beta_i} d_i$ and $b_i=\phi_i(d_i)$ imply
$(c_i,b_i)\in\beta_i\circ\phi_i$ for every $i\in[n]$, while
$d_t=c_t$ and $b_t=\phi_t(d_t)$ imply
$b_t=\phi_t(c_t)$ for every $t\in T$.
Hence, $(c_1,\dots,c_n,b_1,\dots,b_n)\in X$, so
$(c_1,\dots,c_n)\in\proj_{[n]}(X)$.

The inclusion $\subseteq$ in (i)
and the claim in (ii) will follow if we prove the following statement:
\begin{enumerate}
\item[{}]
for every $(c_1,\dots,c_n)\in\proj_{[n]}(X)$ we have that 
\begin{enumerate}
\item[--]
$(c_1,\dots,c_n)\in C$, and 
\item[--]
there is a unique tuple $(b_1,\dots,b_n)$ 
such that $(c_1,\dots,c_n,b_1,\dots,b_n)\in X$.
\end{enumerate}
\end{enumerate}
So, let $(c_1,\dots,c_n)\in\proj_{[n]}(X)$.
Then $(c_1,\dots,c_n,b_1,\dots,b_n)\in X$ for at least one 
tuple $(b_1,\dots,b_n)$. By the definition of $X$
it must be the case that $(b_1,\dots,b_n)\in B$, 
$(c_i,b_i)\in\beta_i\circ\phi_i$ for all $i\in[n]$, and
$b_t=\phi_t(c_t)$ for all $t\in T$.
For any $t\in T$, define $d_t:=c_t$; so $c_t\equiv_{\beta_t}d_t$ and
$b_t=\phi_t(d_t)$.
For $i\in[n]\setminus T$,
use the definition of $\beta_i\circ\phi_i$ to get 
$d_i$ such that $c_i\equiv_{\beta_i} d_i$ and $b_i=\phi_i(d_i)$.
Thus, $(c_1,\dots,c_n)\equiv_{\beta}(d_1,\dots,d_n)$ and 
$(d_1,\dots,d_n)\in\phi^{-1}[(b_1,\dots,b_n)]$.
We established in Claim~\ref{clm-Bs-alphas} that
$D=\phi^{-1}[B]$, so 
$(d_1,\dots,d_n)\in\phi^{-1}[(b_1,\dots,b_n)]$
implies that $(d_1,\dots,d_n)\in D$.
Since $D\subseteq C$ and $C$ is $\beta$-saturated,
$(c_1,\dots,c_n)\equiv_{\beta}(d_1,\dots,d_n)$ yields that
$(c_1,\dots,c_n)\in C$.

For the uniqueness of $(b_1,\dots,b_n)$ observe that 
our argument in the preceding paragraph shows the following:
if $(b_1,\dots,b_n)$ is such that
$(c_1,\dots,c_n,b_1,\dots,b_n)\in X$, then there exists
$(d_1,\dots,d_n)\in D$ with the properties 
\[
(c_1,\dots,c_n)\equiv_\beta(d_1,\dots,d_n)\in\phi^{-1}[(b_1,\dots,b_n)]
\quad\text{and}\quad
\text{$c_t=d_t$ for all $t\in T$.}
\]
So, if $(b_1',\dots,b_n')$ is another tuple with
$(c_1,\dots,c_n,b_1',\dots,b_n')\in X$, then there exists
$(d_1',\dots,d_n')\in D$ such that 
\[
(c_1,\dots,c_n)\equiv_\beta(d_1',\dots,d_n')\in\phi^{-1}[(b_1',\dots,b_n')]
\quad\text{and}\quad
\text{$c_t=d_t'$ for all $t\in T$.}
\]
It follows from Claim~\ref{clm-c-d}(2) that 
$(d_1,\dots,d_n)\equiv_{\kappa_\al D}(d_1',\dots,d_n')$.
Since $\kappa_{\al D}$ is the kernel of $\phi$ we get that
\[
(b_1,\dots,b_n)
=\phi\bigl((d_1,\dots,d_n)\bigr)
=\phi\bigl((d_1',\dots,d_n')\bigr)
=(b_1',\dots,b_n'),
\] 
proving the uniqueness of $(b_1,\dots,b_n)$.
\hfill$\diamond$

\medskip
Statement $(*)$ of Theorem~\ref{thm-reduction} was proved in 
Claim~\ref{clm-Bs-alphas}. Statement $(**)$ of Theorem~\ref{thm-reduction}
follows from Claim~\ref{clm-C-entailed} and the fact
(established in the proof of Claim~\ref{clm-C-entailed}) that
each one of the relations $A_i,B_i,\phi_i,\beta_i\circ\phi_i$
($i\in[n]$)
is a member of $\rel{R}_m(\al A)$ for some $m\le 1+\pp$.

This completes the proof of Theorem~\ref{thm-reduction}.
\end{proof}

\section{Abelian Congruences and Modules}\label{modules}

In \cite[Chapter~9]{freese-mckenzie} Freese and McKenzie
describe how matrix rings of any size can be associated  
to a congruence modular variety $\var{V}$, and then
modules over these rings 
to any abelian congruence $\alpha$ of an algebra $\al C$ 
in $\var{V}$.
They prove in \cite[Theorem~9.9]{freese-mckenzie}
that there is a strong connection between
the pair $(\al C,\alpha)$ and the associated modules, namely,
if all $\alpha$-classes are represented in the module, then
the interval $\interval{0}{\alpha}$ in $\Con(\al C)$
is isomorphic to the lattice of submodules of 
the associated module.

Our goal in this section is to prove an analogous result for subalgebras
in place of congruences.
We will start by recalling the relevant definitions and basic facts
from \cite[Chapter~9]{freese-mckenzie}, but we will 
slightly change the notation.

Throughout this section we will work under the following global assumptions:
\begin{itemize}
\item
$\var{V}$ is a congruence modular variety,
\item
$d$ is a fixed difference term for $\var{V}$, and
\item
$\mathcal{O}$ is a fixed nonempty set of constant symbols not occurring 
in the language of $\var{V}$.
\end{itemize}
Recall that every 
congruence modular variety has a difference term 
(see \cite[Theorem~5.5]{freese-mckenzie}), which is a ternary term $d$
satisfying the following conditions
for every algebra $\al C\in\var{V}$ and congruence $\vartheta$ of $\al C$:
\begin{align}
d^{\al C}(x,x,y) & {}= y 
\quad \text{for all $x,y\in\al C$, and} 
\label{eq-diffterm1}\\
d^{\al C}(y,x,x) & {}\equiv_{[\vartheta,\vartheta]} y
\quad \text{for all $x,y\in\al C$ such that $x\equiv_\vartheta y$.}
\label{eq-diffterm2}
\end{align}
In addition, $d$ also satisfies
the following condition (see \cite[Proposition~5.7]{freese-mckenzie}): for 
every term $t=t(x_1,\dots,x_k)$ and for arbitrary abelian congruence $\alpha$
of an algebra $\al C\in\var{V}$, 
\begin{multline}
\label{eq-diffterm3}
d^{\al C}(t^{\al C}(u_1,\dots,u_k),t^{\al C}(v_1,\dots,v_k),
                                          t^{\al C}(w_1,\dots,w_k))\\
=t^{\al C}(d^{\al C}(u_1,v_1,w_1),\dots,d^{\al C}(u_k,v_k,w_k))\\
\text{whenever
$u_i,v_i,w_i\in\al C$ are such that
$u_i\equiv_\alpha v_i\equiv_\alpha w_i$ for all $i$.}
\end{multline}

We will use the set $\mathcal{O}$ of new constant symbols
to expand the language of $\var{V}$. 
Terms in the expanded language will
be called \emph{$\mathcal{O}$-terms}, and the algebras obtained 
from the members of $\var{V}$ by interpreting all constant symbols $o\in O$
will be called \emph{$\mathcal{O}$-algebras}.  
The class of algebras obtained in this way will
be denoted by $\Ovar{V}$. We will restrict the use of the phrases 
`subalgebra', `homomorphism', and `product' to the algebras in 
$\var{V}$ (i.e., to algebras in the original language), 
and use the phrases `$\mathcal{O}$-subalgebra', 
`$\mathcal{O}$-homomorphism', and
`$\mathcal{O}$-product' for $\mathcal{O}$-algebras.
There will be no need for using the phrase `$\mathcal{O}$-congruence',
because every $\mathcal{O}$-algebra has the same congruence lattice,
with the same commutator operation, as its reduct to the language of
$\var{V}$.

If we apply Corollary~5.8 (along with Lemma 5.6) 
from \cite{freese-mckenzie}
to $\mathcal{O}$-algebras in $\Ovar{V}$, we get the following.

\begin{lm}[From \cite{freese-mckenzie}]
\label{lm-group}
If $\alpha$ is an abelian congruence of 
an $\mathcal{O}$-algebra $\al C$ in $\Ovar{V}$, then
for every $o\in\mathcal{O}$, the $\alpha$-class containing $o^{\al C}$ 
is an abelian group 
\[
\Calpha{o}:=(o^{\al C}/\alpha;+_o,-_o,o^{\al C})
\]
with zero element $o^{\al C}$ for the operations
$+_o$ and $-_o$ defined by
\[
u +_o v := d^{\al C}(u,o^{\al C},v)
\quad\text{and}\quad
-_o u := d^{\al C}(o^{\al C},u,o^{\al C})
\quad\text{for all $u,v\in o^{\al C}/\alpha$.}
\]
Furthermore,
\begin{enumerate}
\item[{\rm(i)}]
$d^{\al C}(u,v,w)=u -_o v +_o w$ for all $u,v,w\in o^{\al C}/\alpha$, and
\item[{\rm(ii)}]
the $\mathcal{O}$-term operations of $\al C$ 
are linear between the $\alpha$-classes; more precisely,
if $r(x_1,\dots,x_k)$ is an $\mathcal{O}$-term and
$o_1,\dots,o_k,o\in\mathcal{O}$ are such that
$r(o_1,\dots,o_k)=o$, then 
\[
r^{\al C}\restr_{(o_1^{\al C}/\alpha)\times\dots\times(o_k^{\al C}/\alpha)}\colon
\Calpha{o_1}\times\dots\times\Calpha{o_k}
       \to\Calpha{o}
\]
is a group homomorphism. 
Consequently, for the unary $\mathcal{O}$-terms 
$r_i(x):=r(o_1,\dots,o_{i-1},x,o_{i+1},\dots,o_k)$ $(1\le i\le k)$
the maps
$r_i^{\al C}\restr_{o_i^{\al C}/\alpha}\colon
    \Calpha{o_i}\to \Calpha{o}$
are also group homomorphisms, and 
\begin{multline}
\label{eq-linDecomp}
\qquad\qquad
r^{\al C}(u_1,\dots,u_k)=r_1^{\al C}(u_1) +_o \dots +_o r_k^{\al C}(u_k)\\ 
\quad\text{for all $u_i\in o_i^{\al C}/\alpha$ $(i=1,\dots,k)$.}
\qquad
\end{multline}
\end{enumerate}
\end{lm}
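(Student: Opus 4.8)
The plan is to read off the statement as the specialization of Corollary~5.8 and Lemma~5.6 of \cite{freese-mckenzie} to $\mathcal{O}$-algebras. The first step is to justify that those results apply in the present setting: adjoining the constant symbols in $\mathcal{O}$ does not change congruence lattices or the commutator, so $\Ovar{V}$ is again a congruence modular variety, and $d$ — being a term in the original language — is still a difference term for $\Ovar{V}$; in particular \eqref{eq-diffterm1}, \eqref{eq-diffterm2} and the consequence \eqref{eq-diffterm3} remain valid for $\mathcal{O}$-algebras and for \emph{$\mathcal{O}$-terms} $t$ (the proof of Proposition~5.7 of \cite{freese-mckenzie} uses only the difference-term identities and induction on term structure, and a new constant symbol is handled trivially). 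The point of passing to $\Ovar{V}$ is that every $\alpha$-class of an $\mathcal{O}$-algebra $\al C$ automatically contains the interpretation $o^{\al C}$ of any $o\in\mathcal{O}$ lying in it, so there is a canonical basepoint turning the intrinsic affine structure on the class into an honest abelian group.

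Next I would establish the group structure on $o^{\al C}/\alpha$. Since $[\alpha,\alpha]=0$, \eqref{eq-diffterm1} and \eqref{eq-diffterm2} say exactly that $d^{\al C}$ restricts to a Maltsev operation on each $\alpha$-class, and \eqref{eq-diffterm3} with $t=d$ says $d^{\al C}$ commutes with itself on that class (it is entropic). A self-commuting Maltsev operation defines an abelian group once a zero is fixed: taking the zero to be $o^{\al C}$, the addition and negation are precisely $u+_o v=d^{\al C}(u,o^{\al C},v)$ and $-_o u=d^{\al C}(o^{\al C},u,o^{\al C})$. Commutativity, associativity, the identity law $u+_o o^{\al C}=u$ and the inverse law $u+_o(-_o u)=o^{\al C}$ are then a routine calculation from $d^{\al C}(x,y,y)=x$, $d^{\al C}(x,x,y)=y$ and entropicity, giving $\Calpha{o}$. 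Identity~(i) is proved by the same manipulations: using the Maltsev laws and entropicity one rewrites $d^{\al C}(u,v,w)=d^{\al C}(d^{\al C}(u,v,o^{\al C}),o^{\al C},w)$ and, separately, $d^{\al C}(u,o^{\al C},d^{\al C}(o^{\al C},v,o^{\al C}))=d^{\al C}(u,v,o^{\al C})$, and combining these two equalities yields $d^{\al C}(u,v,w)=u-_o v+_o w$.

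For part~(ii), let $r(x_1,\dots,x_k)$ be an $\mathcal{O}$-term with $r^{\al C}(o_1^{\al C},\dots,o_k^{\al C})=o^{\al C}$. Applying \eqref{eq-diffterm3} with $t=r$, with $u_i,v_i$ ranging over $o_i^{\al C}/\alpha$ and $w_i=o_i^{\al C}$, gives
\[
r^{\al C}\bigl(d^{\al C}(u_1,o_1^{\al C},v_1),\dots,d^{\al C}(u_k,o_k^{\al C},v_k)\bigr)
= d^{\al C}\bigl(r^{\al C}(u_1,\dots,u_k),\,o^{\al C},\,r^{\al C}(v_1,\dots,v_k)\bigr),
\]
that is, $r^{\al C}$ carries the coordinatewise sum to the sum in $\Calpha{o}$, so its restriction is a group homomorphism $\Calpha{o_1}\times\dots\times\Calpha{o_k}\to\Calpha{o}$. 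The assertions about the unary terms $r_i$ and the decomposition \eqref{eq-linDecomp} then follow from the elementary fact that a homomorphism out of a finite direct product of abelian groups is the sum of its restrictions to the coordinate subgroups, each such restriction being again a homomorphism; here that restriction in coordinate $i$ is exactly $r_i^{\al C}\restr_{o_i^{\al C}/\alpha}$, because $o_j^{\al C}$ is the zero of $\Calpha{o_j}$ for $j\neq i$.

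The only genuine point requiring care is the first one — ensuring that the difference-term machinery of \cite[Chapter~9]{freese-mckenzie}, in particular \eqref{eq-diffterm3}/Corollary~5.8, is legitimately available for $\mathcal{O}$-terms rather than only for terms of $\var{V}$ — and this is dispatched by the observation that $\Ovar{V}$ is a congruence modular variety with the same congruences and commutator for which $d$ is still a difference term. Everything after that is the bookkeeping of transporting a Maltsev/entropic structure to a group, which is mechanical.
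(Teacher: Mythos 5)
Your proposal is correct and follows the same route as the paper: the paper's proof is just the citation ``If we apply Corollary~5.8 (along with Lemma~5.6) from [Freese--McKenzie] to $\mathcal{O}$-algebras in $\Ovar{V}$, we get the following,'' and your write-up simply fills in why that application is legitimate (passing to $\Ovar{V}$ leaves congruences, commutator, and the difference term untouched, so \eqref{eq-diffterm1}--\eqref{eq-diffterm3} persist for $\mathcal{O}$-terms) and then unwinds Corollary~5.8 via the Maltsev-plus-entropic computations. The one place you are more careful than the text is in noting that \eqref{eq-diffterm3} applies to $\mathcal{O}$-terms by treating constants as frozen variables of a term in the original language; that observation is exactly what makes the specialization valid, and your argument for it is sound.
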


In Section~5 the following straightforward consequence of 
Lemma~\ref{lm-group}(i) will be useful.

\begin{cor}
\label{cor-dMaltsev}
Under the same assumptions as in Lemma~\ref{lm-group},
if $e$ is an integer that is a multiple of the exponent of
the group $\Calpha{o}$, then
\begin{multline}
\label{eq-dMaltsev}
d^{\al C}(d^{\al C}(\dots d^{\al C}(d^{\al C}(u_1,u_{e+1},u_2),u_{e+1},u_3)
                          \dots),u_{e+1},u_e)
=u_1 +_o \dots +_o u_{e+1}\\
\quad\text{for all $u_1,\dots,u_{e+1}\in o^{\al C}/\alpha$.}
\end{multline}
\end{cor}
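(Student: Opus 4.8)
The plan is to deduce Corollary~\ref{cor-dMaltsev} directly from Lemma~\ref{lm-group}(i) together with the fact that, in the abelian group $\Calpha{o}$, an $(e{+}1)$-fold sum is unaffected by the torsion elements of order dividing $e$. First I would unwind the left-hand side of \eqref{eq-dMaltsev} using \eqref{eq-diffterm1} and part~(i) of Lemma~\ref{lm-group}, which says $d^{\al C}(u,v,w)=u-_ov+_ow$ on the $\alpha$-class $o^{\al C}/\alpha$. Since all the arguments $u_1,\dots,u_{e+1}$ lie in $o^{\al C}/\alpha$ and $d^{\al C}$ preserves that class (it is an $\mathcal O$-term operation sending $(o^{\al C},o^{\al C},o^{\al C})$ to $o^{\al C}$), every intermediate value in the nested expression also lies in $o^{\al C}/\alpha$, so part~(i) applies at each stage.

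Next I would evaluate the nesting from the inside out. The innermost term is $d^{\al C}(u_1,u_{e+1},u_2)=u_1-_ou_{e+1}+_ou_2$. Feeding this into the next $d^{\al C}(\,\cdot\,,u_{e+1},u_3)$ gives $(u_1-_ou_{e+1}+_ou_2)-_ou_{e+1}+_ou_3=u_1+_ou_2+_ou_3-_o2u_{e+1}$, and by an obvious induction, after $j$ applications one obtains $u_1+_o\dots+_ou_{j+1}-_o(j-1)u_{e+1}$. Since the construct in \eqref{eq-dMaltsev} has $u_e$ as the last "outer" argument, there are $e-1$ nested applications of $d^{\al C}$, so the final value is $u_1+_o\dots+_ou_e-_o(e-1)u_{e+1}=u_1+_o\dots+_ou_{e+1}-_oe\cdot u_{e+1}$.

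Finally, because $u_{e+1}\in\Calpha{o}$ and $e$ is a multiple of the exponent of the group $\Calpha{o}$, we have $e\cdot u_{e+1}=o^{\al C}$, the zero element, so the subtracted term vanishes and the expression collapses to $u_1+_o\dots+_ou_{e+1}$, as claimed. I would present the induction step as a one-line computation rather than a formal induction, since it is entirely routine group arithmetic in $\Calpha{o}$.

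I do not expect any serious obstacle here: the only point requiring a word of care is the closure remark—that every intermediate $d^{\al C}$-value stays inside the single $\alpha$-class $o^{\al C}/\alpha$, so that Lemma~\ref{lm-group}(i) is legitimately applicable throughout—and this is immediate from $d^{\al C}$ being an $\mathcal O$-term operation with $d^{\al C}(o^{\al C},o^{\al C},o^{\al C})=o^{\al C}$ (equivalently, from \eqref{eq-diffterm1}). Everything else is bookkeeping with the abelian group operations $+_o$ and $-_o$.
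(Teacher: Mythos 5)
Your argument is correct and follows essentially the same route as the paper: unwind the nested $d$'s via Lemma~\ref{lm-group}(i) to get $u_1 +_o \cdots +_o u_e -_o (e-1)u_{e+1}$ and then observe that $e$ annihilates $u_{e+1}$ in $\Calpha{o}$. One small off-by-one slip in your intermediate claim: after $j$ nested applications you obtain $u_1 +_o \cdots +_o u_{j+1} -_o\, j\,u_{e+1}$, not $(j-1)u_{e+1}$; this is consistent with (and is needed to produce) the correct final value $u_1 +_o \cdots +_o u_e -_o (e-1)u_{e+1}$ that you do state.
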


\begin{proof}
By Lemma~\ref{lm-group}(i) the left hand side of \eqref{eq-dMaltsev}
equals $u_1 +_o u_2 +_o u_3 +_o \dots +_o u_e -_o (e-1)u_{e+1}$,
which equals the right hand side of \eqref{eq-dMaltsev}, because
$e$ is a multiple of the exponent of the group $\Calpha{o}$.
\end{proof}

Next we define the matrix ring $\al R(\var{V},\mathcal{O})$
of the variety $\var{V}$ (where the size of the matrices is 
$|\mathcal{O}|$). Let $\al F$ be the free algebra in 
$\var{V}$ with free generating set $\{x\}\cup\mathcal{O}$.
In other words, $\al F$ is the algebra of all $\mathcal{O}$-terms 
of $\Ovar{V}$ in one variable $x$ (modulo the identities of $\Ovar{V}$).
For each $o\in\mathcal{O}$, let $\epsilon_o$ denote the unique 
endomorphism of $\al F$ that maps $x$ to $o$ and fixes all elements of 
$\mathcal{O}$; that is, $\epsilon_o$ maps every $\mathcal{O}$-term
$r=r(x)$ to $r(o)$. 
Furthermore, let $\gamma_o:=[\theta_o,\theta_o]$ where 
$\theta_o:=\ker(\epsilon_o)$. 
Now for any $o,o'\in\mathcal{O}$, let
\[
\bar H_{o,o'}:=\{r/\gamma_{o}:r=r(x)\in\al F,\ r(o)=o'\}.
\]
Then each $\bar H_{o,o'}$ has a natural abelian group structure with zero element
$o'/\gamma_{o}$ and addition and inversion defined by
\begin{align*}
r/\gamma_{o}+s/\gamma_{o}&{}:=d(r,o',s)/\gamma_{o}
\quad\text{and}\\
-(r/\gamma_{o})&{}:=d(o',r,o')/\gamma_{o}
\quad
\text{for all $r/\gamma_{o},s/\gamma_{o}\in \bar H_{o,o'}$.}
\end{align*}
Moreover, composition of 
$\mathcal{O}$-terms yields a multiplication that is defined by
\[
(t/\gamma_{o'})(r/\gamma_{o}):=t(r(x))/\gamma_{o}
\quad
\text{for all $t/\gamma_{o'}\in \bar H_{o',o''}$ and
$r/\gamma_{o}\in \bar H_{o,o'}$.}
\]
It can be also shown that multiplication distributes over addition.
Thus, the
$\mathcal{O}\times\mathcal{O}$
matrices $(m_{o',o})$ with the properties that 
\begin{itemize}
\item
the entry $m_{o',o}$ in position $(o',o)$
(i.e., row $o'$, column $o$) satisfies the condition
$m_{o',o}\in \bar H_{o,o'}$
for every $o,o'\in\mathcal{O}$, and 
\item
each column contains only finitely 
many nonzero entries 
\end{itemize}
form a ring for the ordinary matrix operations. 
This is the matrix ring $\al R(\var{V},\mathcal{O})$
associated to $\var{V}$. 

The proofs of all claims made throughout the definition 
above and the lemma below can be found at the beginning of 
Chapter~9 (before Theorem~9.4) in \cite{freese-mckenzie}.

\begin{lm}[From \cite{freese-mckenzie}]
\label{lm-module}
If $\alpha$ is an abelian congruence of 
an $\mathcal{O}$-algebra $\al C$ in $\Ovar{V}$, then
the direct sum
\begin{align*}
\qquad
\MCalpha
:={}&\bigoplus_{o\in\mathcal{O}}\Calpha{o}\\
={}&\{(a_o)_{o\in\mathcal{O}}\in\prod_{o\in\mathcal{O}}\Calpha{o}:
\text{$a_o=o^{\al C}$ for all but finitely many $o$'s}\}
\end{align*}
of the abelian groups $\Calpha{o}$ $(o\in O)$ is 
an $\al R(\var{V},\mathcal{O})$-module 
for the ordinary matrix-vector multiplication if
multiplication for the entries is defined as follows:
for $a\in\Calpha{o}$ and $m_{o',o}=r/\gamma_{o}\in\bar H_{o,o'}$
$(\text{with } r=r(x)\in\al F,\ r(o)=o')$, 
\begin{equation}
\label{eq-moduleMult}
m_{o',o}a=(r/\gamma_{o}) a:=r^{\al C}(a).
\end{equation}
\end{lm}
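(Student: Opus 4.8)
The plan is to verify the left $\al{R}(\var{V},\mathcal O)$-module axioms for $\MCalpha$ one at a time, reducing each to Lemma~\ref{lm-group}, the defining identities \eqref{eq-diffterm1}--\eqref{eq-diffterm3} of the difference term, and the definitions of the operations of $\al{R}(\var{V},\mathcal O)$. Since $\MCalpha$ is by construction a direct sum of abelian groups, its additive structure needs no comment; and because every column of a matrix in $\al{R}(\var{V},\mathcal O)$ and every vector in $\MCalpha$ is finitely supported while (as noted below) the scalar action sends zeros to zeros, every sum occurring in the ordinary matrix--vector product is finite and the product again lies in $\MCalpha$. Hence it suffices to prove (a)~that the action \eqref{eq-moduleMult} of a single entry is well defined, (b)~that it is additive in the vector argument, additive in the entry, and compatible with composition of entries, and (c)~that the identity matrix acts as the identity; the genuine module identities then follow by routine manipulation of finite sums.

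I would dispatch the easy parts first. If $r=r(x)\in\al F$ satisfies $r(o)=o'$ and $a\in o^{\al C}/\alpha$, then $r^{\al C}(o^{\al C})=o'^{\al C}$ together with $a\equiv_\alpha o^{\al C}$ gives $r^{\al C}(a)\in o'^{\al C}/\alpha$, and $r^{\al C}$ carries the zero $o^{\al C}$ of $\Calpha{o}$ to the zero $o'^{\al C}$ of $\Calpha{o'}$; this is exactly what makes the matrix action land in $\MCalpha$. Additivity in the vector argument, $m_{o',o}(u+_o v)=m_{o',o}u +_{o'} m_{o',o}v$, is precisely the assertion of Lemma~\ref{lm-group}(ii) that $r^{\al C}\restr_{o^{\al C}/\alpha}\colon\Calpha{o}\to\Calpha{o'}$ is a group homomorphism. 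Additivity in the entry reduces, using $(r/\gamma_o)+(s/\gamma_o)=d(r,o',s)/\gamma_o$ and $u+_{o'}v=d^{\al C}(u,o'^{\al C},v)$, to the identity $d(r,o',s)^{\al C}(a)=d^{\al C}\bigl(r^{\al C}(a),o'^{\al C},s^{\al C}(a)\bigr)$, which is just composition of term operations (and likewise for additive inverses); compatibility with the multiplication of $\al{R}(\var{V},\mathcal O)$ reduces, via $(t/\gamma_{o'})(r/\gamma_o)=t(r(x))/\gamma_o$, to $\bigl(t(r(x))\bigr)^{\al C}(a)=t^{\al C}\bigl(r^{\al C}(a)\bigr)$. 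Finally, the $(o,o)$-entry of the identity matrix is the class of the term $x$, which acts as the identity on $o^{\al C}/\alpha$, while each off-diagonal entry of the identity matrix is the class of a constant term $o'$, whose action sends every $a$ to the zero $o'^{\al C}$; so the identity matrix acts trivially.

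The one step that is not bookkeeping is (a), the well-definedness of \eqref{eq-moduleMult}: if $r,s\in\al F$ satisfy $r(o)=s(o)=o'$ and $r\equiv_{\gamma_o}s$, I must show $r^{\al C}(a)=s^{\al C}(a)$ for every $a\in o^{\al C}/\alpha$. The argument I would give introduces the homomorphism $h_a\colon\al F\to\al C$ (of reducts to the language of $\var{V}$) sending the free generator $x$ to $a$ and each $o''\in\mathcal O$ to $o''^{\al C}$, so that $h_a(r)=r^{\al C}(a)$ for every $\mathcal O$-term $r=r(x)$; it thus suffices to prove $\gamma_o=[\theta_o,\theta_o]\le\ker h_a$. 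Let $g\colon\al F\to\al C$ be the analogous homomorphism with $x\mapsto o^{\al C}$. Then $g=g\circ\epsilon_o$ (both sides fix every $o''$ and send $x$ to $o^{\al C}$), hence $\theta_o=\ker\epsilon_o\le\ker g$. Since $a\equiv_\alpha o^{\al C}$ and $\al F$ is generated by $\{x\}\cup\mathcal O$, we also have $h_a(t)\equiv_\alpha g(t)$ for all $t\in\al F$, so for each $(p,q)\in\theta_o$ one gets $h_a(p)\equiv_\alpha g(p)=g(q)\equiv_\alpha h_a(q)$; that is, $h_a$ maps $\theta_o$ into the restriction $\alpha_{\al E}$ of $\alpha$ to the subalgebra $\al E:=h_a(\al F)\le\al C$. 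Consequently the congruence of $\al E$ generated by the $h_a$-image of $\theta_o$ is contained in $\alpha_{\al E}$, which is abelian, being the restriction of the abelian congruence $\alpha$ to $\al E$. Since the modular commutator is preserved by surjective homomorphisms and $h_a\colon\al F\to\al E$ is surjective, $[\theta_o,\theta_o]$ is mapped by $h_a$ into the commutator of that generated congruence, hence into $[\alpha_{\al E},\alpha_{\al E}]=0$; equivalently $\gamma_o\le\ker h_a$, as required. I expect this well-definedness step — and in particular the use of abelianness of $\alpha$ itself, not merely of $\theta_o/\gamma_o$ — to be the main obstacle; once it is in place, everything else is the finite-sum bookkeeping sketched above, exactly as carried out at the start of Chapter~9 of \cite{freese-mckenzie}.
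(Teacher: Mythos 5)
The paper itself does not prove this lemma; it explicitly refers the reader to the start of Chapter~9 (before Theorem~9.4) of Freese--McKenzie, so there is no in-paper proof to compare against. Your argument is correct and is essentially the argument that appears there: the bookkeeping for finite supports and for additivity/composition of entries is routine once one observes the scalar action carries zeros to zeros, and the genuine content is the well-definedness claim $\gamma_o\le\ker h_a$ for $a\in o^{\al C}/\alpha$, which you establish correctly by noting $g=g\circ\epsilon_o$ forces $\theta_o\le\ker g$, that $h_a$ and $g$ are $\alpha$-related pointwise so $h_a$ carries $\theta_o$ into the (abelian) restriction $\alpha_{\al E}$, and then invoking the homomorphism property of the modular commutator (more precisely, $([\theta_o,\theta_o]\vee\pi)/\pi=[(\theta_o\vee\pi)/\pi,(\theta_o\vee\pi)/\pi]$ with $\pi=\ker h_a$, which is the exact form of the fact you gesture at with ``preserved by surjective homomorphisms'').
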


Now we are ready to state our first theorem which, for every pair
$(\al C,\alpha)$ consisting of an $\mathcal{O}$-algebra $\al C$ in $\Ovar{V}$
and an abelian congruence $\alpha$ of $\al C$, 
relates the subalgebras of $\al C$ to the submodules of the associated
$\al R(\var{V},\mathcal{O})$-module $\MCalpha$.
Recall our convention that the restriction of $\alpha$ to any subalgebra
$\al U$ (subset $U$) of $\al C$ is denoted by $\alpha_{\al U}$ 
($\alpha_U$, respectively).

\begin{thm}
\label{thm-subalg-submod}
Let $\alpha$ be an abelian congruence of an $\mathcal{O}$-algebra 
$\al C$ in $\Ovar{V}$ such that the constants $o^{\al C}$ $(o\in\mathcal{O})$ 
represent all $\alpha$-classes of $\al C$, and let 
$\al E:=\langle\mathcal{O}^{\al C}\rangle$ be the least $\mathcal{O}$-subalgebra 
of $\al C$.
\begin{enumerate}
\item[{\rm(1)}]
If $\al U$ is an $\mathcal{O}$-subalgebra of $\al C$, then 
$\MUalpha$ is an 
$\al R(\var{V},\mathcal{O})$-submodule 
of $\MCalpha$ that contains $\MEalpha$ as a submodule.
\item[{\rm(2)}]
The following conditions on any subset $U$ of $\al C$ 
are equivalent:
\begin{enumerate}
\item[{\rm(a)}]
$U$ is the universe of a subalgebra of $\al C$ that contains $\al E$;
\item[{\rm(b)}]
$U$ is the universe of an $\mathcal{O}$-subalgebra of $\al C$;
\item[{\rm(c)}]
$U$ is closed under all functions 
$t^{\al C}
(x_1,\dots,x_k)\restr_{(o_1^{\al C}/\alpha)\times\dots\times (o_k^{\al C}/\alpha)}$
where $t$ is an $\mathcal{O}$-term and $o_1,\dots,o_k\in\mathcal{O}$;
\item[{\rm(d)}]
the set
\[
\qquad\quad
\bigoplus_{o\in\mathcal{O}} (o^{\al C}/\alpha_U):=
\{(u_o)_{o\in\mathcal{O}}\in\prod_{o\in\mathcal{O}}(o^{\al C}/\alpha_U):
\text{$u_o=o^{\al C}$ for all but finitely many $o$'s}\}
\]
is the universe of an $\al R(\var{V},\mathcal{O})$-submodule 
of $\MCalpha$ that contains \break
$\MEalpha$.
\end{enumerate}
\item[{\rm(3)}]
If $U$ is the universe of an $\mathcal{O}$-subalgebra 
$\al U$ of $\al C$, then 
the $\al R(\var{V},\mathcal{O})$-submodule of
$\MCalpha$ described in {\rm(d)}
is $\MUalpha$.
\end{enumerate}
\end{thm}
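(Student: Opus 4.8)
The plan is to prove the three parts in the order (1), (2), (3), since part (2) will rest on part (1) and the earlier structural lemmas, and part (3) will be essentially a bookkeeping consequence of the proof of (2). Throughout, the crucial leverage is Lemma~\ref{lm-group}(ii): the $\mathcal{O}$-term operations act linearly (as group homomorphisms, after the additive identification of Lemma~\ref{lm-group}) between $\alpha$-classes. Since every $\alpha$-class is represented by some constant $o^{\al C}$, every element of $\al C$ lies in exactly one group $\Calpha{o}$, and the module $\MCalpha$ is literally the disjoint assembly of these groups. The guiding idea is that ``being closed under the algebra operations, restricted to $\alpha$-classes'' translates, via \eqref{eq-linDecomp} and \eqref{eq-moduleMult}, precisely into ``being closed under the matrix-vector action,'' i.e.\ into being an $\al R(\var{V},\mathcal{O})$-submodule.

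For part (1): given an $\mathcal{O}$-subalgebra $\al U\le\al C$, first observe that since $\al U$ is an $\mathcal{O}$-subalgebra it contains every $o^{\al C}$, hence contains $\al E=\langle\mathcal{O}^{\al C}\rangle$, and $U$ is a union of (sub)classes $o^{\al C}/\alpha_U$. Then $\MUalpha=\bigoplus_{o}\Ualpha{o}$ is visibly a subset of $\MCalpha$ with each $\Ualpha{o}\le\Calpha{o}$ a subgroup (closure under $+_o,-_o$ follows because $d^{\al C}$, being a term, preserves $U$ and maps into the right $\alpha$-class). To see it is a \emph{sub}module one must check closure under multiplication by each matrix unit $m_{o',o}=r/\gamma_{o}\in\bar H_{o,o'}$: this is exactly $a\mapsto r^{\al C}(a)$ by \eqref{eq-moduleMult}, and $r^{\al C}$ is an $\mathcal{O}$-term operation sending $o^{\al C}/\alpha$ into $o'^{\al C}/\alpha$, hence sends $U\cap(o^{\al C}/\alpha)$ into $U\cap(o'^{\al C}/\alpha)$ because $U$ is closed under $\mathcal{O}$-term operations. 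Finally $\MEalpha\le\MUalpha$ because $\al E\le\al U$.

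For part (2): I would argue (a)$\Rightarrow$(b)$\Rightarrow$(c)$\Rightarrow$(d)$\Rightarrow$(b), and separately (b)$\Rightarrow$(a). The step (a)$\Rightarrow$(b) is immediate since a subalgebra of $\al C$ containing $\al E$ already contains all the constants $o^{\al C}$, so it is an $\mathcal{O}$-subalgebra; (b)$\Rightarrow$(a) is equally trivial. (b)$\Rightarrow$(c) holds because an $\mathcal{O}$-subalgebra is closed under all $\mathcal{O}$-term operations, in particular under their restrictions to products of $\alpha$-classes. The substantive implication is (c)$\Rightarrow$(d): assuming $U$ is closed under all such restricted $\mathcal{O}$-term operations, one shows that $\bigoplus_o(o^{\al C}/\alpha_U)$ is a subgroup (use that $+_o, -_o$ are given by $d^{\al C}$, an $\mathcal{O}$-term, restricted to an $\alpha$-class, so $U$ is closed under them; note $o^{\al C}\in U$ since, e.g., $o^{\al C}=d^{\al C}(o^{\al C},o^{\al C},o^{\al C})$ — here one wants $U$ nonempty, which comes from assumption (c) quantifying over $\mathcal{O}$-terms including constants, so $o^{\al C}\in U$ for all $o$) and is closed under each $m_{o',o}a=r^{\al C}(a)$, which is again a restricted $\mathcal{O}$-term operation. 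Containment of $\MEalpha$ follows since each $o^{\al C}\in U$ forces $\Ealpha{o}\subseteq o^{\al C}/\alpha_U$, the point being that $\al E$-elements in class $o$ are $\mathcal{O}$-term values built from constants, hence lie in $U$. Then (d)$\Rightarrow$(b): a submodule $M$ of the described form, being closed under all matrix units, is closed under all $r^{\al C}$ for $\mathcal{O}$-terms $r$, and by the decomposition \eqref{eq-linDecomp} a general $\mathcal{O}$-term operation $r^{\al C}(u_1,\dots,u_k)=r_1^{\al C}(u_1)+_o\cdots+_o r_k^{\al C}(u_k)$ is a sum of such, with $+_o$ the module addition; hence the underlying set $U$ is closed under all restricted $\mathcal{O}$-term operations and contains the constants, so it is an $\mathcal{O}$-subalgebra. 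Part (3) then just records that the submodule produced in (c)$\Rightarrow$(d) for $U$ the universe of $\al U$ is, classwise, $\bigoplus_o \Ualpha{o}=\MUalpha$, which is immediate from the definitions once (1) identifies $\MUalpha$ as a submodule.

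The main obstacle I anticipate is the bookkeeping around the additive structure: one must be careful that the group operations $+_o,-_o$ on different $\alpha$-classes are genuinely induced by the single term $d^{\al C}$, that the matrix action \eqref{eq-moduleMult} is well-defined independently of the representative $\mathcal{O}$-term $r$ with $r(o)=o'$ (this is exactly why $\gamma_o=[\theta_o,\theta_o]$ appears, and one invokes \eqref{eq-diffterm2}/\eqref{eq-diffterm3} and Lemma~\ref{lm-group}), and that ``closed under restricted $\mathcal{O}$-term operations'' is equivalent to ``closed under all matrix units plus the additions $+_o$.'' The reduction of a $k$-ary $\mathcal{O}$-term operation to a $+_o$-sum of unary ones via \eqref{eq-linDecomp} is the linchpin that makes (d)$\Rightarrow$(b) go through, and verifying that this decomposition respects the submodule (rather than merely the ambient module) is the one place where a short but genuine argument — as opposed to pure definition-chasing — is required.
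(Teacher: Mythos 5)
Your plan for parts (1), (3), and the easy implications of part (2) matches the paper's proof closely. The gap is in the substantive implication you call (d)\,$\Rightarrow$\,(b), where you invoke equation \eqref{eq-linDecomp} for a ``general $\mathcal{O}$-term operation.'' Lemma~\ref{lm-group}(ii) gives \eqref{eq-linDecomp} only for $\mathcal{O}$-terms $r$ that are \emph{normalized} in the sense $r(o_1,\dots,o_k)=o$ in the free algebra; for an arbitrary $\mathcal{O}$-term $t$ with $t^{\al C}(\bar{o}^{\al C})\in o^{\al C}/\alpha$ but $t(\bar{o})\neq o$, you cannot apply it directly, and the ``linear'' decomposition fails because there is a nonzero constant term. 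The paper addresses exactly this in Claim~\ref{clm-moduleops}: one translates $t$ to $r(\bar{x}):=d\bigl(t(\bar{x}),t(\bar{o}),o\bigr)$, which does satisfy $r(\bar{o})=o$ by \eqref{eq-diffterm1}; applying \eqref{eq-linDecomp} to $r$ and translating back yields the \emph{affine} formula
\[
t^{\al C}(u_1,\dots,u_k)=m_1u_1 +_o\cdots+_o m_ku_k +_o t^{\al C}(\bar{o}^{\al C}),
\]
with a constant term $t^{\al C}(\bar{o}^{\al C})$ lying in $\al E$.

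This is not a cosmetic point: the constant term is the entire reason the theorem's hypothesis requires the submodule to contain $\MEalpha$. Your proposal treats $\MEalpha\subseteq M$ as merely supplying the interpretations $o^{\al C}$, but in fact it is what allows the shift by $t^{\al C}(\bar{o}^{\al C})$ to stay inside $U$. Your stated equivalence ``closed under restricted $\mathcal{O}$-term operations $\Leftrightarrow$ closed under matrix units plus the additions $+_o$'' is false as written; the correct right-hand side must also include closure under adding the constants coming from $\MEalpha$. Once you insert the translation step (i.e.\ reprove the paper's Claim~\ref{clm-moduleops}) and then carry the element $t^{\al C}(\bar{o}^{\al C})$ through the module computation using $\MEalpha\subseteq M$, your argument for (d)\,$\Rightarrow$\,(b) (equivalently (d)\,$\Rightarrow$\,(c)) closes, and the rest of the proof goes through as you outlined.
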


\begin{proof}{}
For (1),
assume that $\al U$ is an $\mathcal{O}$-subalgebra
of $\al C$; in particular, $o^{\al U}=o^{\al C}$ for all $o\in\mathcal{O}$.
Then $\alpha_{\al U}$ 
is an abelian congruence of $\al U$, and 
the universe of $\MUalpha$ is 
a subset of the universe of 
$\MCalpha$.
The definition of the modules 
$\MUalpha$ and $\MCalpha$
shows that they have the same zero element 
$(o^{\al U})_{o\in\mathcal{O}}=(o^{\al C})_{o\in\mathcal{O}}$,
the group operations in the two modules
are determined, in each coordinate, by the 
same $\mathcal{O}$-terms $d(x,o,y)$, $d(o,x,o)$ 
(see Lemma~\ref{lm-group}),
and for every matrix in $\al R(\var{V},\mathcal{O})$,
multiplication for the entries in the two modules are determined by the same
$\mathcal{O}$-terms (see Lemma~\ref{lm-module}).
Since $\al U$ is an $\mathcal{O}$-subalgebra of $\al C$,
this implies that $\MUalpha$ is an 
$\al R(\var{V},\mathcal{O})$-submodule of $\MCalpha$.

$\al E$ is an $\mathcal{O}$-subalgebra of $\al U$,
therefore we can repeat the argument in the preceding paragraph
for $\al U$ and $\al E$ (in place of $\al C$ and $\al U$) to
conclude that $\MEalpha$ is an
$\al R(\var{V},\mathcal{O})$-submodule of
$\MUalpha$.

To verify (2) and (3),
we will prove the equivalence of the first three conditions in (2) by showing
that (a) $\Leftrightarrow$ (b) and (b) $\Leftrightarrow$ (c). To establish that
the fourth condition is also equivalent to them we will prove that 
(b) $\Rightarrow$ (d) and (d) $\Rightarrow$ (c). Statement (3) will 
be verified along with the proof of the implication (b) $\Rightarrow$ (d).

If $U$ is the universe of a subalgebra $\al U$ of $\al C$
containing $\al E=\langle\mathcal{O}^{\al C}\rangle$, then
we can define $o^{\al U}:=o^{\al C}$ for every 
$o\in\mathcal{O}$ to make $\al U$ an $\mathcal{O}$-subalgebra of $\al C$.
This proves (a) $\Rightarrow$ (b). The converse is a tautology, so 
(a) $\Leftrightarrow$ (b) is proved.

$U$ is the universe of an $\mathcal{O}$-subalgebra of $\al C$ if and only if
$U$ is closed under all functions $t^{\al C}$ induced by $\mathcal{O}$-terms $t$.
Since our assumption is that 
the constants $o^{\al C}$ $(o\in\mathcal{O})$ 
represent all $\alpha$-classes of $\al C$, it follows that for every 
$\mathcal{O}$-term $t$, $U$ is closed under the term function $t^{\al C}$
if and only if it is closed under all its restrictions described in (c).
This proves (b) $\Leftrightarrow$ (c).

Now assume that $U$ is the universe of an $\mathcal{O}$-subalgebra $\al U$
of $\al C$; 
in particular, $o^{\al U}=o^{\al C}$ for all $o\in\mathcal{O}$.
Then $\alpha_U=\alpha_{\al U}$  
is an abelian congruence of $\al U$. The definition of the
$\al R(\var{V},\mathcal{O})$-module 
$\MUalpha$
shows that its universe is exactly
$\bigoplus_{o\in\mathcal{O}} (o^{\al C}/\alpha_U)$.
Therefore statement (1) proves that 
$\bigoplus_{o\in\mathcal{O}} (o^{\al C}/\alpha_U)$ is indeed 
the underlying set of an $\al R(\var{V},\mathcal{O})$-submodule of
$\MCalpha$ that contains $\MEalpha$ as a submodule, namely 
$\MUalpha$.
This finishes the proof of both (b) $\Rightarrow$ (d) and (3).

Finally, we want to argue that
(d) $\Rightarrow$ (c).
We will start by proving a claim which is independent of $U$.

\begin{clm}
\label{clm-moduleops}
Let $\al C$ and $\alpha$ be as in the theorem.
For arbitrary $\mathcal{O}$-term $t=t(x_1,\dots,x_k)$ and constant symbols 
$o_1,\dots,o_k\in\mathcal{O}$,
\begin{enumerate}
\item[{\rm(1)}]
$t^{\al C}(o_1^{\al C},\dots,o_k^{\al C})\in o^{\al C}/\alpha$ 
for some $o\in\mathcal{O}$, and
\item[{\rm(2)}]
for every such $o\in\mathcal{O}$ there exist
$m_i=r_i/\gamma_{o_i}\in\bar H_{o_i,o}$ 
$($with $r_i=r_i(x)\in\al F$, $r_i(o_i)=o${}$)$ for 
$i=1,\dots,k$ such that
\begin{multline}
\label{eq-moduleops}
t^{\al C}(u_1,\dots,u_k)=m_1u_1 +_o \dots +_o m_ku_k +_o 
t^{\al C}(o_1^{\al C},\dots,o_k^{\al C})\\
\text{for all $(u_1,\dots,u_k)\in 
(o_1^{\al C}/\alpha)\times\dots\times(o_k^{\al C}/\alpha)$.}
\end{multline}
\end{enumerate}
\end{clm}

\noindent
{\it Proof of Claim~\ref{clm-moduleops}.}
To simplify notation, let $\bar{x}:=(x_1,\dots,x_k)$ and 
$\bar{o}:=(o_1,\dots,o_k)$.

(1) holds, because
our assumption that the constants $o^{\al C}$ $(o\in\mathcal{O})$ 
represent all $\alpha$-classes of $\al C$ implies that the element
$t^{\al C}(\bar{o}^{\al C})$ of $\al C$ is in $o^{\al C}/\alpha$
for some $o\in\mathcal{O}$. 

To prove (2)
let us fix such an $o\in\mathcal{O}$ for the rest of the argument.
Since $t=t(\bar{x})$ is an $\mathcal{O}$-term, so is 
\begin{equation}
\label{eq-translation}
r(\bar{x}):=d(t(\bar{x}),t(\bar{o}),o). 
\end{equation}
As in Lemma~\ref{lm-group}(ii), consider the unary $\mathcal{O}$-terms
$r_i(x):=r(o_1,\dots,o_{i-1},x,o_{i+1},\dots,o_k)$ for each $i$ ($1\le i\le k$).
Since $d$ satisfies the identity $d(x,x,y)=y$ (see \eqref{eq-diffterm1}),
we get that $r(\bar{o})=o$, and hence
$r_i(o_i)=o$ for every $i$ ($1\le i\le k$).
This shows that $r$ satisfies the assumptions of Lemma~\ref{lm-group}(ii),
and $m_i:=r_i/\gamma_{o_i}$ is a member of $\bar H_{o_i,o}$
for every $i$ ($1\le i\le k$).
Thus, first using equality \eqref{eq-linDecomp} 
from Lemma~\ref{lm-group}(ii), and then
equality \eqref{eq-moduleMult} from Lemma~\ref{lm-module} we obtain that
\begin{multline}
\label{eq-moduleops-almost}
r^{\al C}(u_1,\dots,u_k)
=r_1^{\al C}(u_1) +_o \dots +_o  r_k^{\al C}(u_k)
=m_1u_1 +_o \dots +_o m_ku_k\\
\text{for all $(u_1,\dots,u_k)\in 
(o_1^{\al C}/\alpha)\times\dots\times(o_k^{\al C}/\alpha)$.}
\end{multline}
On the other hand, since 
$r^{\al C}(u_1,\dots,u_k)\in o^{\al C}/\alpha$
for all $(u_1,\dots,u_k)\in 
(o_1^{\al C}/\alpha)\times\dots\times(o_k^{\al C}/\alpha)$
(cf.\ Lemma~\ref{lm-group}(ii)), we can apply 
first \eqref{eq-translation}, and then the equality in Lemma~\ref{lm-group}(i) 
to get that
\begin{multline*}
r^{\al C}(u_1,\dots,u_k)
=d^{\al C}(t^{\al C}(u_1,\dots,u_k),t^{\al C}(\bar{o}^{\al C}),o^{\al C})
=t^{\al C}(u_1,\dots,u_k) -_o t^{\al C}(\bar{o}^{\al C})\\
\text{for all $(u_1,\dots,u_k)\in 
(o_1^{\al C}/\alpha)\times\dots\times(o_k^{\al C}/\alpha)$.}
\end{multline*}
The last displayed equality shows that by adding $t^{\al C}(\bar{o}^{\al C})$
to both sides of \eqref{eq-moduleops-almost} 
in the group $\Calpha{o}$ we get the
equality \eqref{eq-moduleops} we wanted to prove.
\hfill$\diamond$

\medskip

To prove the implication (d) $\Rightarrow$ (c), assume that 
(d) holds for $U$, that is, the subset 
$S_U:=\bigoplus_{o\in\mathcal{O}} ({o}^{\al C}/\alpha_U)$
of $\MCalpha$ is the universe of an
$\al R(\var{V},\mathcal{O})$-submodule of 
$\MCalpha$, and contains 
all elements of $\MEalpha$. 
In particular, the zero element $(o^{\al C})_{o\in\mathcal{O}}$ of 
$\MCalpha$ belongs to $S_U$, which implies that
$\mathcal{O}^{\al C}\subseteq U$.

Let $t=t(x_1,\dots,x_k)$ be an arbitrary $\mathcal{O}$-term, 
and let $o_1,\dots,o_k\in\mathcal{O}$.
We want to show that $U$ is closed under the function
$t^{\al C}
(x_1,\dots,x_k)\restr_{(o_1^{\al C}/\alpha)\times\dots\times (o_k^{\al C}/\alpha)}$.
By Claim~\ref{clm-moduleops}
there exist $o\in\mathcal{O}$ 
and $m_i=r_i/\gamma_{o_i}\in\bar H_{o_i,o}$ 
$($with $r_i=r_i(x)\in\al F$, $r_i(o_i)=o${}$)$ for $i=1,\dots,k$ such that
$t^{\al C}(o_1^{\al C},\dots,o_k^{\al C})\in o^{\al C}/\alpha$ and
\eqref{eq-moduleops} holds.
Let us fix an arbitrary tuple 
$(u_1,\dots,u_k)\in
(o_1^{\al C}/\alpha)\times\dots\times (o_k^{\al C}/\alpha)$
that belongs to $U$; that is,
$(u_1,\dots,u_k)\in
(o_1^{\al C}/\alpha_U)\times\dots\times (o_k^{\al C}/\alpha_U)$.
Furthermore, let
$a:=t^{\al C}(o_1^{\al C},\dots,o_k^{\al C})$.
We will be done if we show that the element
\[
b:=t^{\al C}(u_1,\dots,u_k)=m_1u_1 +_o \dots +_o m_ku_k +_o a 
\]
belongs to $U$.

Note that $b=t^{\al C}(u_1,\dots,u_k)\equiv_\alpha 
t^{\al C}(o_1^{\al C},\dots,o_k^{\al C})=a$, therefore
$a\in o^{\al C}/\alpha$ implies that 
$b\in o^{\al C}/\alpha$. Moreover, since
$a=t^{\al C}(o_1^{\al C},\dots,o_k^{\al C})$ is an element of the least
$\mathcal{O}$-subalgebra $\al E$ of $\al C$ (in which
$o^{\al E}=o^{\al C}$), we get from $a\in o^{\al C}/\alpha$ that 
$a\in o^{\al E}/\alpha_{\al E}$.

Now, for each $i$ ($1\le i\le k$), 
let $\tilde{m}_i$ denote the 
$\mathcal{O}\times\mathcal{O}$ matrix which has $m_i$ in position $(o,o_i)$
and zeros (i.e., $o''/\gamma_{o'}$) in all other positions $(o'',o')$,
and let $\tilde{u}_i$ denote the $\mathcal{O}$-tuple
which has $u_i$ in its $o_i$-th position
and zeros (i.e., ${o'}^{\al C}$) in all other positions $o'\in\mathcal{O}$.
Furthermore, let $\tilde{a}$ and $\tilde{b}$ denote the $\mathcal{O}$-tuples
which have 
$a$ and $b$, respectively, in their
$o$-th positions, and zeros in all other positions.
Since $m_i\in\bar H_{o_i,o}$ for every $i$ ($1\le i\le k$), 
the definition of $\al R(\var{V},\mathcal{O})$ shows that
$\tilde{m}_1,\dots,\tilde{m}_k\in\al R(\var{V},\mathcal{O})$.
Since $u_i\in o_i^{\al C}/\alpha_U$ for every $i$ ($1\le i\le k$) and 
$\mathcal{O}^{\al C}\subseteq U$, 
we get that 
$\tilde{u}_1,\dots,\tilde{u}_k
\in S_U
\subseteq\MCalpha$.
Similarly, $a\in o^{\al E}/\alpha_{\al E}$ implies that
$\tilde{a}
\in \MEalpha$. Therefore, our assumption that $S_U$
contains the elements of $\MEalpha$ yields that
$\tilde{a}\in S_U
\subseteq\MCalpha$.
Finally, we have 
$\tilde{b}
\in\MCalpha$, because
$b\in o^{\al C}/\alpha$.
Since $b=m_1u_1 +_o \dots +_o m_ku_k +_o a$, 
the construction of the tuples $\tilde{u}_i$, $\tilde{a}$,
$\tilde{b}$ and the matrices $\tilde{m}_i$ makes sure that
the equality 
$\tilde{b}
=\tilde{m}_1\tilde{u}_1 + \dots + \tilde{m}_k\tilde{u}_k + \tilde{a}$
holds in the $\al R(\var{V},\mathcal{O})$-module 
$\MCalpha$.
Since 
$\tilde{u}_1,\dots,\tilde{u}_k,\tilde{a}
\in S_U$
and, by our assumption, $S_U$
is closed under the 
$\al R(\var{V},\mathcal{O})$-module operations of 
$\MCalpha$, we get that
$\tilde{b}\in S_U$.
Hence, $b\in o^{\al C}/\alpha_U\subseteq U$.
This completes the proof of (d) $\Rightarrow$ (c), and also the proof of
Theorem~\ref{thm-subalg-submod}
\end{proof}

Our second theorem is the analog of Theorem~9.9 in \cite{freese-mckenzie}
mentioned at the beginning of this section. 
Given a pair $(\al C,\alpha)$ consisting of an $\mathcal{O}$-algebra in
$\Ovar{V}$ and an abelian congruence $\alpha$ of $\al C$, the
theorem establishes
an isomorphism between the lattice of $\mathcal{O}$-subalgebras of 
$\al C$ and an interval
in the submodule lattice of the associated 
$\al R(\var{V},\mathcal{O})$-module $\MCalpha$,
provided all $\alpha$-classes are represented in the module.

\begin{thm}
\label{thm-lattice-iso}
Let $\al C$ be an $\mathcal{O}$-algebra 
in $\Ovar{V}$, and let $\al E:=\langle\mathcal{O}^{\al C}\rangle$ be
its least $\mathcal{O}$-subalgebra.
If $\alpha$ is an abelian congruence of 
$\al C$ such that the constants $o^{\al C}$ $(o\in\mathcal{O})$ 
represent all $\alpha$-classes of $\al C$, then the mapping 
\begin{equation}
\label{eq-lattice-iso}
\al U\mapsto\MUalpha
\end{equation}
is an isomorphism between the 
lattice of all $\mathcal{O}$-subalgebras of $\al C$ 
and the lattice of all $\al R(\var{V},\mathcal{O})$-submodules of 
$\MCalpha$ that contain
$\MEalpha$.
\end{thm}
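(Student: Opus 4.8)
The plan is to deduce the theorem from Theorem~\ref{thm-subalg-submod}, which already does the bulk of the work. By part~(1) of that theorem the assignment \eqref{eq-lattice-iso} is well defined: it carries every $\mathcal{O}$-subalgebra $\al U$ of $\al C$ to an $\al R(\var{V},\mathcal{O})$-submodule $\MUalpha$ of $\MCalpha$ that contains $\MEalpha$, and by part~(3) the underlying set of $\MUalpha$ is $\bigoplus_{o\in\mathcal{O}}(o^{\al C}/\alpha_U)$. Since $\al U\le\al V$ forces $\alpha_U\subseteq\alpha_V$, hence $o^{\al C}/\alpha_U\subseteq o^{\al C}/\alpha_V$ for every $o$, the map is order preserving between the (complete) lattice of $\mathcal{O}$-subalgebras of $\al C$ and the (complete) lattice of $\al R(\var{V},\mathcal{O})$-submodules of $\MCalpha$ lying above $\MEalpha$. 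It therefore suffices to produce an order-preserving inverse, and for that it is enough, given a submodule $M$ with $\MEalpha\le M\le\MCalpha$, to find a subset $U_M\subseteq C$ with $\bigoplus_{o}(o^{\al C}/\alpha_{U_M})=M$; then condition~(d) of Theorem~\ref{thm-subalg-submod} holds for $U_M$, so by (d)$\Rightarrow$(b) the set $U_M$ is the universe of an $\mathcal{O}$-subalgebra $\al U$, and by part~(3) we get $\MUalpha=M$.

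To construct $U_M$ I would use the orthogonal idempotents of the ring $\al R(\var{V},\mathcal{O})$: for each $o\in\mathcal{O}$ let $e_o$ be the matrix with $(o,o)$-entry $x/\gamma_o\in\bar H_{o,o}$ and all other entries zero. Then $e_o\in\al R(\var{V},\mathcal{O})$, and on $\MCalpha$ the element $e_o$ acts as the projection onto the $o$-th summand $\Calpha{o}$, with $a=\sum_{o}e_oa$ (a finite sum, since $a$ has finite support) for every $a$. It follows that every submodule decomposes as $M=\bigoplus_{o}N_o$, where $N_o:=M\cap\Calpha{o}$ is a subgroup of $\Calpha{o}$ containing $o^{\al C}/\alpha_{\al E}$ (the containment because $\MEalpha\le M$). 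Put $U_M:=\bigcup_{o\in\mathcal{O}}N_o$. The key verification is that $o^{\al C}/\alpha_{U_M}=N_o$ for every $o$; granting this, $\bigoplus_o(o^{\al C}/\alpha_{U_M})=\bigoplus_oN_o=M$, and since $M\le M'$ visibly gives $U_M\subseteq U_{M'}$, the assignment $M\mapsto U_M$ is order preserving. Using in addition $\mathcal{O}^{\al C}\subseteq U_M$ (so that the zero of $\MCalpha$, and then all of $\MEalpha$, lies in the module built from $U_M$) together with Theorem~\ref{thm-subalg-submod}(3), one checks that $\al U\mapsto\MUalpha$ and $M\mapsto\langle U_M\rangle$ are mutually inverse; being order-preserving bijections between lattices, they are lattice isomorphisms.

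The step that requires genuine care is the equality $o^{\al C}/\alpha_{U_M}=N_o$, and in particular the inclusion ``$\subseteq$'' when two distinct constants $o,o'$ satisfy $o^{\al C}\equiv_\alpha o'^{\al C}$: the $\alpha$-classes of $o^{\al C}$ and $o'^{\al C}$ then coincide as sets but carry the a priori different group structures $\Calpha{o}$ and $\Calpha{o'}$, and one must show $N_o=N_{o'}$. Here I would apply the ``translation'' matrix $m$ whose $(o,o')$-entry is $d(x,o',o)/\gamma_{o'}$ (which lies in $\bar H_{o',o}$ because $d(o',o',o)=o$ by~\eqref{eq-diffterm1}) and all of whose other entries are zero: from $mM\subseteq M$, from the fact (via Lemma~\ref{lm-group}) that $m$ acts on the common class as translation by $o^{\al C}$ inside the group $\Calpha{o'}$, and from $o^{\al C}\in N_{o'}$ (again using $\MEalpha\le M$), one gets that this translation permutes $N_{o'}$ while carrying it into $N_o$, so $N_{o'}\subseteq N_o$; by symmetry $N_o=N_{o'}$. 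The remaining points — that $N_o$ is a subgroup of $\Calpha{o}$, that $\mathcal{O}^{\al C}\subseteq U_M$ and $E\subseteq U_M$, the ``$\supseteq$'' half of $o^{\al C}/\alpha_{U_M}=N_o$, and that the two composites are the respective identities — are routine given Lemmas~\ref{lm-group} and~\ref{lm-module} and Theorem~\ref{thm-subalg-submod}.
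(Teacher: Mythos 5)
Your proposal is correct and follows essentially the same route as the paper: both reduce to Theorem~\ref{thm-subalg-submod}, both decompose a submodule $M$ into its coordinate pieces $N_o$ via the idempotent matrices $e_o$, both use a translation matrix built from the difference term $d$ to prove $N_o=N_{o'}$ when $o^{\al C}\equiv_\alpha o'^{\al C}$, and both feed $U_M=\bigcup_o N_o$ into the (d)$\Rightarrow$(b) implication of Theorem~\ref{thm-subalg-submod}(2). The only cosmetic difference is that you exhibit an order-preserving inverse directly, whereas the paper verifies that the map is order-preserving and order-reflecting (hence injective) and then proves surjectivity by essentially the same construction.
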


\begin{proof}
It follows from Theorem~\ref{thm-subalg-submod}(1)
that for every $\mathcal{O}$-subalgebra $\al U$ of $\al C$,
$\MUalpha$ is an  
$\al R(\var{V},\mathcal{O})$-submodule of 
$\MCalpha$ that contains
$\MEalpha$.
Let $\al U$ and $\al V$ be arbitrary $\mathcal{O}$-subalgebras of
$\al C$. 
If $\al U\le\al V$, then
for every $o\in\mathcal{O}$, the congruence class 
$o^{\al U}/\alpha_{\al U}=o^{\al C}/\alpha_{\al U}$
in $\al U$ is contained in the corresponding congruence class
$o^{\al V}/\alpha_{\al V}=o^{\al C}/\alpha_{\al V}$
of $\al V$.
This implies that
$\MUalpha
\le\MValpha$,
showing that the map
\eqref{eq-lattice-iso} is order-preserving.
Now suppose that $\al U\not\le\al V$. 
The assumption that the interpretations of the 
constant symbols $o\in\mathcal{O}$ 
represent all $\alpha$-classes of $\al C$,
implies the existence of $o^{\al C}=o^{\al U}=o^{\al V}$
such that $o^{\al U}/\alpha_{\al U}\not\subseteq o^{\al V}/\alpha_{\al V}$.
Thus, 
$\MUalpha
\not\le\MValpha$.
This shows that the map \eqref{eq-lattice-iso} 
preserves and reflects $\le$; hence, in particular, it
is one-to-one.

The surjectivity of the map \eqref{eq-lattice-iso} is proved by
the next claim.

\begin{clm}
\label{clm-allSubmods}
Let $\al C$, $\al E$, and $\alpha$ be as in 
Theorem~\ref{thm-lattice-iso}.
If $\al N$ is an $\al R(\var{V},\mathcal{O})$-submodule of 
$\MCalpha$ that contains $\MEalpha$, then
\begin{enumerate}
\item[{\rm(1)}]
as an abelian group, $\al N=\bigoplus_{o\in\mathcal{O}}\al N_o$
where for every $o\in\mathcal{O}$, $\al N_o$ is the subgroup of 
$\Calpha{o}$ that
consists of the $o$-components of all tuples in $\al N$;
\item[{\rm(2)}]
$N_o=N_{o'}$ whenever $o^{\al C}/\alpha={o'}^{\al C}/\alpha$ 
$(o,o'\in\mathcal{O})$; 
\item[{\rm(3)}]
the union $U:=\bigcup_{o\in\mathcal{O}}N_o$ 
of the universes of the groups $\al N_o$
is the universe of an $\mathcal{O}$-subalgebra $\al U$ of $\al C$, and
$\al N=\MUalpha$.
\end{enumerate}
\end{clm}

\noindent
{\it Proof of Claim~\ref{clm-allSubmods}.}{}
For (1) notice first that
since the group operations in $\MCalpha$  
are performed coordinatewise, it follows immediately
from the definition of the $\al N_o$'s
that $\al N_o$ is a subgroup of 
$\Calpha{o}$ (in particular, $o^{\al C}\in\al N_o$) 
for each $o\in\mathcal{O}$, and 
$\al N\subseteq\bigoplus_{o\in\mathcal{O}}\al N_o$.

To prove that equality holds, consider
an arbitrary element 
$(a_o)_{o\in\mathcal{O}}\in\bigoplus_{o\in\mathcal{O}}\al N_o
\bigl(\subseteq \MCalpha\bigr)$.
Then (i)~$a_o\in N_o$ 
for every $o\in\mathcal{O}$, and
(ii)~$a_o=o^{\al C}$ for all but finitely many $o$'s.
Condition~(i) implies that for every $o\in\mathcal{O}$
there exists a tuple $\bar{a}_o\in N$ such that the $o$-th
coordinate of $\bar{a}_o$ is $a_o$. Let $e_o$ be the matrix in
$\al R(\var{V},\mathcal{O})$ which has $x/\gamma_o$ in position
$(o,o)$, and zeros (that is, $o''/\gamma_{o'}$) in all other positions
$(o'',o')$. Since $\al N$ is an $\al R(\var{V},\mathcal{O})$-submodule
of $\MCalpha$,
each tuple $e_o\bar{a}_o$ is an element of $\al N$. 
It follows from the construction of $e_o$ that the tuple $e_o\bar{a}_o$
has $a_o$ in the $o$-th coordinate and zeros elsewhere. 
By condition~(ii),
only finitely many of the tuples $e_o\bar{a}_o$ ($o\in\mathcal{O}$)
is not the zero element of $\al N$, therefore 
their sum in $\MCalpha$, and hence also in $\al N$,
is $(a_o)_{o\in\mathcal{O}}$. Thus $(a_o)_{o\in\mathcal{O}}\in\al N$,
which finishes the proof that $\al N=\bigoplus_{o\in\mathcal{O}}\al N_o$.

To prove (2) 
assume that $o,o'\in\mathcal{O}$ are such that 
$o^{\al C}/\alpha= {o'}^{\al C}/\alpha$.
Since $o^{\al E}=o^{\al C}$ and ${o'}^{\al E}={o'}^{\al C}$, we also have that
$o^{\al E}/\alpha_{\al E}={o'}^{\al E}/\alpha_{\al E}$. 
Therefore
our assumption that $\al N$ is an $\al R(\var{V},\mathcal{O})$-submodule of 
$\MCalpha$ that contains $\MEalpha$ implies that
\[
o^{\al E}/\alpha_{\al E}={o'}^{\al E}/\alpha_{\al E}\subseteq N_o\cap N_{o'}
\subseteq N_o\cup N_{o'}\subseteq o^{\al C}/\alpha= {o'}^{\al C}/\alpha.
\]
In particular, we have that $o^{\al E},{o'}^{\al E}\in N_o\cap N_{o'}$,
or equivalently, $o^{\al C},{o'}^{\al C}\in N_o\cap N_{o'}$.

For any element $a\in N_o$ let $\tilde{a}$ denote the $\mathcal{O}$-tuple
which has $a$ in its $o$-th position, and zeros 
in all other positions (i.e., ${o''}^{\al C}$ in the $o''$-th position for
$o''\not=o$). 
It follows from statement (1) that $\tilde{a}\in\al N$ for all $a\in N_o$.
Now let $r$ denote the $\mathcal{O}\times\mathcal{O}$
matrix which has $d(x,o,o')/\gamma_o$ in position
$(o',o)$ and zeros (i.e., $o'''/\gamma_{o''}$) in all other positions
$(o''',o'')$. Since $d(o,o,o')=o'$, we have that 
$d(x,o,o')/\gamma_o\in\bar{H}_{o,o'}$, and hence 
$r\in\al R(\var{V},\mathcal{O})$.
As $\al N$ is an $\al R(\var{V},\mathcal{O})$-submodule of 
$\MCalpha$, we get that
$r\tilde{a}\in\al N$ for all $a\in N_o$. 
Thus
the $o'$-th component of $r\tilde{a}$ is in $N_{o'}$; that is,
\begin{equation}
\label{eq-transl}
d^{\al C}(a,o^{\al C},{o'}^{\al C})=\bigl(d(x,o,o')/\gamma_o\bigr)a\in N_{o'}
\quad\text{for all $a\in N_o$.}
\end{equation}
By Lemma~\ref{lm-group}(i),
$d^{\al C}(u,o^{\al C},{o'}^{\al C})=u+_o{o'}^{\al C}$ holds in
$\Calpha{o}$ for all $u\in o^{\al C}/\alpha$.
Therefore \eqref{eq-transl} can be restated as follows:
the image of $N_o$ under the translation 
$u\mapsto u+_o{o'}^{\al C}$ is contained in $N_{o'}$.
Because of ${o'}^{\al C}\in N_o$ this translation 
is a bijection of the abelian group $\al N_o$ onto itself, 
so we get that $N_o\subseteq N_{o'}$. 
A similar argument, with the roles of $o$ and $o'$ switched, shows that 
$N_{o'}\subseteq N_o$ also holds, and hence finishes the proof of 
statement (2).

Finally, to establish (3),
recall that $U:=\bigcup_{o\in\mathcal{O}} N_o$. 
Since 
$(o^{\al C}\in)N_o\subseteq o^{\al C}/\alpha(\subseteq C)$
for every $o\in\mathcal{O}$, we get that $U\subseteq C$ and
that the sets $N_o$ and $N_{o'}$ are disjoint
whenever $o^{\al C}\not\equiv_\alpha {o'}^{\al C}$.
By statement (2), 
$N_o=N_{o'}$
whenever $o^{\al C}\equiv_\alpha {o'}^{\al C}$.
Therefore $\{N_o:o\in\mathcal{O}\}$
is the partition of $U$ corresponding to the equivalence relation
$\alpha_U$.
It follows now from statement (1) that the universe of $\al N$ is the set
$\bigoplus_{o\in\mathcal{O}} N_o=\bigoplus_{o\in\mathcal{O}} (o^{\al C}/\alpha_U)$.
Hence, the equivalence of conditions (d) and (b) in 
Theorem~\ref{thm-subalg-submod}(2) implies that $U$ is the universe
of an $\mathcal{O}$-subalgebra $\al U$ of $\al C$.
Moreover, by Theorem~\ref{thm-subalg-submod}(3), we have that
$\al N=\MUalpha$.
This finishes the proof of Claim~\ref{clm-allSubmods}.
\hfill$\diamond$

\medskip

The proof of Theorem~\ref{thm-lattice-iso} is complete.
\end{proof}

We conclude this section by an auxiliary result concerning the modules
associated to $\alpha$-saturated subalgebras of direct products 
where $\alpha$ is an abelian product congruence.

\begin{lm}
\label{lm-dir-pr}
Let $\al C$ be an $\mathcal{O}$-subalgebra of an $\mathcal{O}$-product
$\prod_{i=1}^n\al B_i$ of $\mathcal{O}$-algebras
$\al B_1,\dots,\al B_n$ in $\Ovar{V}$, and let $\alpha=\prod_{i=1}^n\alpha_i$
be the product congruence of $\prod_{i=1}^n\al B_i$
where for each $i$, $\alpha_i$ is an abelian congruence of $\al B_i$.
If $\al C$ is $\alpha$-saturated in $\prod_{i=1}^n\al B_i$, 
then 
\begin{enumerate}
\item[{\rm(1)}]
$\alpha_{\al C}$ is an abelian congruence of $\al C$ and
$o^{\al C}/\alpha_{\al C}=
\prod_{i=1}^n(o^{\al B_i}/\alpha_i)$ holds for all
$o\in\mathcal{O}$; moreover,
\item[{\rm(2)}]
the $\al R(\var{V},\mathcal{O})$-modules 
$\MCCalpha$ and 
$\prod_{i=1}^n\MBialpha$
are naturally isomorphic via the map defined by
\begin{equation}
\label{eq-saturated}
\begin{aligned}
\MCCalpha
& \to 
\prod_{i=1}^n\MBialpha  \\
(a_o)_{o\in\mathcal{O}}=\bigl((a_{o1},\dots,a_{on})\bigr)_{o\in\mathcal{O}}
& \mapsto 
\bigl((a_{o1})_{o\in\mathcal{O}},\dots,(a_{on})_{o\in\mathcal{O}}\bigr)
\end{aligned}
\end{equation}
for all 
$a_o=(a_{o1},\dots,a_{on})\in o^{\al C}/\alpha_{\al C}=
\prod_{i=1}^n(o^{\al B_i}/\alpha_i)$ $(o\in\mathcal{O})$.
\end{enumerate}
\end{lm}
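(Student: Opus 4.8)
The plan is to exploit a single structural fact: since $\al C$ is a subalgebra of the $\mathcal{O}$-product $\prod_{i=1}^n\al B_i$, every term operation of $\al C$ is computed coordinatewise --- in particular the difference term $d$, the maps $x\mapsto d^{\al C}(x,o^{\al C},y)$ and $x\mapsto d^{\al C}(o^{\al C},x,o^{\al C})$ defining the abelian group structure on $\alpha$-classes in Lemma~\ref{lm-group}, and the unary $\mathcal{O}$-terms $r$ that represent the entries of matrices in $\al R(\var{V},\mathcal{O})$ via Lemma~\ref{lm-module}. Granting this, the module $\MCCalpha$ will visibly decompose as a product, and the map \eqref{eq-saturated} will be nothing more than a transposition of the two index sets $\mathcal{O}$ and $[n]$.

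For (1) I would first note that the product congruence $\alpha=\prod_{i=1}^n\alpha_i$ is abelian, being a product of abelian congruences, and that the restriction of an abelian congruence to a subalgebra is again abelian; hence $\alpha_{\al C}$ is abelian. For the description of its classes, observe that $o^{\al C}=(o^{\al B_1},\dots,o^{\al B_n})$, so the $\alpha$-class of $o^{\al C}$ inside $\prod_{i=1}^n\al B_i$ is exactly $\prod_{i=1}^n(o^{\al B_i}/\alpha_i)$; since $\al C$ is $\alpha$-saturated and contains $o^{\al C}$, this whole class lies in $C$, so $o^{\al C}/\alpha_{\al C}=\prod_{i=1}^n(o^{\al B_i}/\alpha_i)$, as claimed.

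For (2), using (1) the group $\Calpha{o}$ has universe $\prod_{i=1}^n(o^{\al B_i}/\alpha_i)$, and since its operations are obtained by evaluating the terms $d(x,o,y)$ and $d(o,x,o)$ coordinatewise, the tuple-to-itself assignment is an isomorphism of abelian groups $\Calpha{o}\to\prod_{i=1}^n\Bialpha{o}$. Taking the direct sum over $o\in\mathcal{O}$ and composing with the canonical isomorphism $\bigoplus_{o}\bigl(\prod_{i=1}^n G_{o,i}\bigr)\cong\prod_{i=1}^n\bigl(\bigoplus_{o}G_{o,i}\bigr)$ --- legitimate precisely because $n$ is finite, so a family of finite support in the index $o$ transposes to an $n$-tuple each of whose entries has finite support in $o$, and conversely --- I get that \eqref{eq-saturated} is a well-defined bijection and an isomorphism of abelian groups from $\MCCalpha$ onto $\prod_{i=1}^n\MBialpha$.

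It then remains only to check compatibility with the $\al R(\var{V},\mathcal{O})$-action, which is the one genuine computation. Given a matrix $m=(m_{o',o})$ in $\al R(\var{V},\mathcal{O})$ with $m_{o',o}=r_{o',o}/\gamma_o$ and $r_{o',o}(o)=o'$, and a vector $v=(a_o)_{o}\in\MCCalpha$ with $a_o=(a_{o,1},\dots,a_{o,n})$, Lemma~\ref{lm-module} gives $(mv)_{o'}=\sum_{o}r_{o',o}^{\al C}(a_o)$, a finite sum in the group $\Calpha{o'}$. Evaluating $r_{o',o}^{\al C}$ coordinatewise and applying Lemma~\ref{lm-module} inside each $\al B_i$, the $i$-th coordinate of this element equals $\sum_{o}r_{o',o}^{\al B_i}(a_{o,i})=(m v_i)_{o'}$, where $v_i:=(a_{o,i})_{o}$; after transposition this says exactly that \eqref{eq-saturated} sends $mv$ to the result of letting $m$ act coordinatewise, i.e.\ in the product module $\prod_{i=1}^n\MBialpha$, on the image $(v_1,\dots,v_n)$ of $v$. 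Thus \eqref{eq-saturated} is an isomorphism of $\al R(\var{V},\mathcal{O})$-modules, finishing the proof. I expect the only real friction to be bookkeeping --- keeping the index sets $\mathcal{O}$ and $[n]$ straight under the transposition, and confirming that the finite-support conditions survive it (they do, since $n<\infty$) --- with no structural input needed beyond Lemmas~\ref{lm-group} and~\ref{lm-module}.
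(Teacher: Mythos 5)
Your proposal is correct and follows essentially the same route as the paper: part (1) via the product-of-abelian-congruences observation and $\alpha$-saturation, and part (2) by first reading off the underlying abelian groups coordinatewise (using that $\mathcal{O}$-term operations, in particular $d(x,o,y)$ and $d(o,x,o)$, act componentwise in the $\mathcal{O}$-product) and then verifying the $\al R(\var{V},\mathcal{O})$-action entry-by-entry. The paper phrases the $\bigoplus_o \prod_i \cong \prod_i \bigoplus_o$ step as ``the map just regroups coordinates'' rather than isolating it as a canonical transposition, but the content is identical.
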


\begin{proof}{}
First we prove (1).
Since each $\alpha_i$ is an abelian congruence of $\al B_i$, their
product, $\alpha$, is an abelian congruence of $\prod_{i=1}^n\al B_i$,
and hence $\alpha_{\al C}$ is an abelian congruence of $\al C$.
Therefore the module $\MCCalpha$
exists. 
By our assumption, $\prod_{i=1}^n\al B_i$ is an $\mathcal{O}$-product
and $\al C$ is an $\mathcal{O}$-subalgebra of
$\prod_{i=1}^n\al B_i$, so $o^{\al C}=(o^{\al B_1},\dots,o^{\al B_n})$
for every $o\in\mathcal{O}$. This implies that
$o^{\al C}/\alpha_{\al C}\subseteq\prod_{i=1}^n (o^{\al B_i}/\alpha_i)$
for every $o\in\mathcal{O}$.
In fact, $=$ holds here, because $\al C$ is $\alpha$-saturated in 
$\prod_{i=1}^n\al B_i$. 

Now we prove (2).
It follows from the equalities in (1) that
the elements of $\MCCalpha$
are exactly the tuples $(a_o)_{o\in\mathcal{O}}$ such that 
(i)~$a_o\in o^{\al C}/\alpha_{\al C}=\prod_{i=1}^n(o^{\al B_i}/\alpha_i)$, that is,
$a_o=(a_{o1},\dots,a_{on})$ with $a_{oi}\in o^{\al B_i}/\alpha_i$
for every $o$ and $i$, and 
(ii)~$a_o=o^{\al C}=(o^{\al B_1},\dots,o^{\al B_n})$ for all but finitely many 
$o$'s.
The image of each such tuple $(a_o)_{o\in\mathcal{O}}$ under the map
\eqref{eq-saturated} clearly belongs to the module 
$\prod_{i=1}^n\MBialpha$. 
The map \eqref{eq-saturated}
just regroups coordinates, so it is one-to-one. To see that it is also onto,
let $\bigl((b_{o1})_{o\in\mathcal{O}},\dots,(b_{on})_{o\in\mathcal{O}}\bigr)\in
\prod_{i=1}^n\MBialpha$.
Then, for every $i$, we have that  
(i)$'$~$b_{oi}\in o^{\al B_i}/\alpha_i$ for all $o\in\mathcal{O}$, and
(ii)$'$~$b_{oi}=o^{\al B_i}$ for all but finitely many $o$'s.
Therefore the tuples $b_o:=(b_{o1},\dots,b_{on})$ satisfy the conditions 
that 
(i)~$b_o\in \prod_{i=1}^n(o^{\al B_i}/\alpha_i)=o^{\al C}/\alpha_{\al C}$
for all $o\in\mathcal{O}$, and
(ii)~$b_o=(o^{\al B_1},\dots,o^{\al B_n})=o^{\al C}$ for all but finitely many $o$'s.
Hence $(b_o)_{o\in\mathcal{O}}\in\MCCalpha$, and
$\bigl((b_{o1})_{o\in\mathcal{O}},\dots,(b_{on})_{o\in\mathcal{O}}\bigr)$
is its image under the map \eqref{eq-saturated}. This proves that
\eqref{eq-saturated} is a bijection.

To see that \eqref{eq-saturated} is a group isomorphism, recall from 
Lemma~\ref{lm-module} that 
in each one of the modules $\MCCalpha$ and
$\MBialpha$ ($1\le i\le n$), $+$ 
is defined to be the coordinatewise operation that is $+_o$ in 
coordinate $o$ for each $o\in\mathcal{O}$.
Therefore, in the module 
$\prod_{i=1}^n\MBialpha$, $+$ is the 
coordinatewise operation that is the operation
$+_o$ on $o^{\al B_i}/\alpha_i\,(\subseteq\al B_i)$ in coordinate $(o,i)$.
In the module $\MCCalpha$,
the operation $+_o$ on $o^{\al C}/\alpha_{\al C}=\prod_{i=1}^n(o^{\al B_i}/\alpha_i)$
also acts coordinatewise, because $+_o$ is an $\mathcal{O}$-term
and $\al C$ is an $\mathcal{O}$-subalgebra of $\prod_{i=1}^n\al B_i$.
Therefore, in $\MCCalpha$, too,
$+$ is the 
coordinatewise operation that is the operation
$+_o$ on $o^{\al B_i}/\alpha_i\,(\subseteq\al B_i)$ in coordinate $(o,i)$.
This shows that \eqref{eq-saturated} is a group isomorphism.

Finally, we will argue that \eqref{eq-saturated} is 
an $\al R(\var{V},\mathcal{O})$-module isomorphism.
Recall again from Lemma~\ref{lm-module} and the discussion 
preceding it that 
$\al R(\var{V},\mathcal{O})$ is a ring of matrices, and multiplication
by elements of $\al R(\var{V},\mathcal{O})$ in each one of the modules
$\MCCalpha$ and
$\MBialpha$ ($1\le i\le n$) is defined by 
matrix multiplication. Furthermore, in the module 
$\prod_{i=1}^n\MBialpha$, multiplication by 
ring elements is performed coordinatewise.
Therefore we will be done if we show that 
\begin{enumerate}
\item[$(*)$]
multiplication by ring elements acts coordinatewise
on the components $a_o=(a_{o1},\dots,a_{on})$ of the elements
$(a_o)_{o\in\mathcal{O}}$ of $\MCCalpha$.
\end{enumerate}
Since  \eqref{eq-saturated} is a group isomorphism, it suffices to prove
$(*)$ for the multiplication of entries. 
Following the definition of multiplication of entries in Lemma~\ref{lm-module},
let $m_{o',o}=r/\gamma_o\in\bar{H}_{o,o'}$ (where $r=r(x)\in\al F$ and
$r(o)=o'$), and let 
$a_o=(a_{o1},\dots,a_{on})\in 
       o^{\al C}/\alpha_{\al C}=\prod_{i=1}^n(o^{\al B_i}/\alpha_i)$.
By definition, $m_{o',o}a_o=r^{\al C}(a_o)$.
Since $r$ is an $\mathcal{O}$-term and $\al C$ is an $\mathcal{O}$-subalgebra
of $\prod_{i=1}^n\al B_i$, we get that 
$r^{\al C}(a_o)=
\bigl(r^{\al B_i}(a_{oi})\bigr)_{i=1}^n
=(m_{o',o}a_{oi})_{i=1}^n$. 
This proves that multiplication by $m_{o',o}$ acts coordinatewise
on $a_o$, and hence finishes the proof of the lemma.
\end{proof}

The isomorphism \eqref{eq-saturated} shows that under the assumptions of
Lemma~\ref{lm-dir-pr} the only difference between the modules
$\MCCalpha$ and
$\prod_{i=1}^n\MBialpha$ is how we group their
coordinates. Therefore we may identify 
$\MCCalpha$ and
$\prod_{i=1}^n\MBialpha$ via the isomorphism
\eqref{eq-saturated}, and view the submodules of 
$\MCCalpha$ as 
submodules of the direct product 
$\prod_{i=1}^n\MBialpha$.

\section{Proof of Theorem~\ref{thm-main}}\label{mainthm-sec}

Throughout this section $\al A$ will be 
a finite algebra with a $k$-parallelogram term, and 
$\var{V}$ the variety generated by $\al A$.
Our goal is to combine the results of Sections~\ref{reduc} and \ref{modules} 
to prove that if $\al A$ satisfies the
split centralizer condition, then $\al A$ is dualizable.
In view of Theorem~\ref{thm-WZ}, we will be done if we can show 
that there exists a constant $\ccc$,
depending only on $\al A$, such that 
\begin{equation}
\label{eq-ToProve}
\rel{R}_\ccc({\al A})\dentails\rel{R}(\al A).
\end{equation}
Recall that $\rel{R}(\al A)$ stands for 
the set of all (finitary) compatible relations of
$\al A$, and for every positive integer $n$, 
$\rel{R}_n({\al A})$ denotes the set of all compatible relations 
of $\al A$ of arity $\le n$.
The relation $\dentails$ is described in Theorem~\ref{thm-dentails}.

In addition to the natural numbers $\aaa$, $\sss$, $\ii$, 
and $\pp$ introduced in Section~\ref{reduc}, we will need a few other
parameters related to $\al A$, which we introduce now. 
The list includes the constant $\ccc$ that we will use in the proof of
\eqref{eq-ToProve}.
Before defining the new parameters, let us fix a set $\mathcal{O}$ of
constant symbols not occurring in the language of $\var{V}$ such that
$|\mathcal{O}|=\ii$. 
Furthermore, let $\Mod(\al A,\pp)$ denote the
set of all $\al R(\var{V},\mathcal{O})$-modules $\MTalpha$
where $\al T$ is a subalgebra of $\al A^p$
for some $1\le p\le\pp$ that is isomorphic to a section of $\al A$, 
$\alpha$ is a nontrivial
abelian congruence of $\al T$, and $\al T$ is made into an 
$\mathcal{O}$-algebra by interpreting
the new constant symbols $o\in\mathcal{O}$
in $\al T$ in such a way that the
elements $o^{\al T}$ ($o\in\mathcal{O}$) represent every $\alpha$-class.
\begin{itemize}
\item
For every 
$\al R(\var{V},\mathcal{O})$-module
$\al Q$, let 
\[
\hh_{\al Q}:=\sum\{|\Hom(\al M,\al Q)|:\al M\in\Mod(\al A,\pp)\},
\] 
and
let $\hh$ be the maximum of the numbers $\hh_{\al Q}$ as
$\al Q$ runs over all subdirectly irreducible 
$\al R(\var{V},\mathcal{O})$-modules.
($\hh$ stands for `$\underline{\text{h}}$omomorphisms'.)
\item
Let $\ee$ be the least common multiple of the exponents of the additive 
groups of all modules $\al M\in\Mod(\al A,\pp)$. 
($\ee$ stands for `$\underline{\text{e}}$xponent'.)
\item
Let 
$\ccc_0:=\max(1+\pp,k-1)$, and let
$\ccc:=\max(\ccc_0,\pp\hh(\ee+1))$.
($\ccc$ stands for `$\underline{\text{c}}$onstant'.)
\end{itemize}
Note that the ring $\al R(\var{V},\mathcal{O})$ is finite, because
it consists of $\mathcal{O}\times\mathcal{O}$ matrices where each entry
is determined by an element of the free algebra $\al F$ in
$\var{V}$ with free generating set $\{x\}\cup\mathcal{O}$, 
and $\al F$ is finite, since $\var{V}$ is generated by $\al A$
(a finite algebra).
The set $\Mod(\al A,\pp)$ is also finite, since $\al A$, $\pp$, 
and $\mathcal{O}$ are finite. 
It follows from the main result of \cite{kearnes-module} that
$|\al Q|\le|\al R(\var{V},\mathcal{O})|$ holds 
for every subdirectly irreducible $\al R(\var{V},\mathcal{O})$-module
$\al Q$. Therefore there are only finitely many 
subdirectly irreducible $\al R(\var{V},\mathcal{O})$-modules, 
up to isomorphism, and all are finite, so $\hh$ is a natural number.
The finiteness of $\Mod(\al A,\pp)$ implies also that $\ee$ is a natural number,
and hence so are $\ccc_0$ and $\ccc$.

\begin{remrks}
\label{rem-about-rhqc}
(a)
The definition of $\Mod(\al A,\pp)$ shows that
$\Mod(\al A,\pp)\not=\emptyset$ if and only if
some power $\al A^p$ of $\al A$ with $1\le p\le\pp$ has a subalgebra
that is isomorphic to a section of $\al A$
with a nontrivial abelian congruence.

(b)
If $\Mod(\al A,\pp)=\emptyset$, then 
$\pp=1$,
$\ee=1$, $\hh=0$, 
and
$\ccc=\ccc_0=\max(2,k-1)$,
while if $\Mod(\al A,\pp)\not=\emptyset$, then 
$\pp\ge1$,
$\ee\ge2$, and
$\hh\ge|\Mod(\al A,\pp)|\ge 1$, because 
the summands in the definition of $\hh_{\al Q}$
satisfy $|\Hom(\al M,\al Q)|\ge 1$
for all $\al M\in\Mod(\al A,\pp)$ and all  
$\al R(\var{V},\mathcal{O})$-modules $\al Q$.

(c)
In particular, for the trivial $\al R(\var{V},\mathcal{O})$-module
$\al Q_0$ we have
$\hh_{\al Q_0}=|\Mod(\al A,\pp)|$, therefore the inequality in (b)
implies that $\hh\ge\hh_{\al Q_0}$.
\end{remrks}

We start the proof of Theorem~\ref{thm-main} by considering 
compatible relations $B$ of $\al A$ that are constructed in
Theorem~\ref{thm-reduction}.
We will use the same notation for these relations as in
Theorem~\ref{thm-reduction}, except that the 
$\tilde{\phantom{n}}$'s from the notation of
the congruences $\tilde\alpha_i$ will be omitted. 
So, the set of relations 
we will consider, and will denote by $\rel{R}^*(\al A)$,
consists of all compatible relations $B$ of $\al A$
that satisfy the following condition $(*)$ from Theorem~\ref{thm-reduction},
for some $n$:
\begin{enumerate}
\item[$(*)$]
There exist 
\begin{enumerate}
\item[{\rm(I)}]
subalgebras $\al B_i\le\al A^{p_i}$ with $p_i\le\pp$
for each $i\in[n]$
such that $\al B_i$ is isomorphic to a section of $\al A$, and  
\item[{\rm(II)}]
nontrivial abelian congruences 
$\alpha_i\in\Con(\al B_i)$ $(i\in[n])$
\end{enumerate}
such that
\begin{enumerate}
\item[{\rm(III)}]
$B$ is the universe of a
subdirect product $\al B$ of 
$\al B_1,\dots,\al B_n$, and
\item[{\rm(IV)}]
the product congruence
$\alpha:=\prod_{i=1}^n \alpha_i$ of $\prod_{i=1}^n\al B_i$ 
restricts to $\al B$ as a congruence 
$\alpha_{\al B}$ 
of index $\le\ii$.
\end{enumerate}
\end{enumerate}
We will refer to $n$ as the \emph{$*$-arity of $B$}.
Since 
$\al B\le\prod_{i=1}^n\al B_i\le\prod_{i=1}^n\al A^{p_i}$,
the arity of $B$, as a compatible 
relation of $\al A$, is $\sum_{i=1}^n p_i$.

\begin{lm}
\label{lm-main}
Let $B\in\rel{R}^*(\al A)$ be a compatible relation of $\al A$
of $*$-arity $n$, and let $\al B_i$, $\alpha_i$, $\al B$, and $\alpha$
be as in $(*)$.
If 
$n>\hh(\ee+1)$ 
and $\al B$ is 
a $\cap$-irreducible subalgebra
of its $\alpha$-saturation $\al B[\alpha]$ in $\prod_{i=1}^n\al B_i$,
then there exist 
\begin{enumerate}
\item[{\rm(1)}]
a compatible relation $B'\in\rel{R}^*(\al A)$ of $\al A$ of $*$-arity 
$\le n-\ee$ 
and
\item[{\rm(2)}]
subalgebras $\al D\le\al B_i^{\ee+1}$ and 
$\al D'\le\al B_i^2$
for some $i\in[n]$ 
\end{enumerate}
such that
\begin{enumerate}
\item[{\rm(3)}]
$\{B_1,\dots,B_n,B',D,D'\}\dentails B$.
\end{enumerate}
\end{lm}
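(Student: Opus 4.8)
The plan is to exploit the module machinery of Section~\ref{modules} together with the pigeonhole argument suggested by the parameters $\hh$ and $\ee$. First I would set up the relevant module. Expand the language of $\var V$ by the constant set $\mathcal O$ with $|\mathcal O|=\ii$. Since $\alpha_{\al B}$ has index $\le\ii$, we may interpret the constants $o^{\al B}$ ($o\in\mathcal O$) in $\al B$ so that they represent all $\alpha_{\al B}$-classes; choosing the $o^{\al B}$ to be tuples in $B$ and projecting, each $\al B_i$ becomes an $\mathcal O$-algebra in which the $o^{\al B_i}$ represent all $\alpha_i$-classes, and $\al B$ becomes an $\mathcal O$-subalgebra of $\prod_{i=1}^n\al B_i$. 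By construction each $\al B_i\le\al A^{p_i}$ with $p_i\le\pp$ is isomorphic to a section of $\al A$ and $\alpha_i$ is a nontrivial abelian congruence, so $\MBialpha\in\Mod(\al A,\pp)$ for every $i$. Passing to the $\alpha$-saturation, Lemma~\ref{lm-dir-pr} identifies $\MBsatalpha_{\al B[\alpha]}$ with $\prod_{i=1}^n\MBialpha$, and by Theorem~\ref{thm-lattice-iso} the $\mathcal O$-subalgebras of $\al B[\alpha]$ containing $\al E=\langle\mathcal O^{\al B[\alpha]}\rangle$ correspond to the submodules of $\prod_{i=1}^n\MBialpha$ containing $\module{E}{\alpha_{\al E}}$. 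In particular $\al B$ itself, being $\cap$-irreducible in the lattice of its $\alpha$-saturated oversubalgebras inside $\prod\al B_i$ (equivalently, $\cap$-irreducible among $\mathcal O$-subalgebras of $\al B[\alpha]$ over $\al E$), corresponds to a $\cap$-irreducible element $\al N$ of that submodule lattice; so $\al N/\module{E}{\alpha_{\al E}}$ embeds in a subdirectly irreducible quotient, i.e.\ there is a submodule $\al N^+>\al N$ with $\al N^+/\al N$ simple, and hence a homomorphism of $\prod_{i=1}^n\MBialpha$ onto a subdirectly irreducible module $\al Q$ whose kernel is exactly $\al N$ (after translating by $\module{E}{\alpha_{\al E}}$, which is where some care is needed).

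The next step is the pigeonhole. The surjection $\prod_{i=1}^n\MBialpha\twoheadrightarrow\al Q$ is given by its restrictions $\psi_i\colon\MBialpha\to\al Q$, and each $\psi_i$ lies in $\Hom(\MBialpha,\al Q)$ with $\MBialpha\in\Mod(\al A,\pp)$; the number of such homomorphisms, summed over all members of $\Mod(\al A,\pp)$, is at most $\hh_{\al Q}\le\hh$. Since $n>\hh(\ee+1)$, there is an index set $I\subseteq[n]$ with $|I|\ge\ee+1$ on which the data $(\al B_i,\alpha_i,\psi_i)$ agree: all $\al B_i$ equal a fixed $\al B_{i_0}$, all $\alpha_i$ equal a fixed $\alpha_{i_0}$, and all $\psi_i$ equal a fixed $\psi\colon\MBialpha\to\al Q$ (here we also use that up to relabelling there are only boundedly many pairs $(\al B_i,\alpha_i)$, which is absorbed into the definition of $\hh$ through $\Mod(\al A,\pp)$). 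Fix $\ee$ of these coordinates, say $i_1,\dots,i_\ee\in I$, plus one more $i_{\ee+1}\in I$ to serve as a "base point". Let $\al D'\le\al B_{i_0}^2$ be the $\alpha_{i_0}$-pairing graph $\{(u,v): u\equiv_{\alpha_{i_0}}v\}$, which is a compatible relation of $\al A$ of arity $2p_{i_0}\le 2\pp$, and let $\al D\le\al B_{i_0}^{\ee+1}$ be the relation expressing, via the difference term $d$ and Corollary~\ref{cor-dMaltsev}, that the "$\alpha$-sum" $u_1+_o\dots+_o u_\ee$ of the first $\ee$ coordinates equals a fixed value determined by the last coordinate together with the base point $o^{\al B_{i_0}}$; concretely $\al D$ is the graph of the iterated $d$-expression in \eqref{eq-dMaltsev}. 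Because $\ee$ is a multiple of the exponent of the additive group of $\MBialpha$, that iterated $d$-expression is independent of the bracketing and computes the module sum, so $\al D$ encodes exactly the linear relation $\sum_{j=1}^{\ee}\psi(\text{coord }i_j)$ being determined by coordinate $i_{\ee+1}$.

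Finally I would assemble $B'$ and verify entailment. Define $B'$ to be the relation obtained from $B$ by "folding" the coordinate blocks $i_1,\dots,i_\ee$ into the single block $i_{\ee+1}$: that is, $B'$ is the primitive positive definition that takes a tuple in $B$ and replaces the entries in blocks $i_1,\dots,i_\ee$ by their module-sum (computed by $d$) pushed into block $i_{\ee+1}$, keeping all other coordinate blocks. This $B'$ is again in $\rel R^*(\al A)$: it is a subdirect product of the $\al B_j$'s over the index set $[n]\setminus\{i_1,\dots,i_\ee\}$, the restriction of the corresponding product congruence still has index $\le\ii$ (summing coordinates cannot increase the number of classes modulo $\alpha$), and all the $\al B_j$, $\alpha_j$ in play are unchanged, so $(*)$ holds with $*$-arity $n-\ee$. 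For the entailment $\{B_1,\dots,B_n,B',\al D,\al D'\}\dentails B$, I would write down the quantifier-free primitive positive formula that recovers a tuple of $B$ from a tuple of $B'$: use $\al D'$ to reintroduce blocks $i_1,\dots,i_\ee$ as arbitrary elements $\alpha_{i_0}$-related to appropriate base points (so the coset data is free), use $\al D$ to impose that their module-sum matches what was stored in block $i_{\ee+1}$, and use the $\al B_j$ to keep each block inside its factor; the key point is that $\al B=\psi^{-1}(0)$-type description means membership in $B$ is detected coordinatewise by $\psi$ plus lying in each $\al B_j$ plus satisfying the linear constraint, exactly what $B'$, $\al D$, $\al D'$ together encode. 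Since this is a quantifier-free pp definition, Theorem~\ref{thm-dentails} (via observation~(ii) after it) gives the $\dentails$; transitivity of $\dentails$ then yields (3). The main obstacle I anticipate is bookkeeping the "base point" shift: the module isomorphism of Theorem~\ref{thm-lattice-iso} is only up to the fixed submodule $\module{E}{\alpha_{\al E}}$ coming from the constants, so I must check that the linear constraint cut out by $\al D$ is compatible with the affine (not linear) nature of $\al B$ as a coset — i.e.\ that translating everything by the constants $o^{\al B_i}$ is harmless — and that the coordinate-folding genuinely preserves $\cap$-irreducibility-free membership in $\rel R^*(\al A)$ rather than, say, destroying subdirectness in some block.
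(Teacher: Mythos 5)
Your proposal follows the same overall route as the paper's proof: expand by $\mathcal{O}$ so that each $\MBialpha\in\Mod(\al A,\pp)$, identify $\MBsatalpha$ with $\prod_{i=1}^n\MBialpha$ (Lemma~\ref{lm-dir-pr}), transfer $\cap$-irreducibility of $\al B$ in $\al B[\alpha]$ to $\cap$-irreducibility of the submodule $\MBalpha$ (Theorem~\ref{thm-lattice-iso}), pass to the subdirectly irreducible or trivial quotient $\al Q$, pigeonhole the component maps $\psi_i\colon\MBialpha\to\al Q$ using the bound $\hh$, and collapse the coincident coordinates via the iterated difference-term expression of Corollary~\ref{cor-dMaltsev}. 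That skeleton is right and matches the paper.

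The genuine gap is precisely the one you flag but do not resolve at the end. You assert that after folding, ``$B'$ is again in $\rel{R}^*(\al A)$'' with ``all the $\al B_j$, $\alpha_j$ in play unchanged.'' That is not automatic: the projection of $B'$ onto the collapsed block is some subalgebra $\bar{\al B}'\le\bar{\al B}$, and the restriction $\bar{\alpha}':=\bar{\alpha}\restr_{\bar{\al B}'}$ \emph{may be trivial}, in which case condition~(II) of $(*)$ fails and $B'\notin\rel{R}^*(\al A)$, so the intended induction on $*$-arity cannot proceed. The paper handles this degenerate case in a separate Case~2 of the proof, and the machinery needed there is exactly what your construction lacks: the pigeonhole is pushed to yield $\ee+2$ coincident coordinates (not merely the $\ee+1$ you use), so that when $\bar{\alpha}'$ is trivial there remains a spare coordinate of index $\ee+2$ with the same $(\al B_i,\alpha_i,\psi_i)$; the relation $B'$ is then replaced by its projection $B''$ away from the degenerate block; and $\al D'$ is redefined to be the graph of the collapsing homomorphism $\nu\colon\bar{\al B}\to\bar{\al B}'$ that sends each element to the unique constant in its $\bar{\alpha}$-class. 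Your choice of $\al D'$ (the graph of $\alpha_{i_0}$) serves a different purpose and cannot substitute for that map, and with only $\ee+1$ coincident coordinates you have no spare coordinate with which to build it. So the proposal is correct only in the non-degenerate case; the degenerate case is a real missing ingredient of the argument, not routine bookkeeping.
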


\begin{proof}
Let $B$ satisfy the hypotheses of the lemma,
including the assumptions that 
$n>\hh(\ee+1)$ 
and 
$\al B$ is a $\cap$-irreducible subalgebra of $\al B[\alpha]$.
Since
$\al A$ has a parallelogram term, we know from Theorem~\ref{thm-parterm-cm}
that the variety $\var{V}$ 
it generates is congruence modular.
As in Section~\ref{modules}, we 
expand the language of $\var{V}$ by $\mathcal{O}$.

Let us fix interpretations $o^{\al B}$ for each $o\in\mathcal{O}$ in $\al B$ 
in such a way that the elements $o^{\al B}$ ($o\in\mathcal{O}$) represent all
$\alpha_{\al B}$-classes of $\al B$; this is possible, since one of our 
assumptions is that $\alpha_{\al B}$ has index $\le\ii$. 
So, $\al B$ becomes an $\mathcal{O}$-algebra in $\Ovar{V}$. 
Now we fix interpretations $o^{\al B_i}$
for every constant symbol $o\in\mathcal{O}$ 
in each $\al B_i$ ($i\in[n]$) such that
$o^{\al B}=(o^{\al B_1},\dots,o^{\al B_n})$. Thus, each $\al B_i$ will become
an $\mathcal{O}$-algebra in $\Ovar{V}$.
(Note that $\al B_i$ and $\al B_j$ might become different
$\mathcal{O}$-algebras even if $\al B_i$ and $\al B_j$ are the same 
as algebras in the original language of $\var{V}$.)
This choice makes sure that $\al B$ is a subdirect
$\mathcal{O}$-subalgebra of the $\mathcal{O}$-product
$\prod_{i=1}^n \al B_i$
of the $\mathcal{O}$-algebras $\al B_1,\dots,\al B_n$.
Since the elements $o^{\al B}$ ($o\in\mathcal{O}$)
represent all $\alpha_{\al B}$-classes of $\al B$, it follows that
for every $i\in[n]$, the elements $o^{\al B_i}$ ($o\in\mathcal{O}$)
represent all $\alpha_i$-classes of $\al B_i$.
Hence $\MBialpha\in\Mod(\al A,\pp)$ for every $i\in[n]$.

To make $\al B[\alpha]$ into an $\mathcal{O}$-algebra in $\Ovar{V}$,
let $o^{\al B[\alpha]}:=o^{\al B}$ for every $o\in\mathcal{O}$.
Then $\al B[\alpha]$ is also a subdirect
$\mathcal{O}$-subalgebra of the $\mathcal{O}$-product
$\prod_{i=1}^n \al B_i$
of the $\mathcal{O}$-algebras $\al B_1,\dots,\al B_n$,
and $\al B$ is an $\mathcal{O}$-subalgebra of
$\al B[\alpha]$. 
Moreover, since
every $\alpha_{\al B[\alpha]}$-class contains an $\alpha_{\al B}$-class,
the elements $o^{\al B[\alpha]}=o^{\al B}$ ($o\in\mathcal{O}$) represent all 
$\alpha_{\al B[\alpha]}$-classes of $\al B[\alpha]$.

Now let us consider the $\al R(\var{V},\mathcal{O})$-modules
$\MBalpha$ and $\MBsatalpha$.
The fact that $\al B$ is an $\mathcal{O}$-subalgebra of $\al B[\alpha]$
implies, by Theorem~\ref{thm-lattice-iso}, 
that $\MBalpha$ is an $\al R(\var{V},\mathcal{O})$-submodule of $\MBsatalpha$.  
Our additional assumption that $\al B$ is 
a $\cap$-irreducible subalgebra 
of $\al B[\alpha]$ implies 
that $\al B$ is also $\cap$-irreducible 
in the lattice of $\mathcal{O}$-subalgebras of $\al B[\alpha]$,
hence, again by Theorem~\ref{thm-lattice-iso},
$\MBalpha$ is a $\cap$-irreducible $\al R(\var{V},\mathcal{O})$-submodule
of $\MBsatalpha$.
Recall also that by Lemma~\ref{lm-dir-pr} (applied to $\al C=\al B[\alpha]$), 
the $\al R(\var{V},\mathcal{O})$-modules
$\MBsatalpha$ and $\prod_{i=1}^n\MBialpha$ are
naturally isomorphic.
Therefore we can identify the modules 
$\MBsatalpha$ and $\prod_{i=1}^n\MBialpha$
via this isomorphism, and get that 
$\MBalpha$ is a $\cap$-irreducible $\al R(\var{V},\mathcal{O})$-submodule
of $\prod_{i=1}^n\MBialpha$.
Thus, the quotient
\[
\al Q:=\Bigl(\prod_{i=1}^n\MBialpha\Bigr)\Big/\MBalpha
\]
is a subdirectly irreducible 
or trivial
$\al R(\var{V},\mathcal{O})$-module
(according to whether $\al B<\al B[\alpha]$ or 
$\al B=\al B[\alpha]$).

Let $\phi\colon\prod_{i=1}^n\MBialpha\to\al Q$
be the natural homomorphism.
For each $i\in[n]$,
the mapping $\psi_i\colon\MBialpha\to\al Q$ defined by
$\psi_i(z)=\phi(0,\dots,0,z,0,\dots,0)$ (with $z$ in the $i$-th position)
for all $z\in\MBialpha$
is an $\al R(\var{V},\mathcal{O})$-module homomorphism, and 
\[
\phi(z_1,\dots,z_n)=\sum_{i=1}^n\psi_i(z_i)
\quad
\text{for all $(z_1,\dots,z_n)\in\prod_{i=1}^n\MBialpha$.}
\]
Thus, for every element $(z_1,\dots,z_n)$ of $\prod_{i=1}^n\MBialpha$, 
\begin{equation}
\label{eq-solset}
(z_1,\dots,z_n)\in \setMBalpha
\quad\text{if and only if}\quad 
\sum_{i=1}^n\psi_i(z_i)=0;
\end{equation}
in other words, $\setMBalpha$ is the solution set of the equation
$\sum_{i=1}^n\psi_i(z_i)=0$ in $\prod_{i=1}^n\MBialpha$.

Since the $\al R(\var{V},\mathcal{O})$-module 
$\al Q$ is subdirectly irreducible 
or trivial,
and 
the $\al R(\var{V},\mathcal{O})$-modules $\MBialpha$
all belong to $\Mod(\al A,\pp)$, the definition of $\hh$ 
(combined with Remarks~\ref{rem-about-rhqc}~(c))
makes sure that
there are at most $\hh$ distinct homomorphisms among the $\psi_i$'s 
($i\in [n]$).
Therefore our assumption 
$n>\hh(\ee+1)$ 
forces that 
at least 
$\ee+2$ 
of the $\psi_i$'s are equal.
By permuting coordinates we may assume that 
for $i=1,\dots,\ee+2$ 
the $\psi_i$' are equal;
hence for $i=1,\dots,\ee+2$
the $\MBialpha$'s are equal as $\al R(\var{V},\mathcal{O})$-modules, 
the $\al B_i$'s are equal as $\mathcal{O}$-algebras, and
the $\alpha_i$'s are equal.
Let 
\begin{align}
\bar{\psi} &{}:= \psi_1=\dots=\psi_{\ee+2},\label{eq-psi-s}\\  
\bar{\al B} &{}:=\al B_1=\dots=\al B_{\ee+2},\label{eq-B-s}\\ 
\bar{\alpha} &{}:=\alpha_1=\dots=\alpha_{\ee+2},\label{eq-alpha-s}
\end{align}
and 
\begin{equation}
\label{eq-o-s}
o^{\bar{\al B}}:=o^{\al B_1}=\dots=o^{\al B_{\ee+2}}\quad
\text{for every $o\in\mathcal{O}$}.
\end{equation} 

From now on we will 
partition the coordinates of $\prod_{i=1}^n\al B_1$ 
(and its subalgebras $\al B[\alpha]$, $\al B$, etc.)
into two blocks, the first block consisting of the first 
$\ee+1$ coordinates, and the second from coordinates $\ee+2,\dots,n$.
Accordingly, we will
use the notation $\bar{\wec{x}}=(x_1,\dots,x_{\ee+1})$
for the elements of
$\bar{\al B}^{\ee+1}=\prod_{i=1}^{\ee+1}\al B_i$ 
and the notation $\wec{x}=(x_{\ee+2},\dots,x_n)$
for the elements of $\prod_{i=\ee+2}^n\al B_i$.
Hence, an element of $\prod_{i=1}^n\al B_i$ (in particular, of
$\al B$ or $\al B[\alpha]$) will be written as $(\bar{\wec{x}},\wec{x})$.
For any tuple $(\bar{\wec{x}},\wec{x})\in\al B[\alpha]$ let
$\mathcal{O}_{(\bar{\wec{x}},\wec{x})}$ denote
the set of all $o\in\mathcal{O}$ such that 
$(\bar{\wec{x}},\wec{x})\in o^{\al B[\alpha]}/\alpha_{\al B[\alpha]}$.

\begin{clm}
\label{clm-alpha}
The following hold for every element $(\bar{\wec{x}},\wec{x})\in\al B[\alpha]$:
\begin{enumerate}
\item[{\rm(1)}]
$\mathcal{O}_{(\bar{\wec{x}},\wec{x})}\not=\emptyset$, and for any 
$o\in\mathcal{O}$,
\begin{align*}
o\in\mathcal{O}_{(\bar{\wec{x}},\wec{x})}
\quad &\Leftrightarrow\quad
x_i\in o^{\al B_i}/\alpha_i\ \ \text{for all\ \ $i\in[n]$}\\
\quad &\Leftrightarrow\quad
x_i\in o^{\bar{\al B}}/\bar{\alpha}\ \ \text{for all\ \ $i\in[\ee+1]$, and}\\
\quad &\phantom{{}\Leftrightarrow{}}\quad
x_i\in o^{\al B_i}/\alpha_i\ \ \text{for all\ \ $i\in[n]\setminus[\ee+1]$.}
\end{align*}
\item[{\rm(2)}]
In particular,
$x_1\equiv_{\bar{\alpha}}x_2\equiv_{\bar{\alpha}}\dots\equiv_{\bar{\alpha}}
x_{\ee+1}$.
\end{enumerate}
\end{clm}

\noindent
{\it Proof of Claim~\ref{clm-alpha}.} Let
$(\bar{\wec{x}},\wec{x})\in\al B[\alpha]$.

To prove (1) observe first that  
since the elements $o^{\al B[\alpha]}$ ($o\in\mathcal{O}$) represent all 
$\alpha_{\al B[\alpha]}$-classes, there is an $o\in\mathcal{O}$ such that
$o^{\al B[\alpha]}$ is in the $\alpha_{\al B[\alpha]}$-class of 
$(\bar{\wec{x}},\wec{x})$. 
Hence $o\in\mathcal{O}_{(\bar{\wec{x}},\wec{x})}$, proving that 
$\mathcal{O}_{(\bar{\wec{x}},\wec{x})}\not=\emptyset$.
For the second statement in (1), recall that 
$o^{\al B[\alpha]}=(o^{\al B_1},\dots,o^{\al B_n})$ for every
$o\in\mathcal{O}$, and that $\alpha_{\al B[\alpha]}$ is the restriction of
the product congruence $\alpha=\prod_{i=1}^n\alpha_i$ to $\al B[\alpha]$.
Thus, $(\bar{\wec{x}},\wec{x})\in o^{\al B[\alpha]}/\alpha_{\al B[\alpha]}$
if and only if 
$x_i\in o^{\al B_i}/\alpha_i$ for all $i\in[n]$.
This, together with the definition of $\mathcal{O}_{(\bar{\wec{x}},\wec{x})}$
implies the first $\Leftrightarrow$ in the displayed statement.
The second $\Leftrightarrow$ follows by \eqref{eq-B-s}--\eqref{eq-o-s}.

For (2), 
we get from (1) that if $o\in\mathcal{O}_{(\bar{\wec{x}},\wec{x})}$ then
$x_1,\dots,x_{\ee+1}\in o^{\bar{\al B}}/\bar{\alpha}$. Since 
$\mathcal{O}_{(\bar{\wec{x}},\wec{x})}\not=\emptyset$, this implies that
$x_1,\dots,x_{\ee+1}$ are in the same $\bar{\alpha}$-class.
\hfill$\diamond$

\medskip

\begin{clm}
\label{clm-y}
The following conditions on
an element $(\bar{\wec{x}},\wec{x})\in\al B[\alpha]$
are equivalent: 
\begin{enumerate}
\item[{\rm(a)}]
$(\bar{\wec{x}},\wec{x})\in\al B$;
\item[{\rm(b)}]
$(y,\dots,y,\wec{x})\in\al B$ (with $\ee+1$ occurrences of $y$)
for the element $y$ defined by
\begin{equation}
\label{eq-y}
y=d^{\bar{\al B}}(d^{\bar{\al B}}(\dots 
d^{\bar{\al B}}(d^{\bar{\al B}}(x_1,x_{\ee+1},x_2),x_{\ee+1},x_3)\dots),x_{\ee+1},x_\ee).
\end{equation}
\end{enumerate}
\end{clm}

\noindent
{\it Proof of Claim~\ref{clm-y}.}
Throughout the proof, we will work with a fixed (but arbitrary) tuple 
$(\bar{\wec{x}},\wec{x})\in\al B[\alpha]$, and $y$ will denote 
the element defined in \eqref{eq-y}.
By Claim~\ref{clm-alpha}(1), 
$\mathcal{O}_{(\bar{\wec{x}},\wec{x})}\not=\emptyset$ and
we have $x_i\in o^{\al B_i}/\alpha_i$ whenever
$o\in\mathcal{O}_{(\bar{\wec{x}},\wec{x})}$ and $i\in[n]$. 
Consequently, for each $o\in\mathcal{O}_{(\bar{\wec{x}},\wec{x})}$ and $i\in[n]$,
the $\mathcal{O}$-tuple
$\otupl{x_i}$ that has $x_i$
in position $o$ and zeros (i.e., ${o'}^{\al B_i}$) 
in all other positions $o'$ is an element of $\MBialpha$. 
Similarly, 
the $\mathcal{O}$-tuple
$\otupl{(\bar{\wec{x}},\wec{x})}$ 
that has $(\bar{\wec{x}},\wec{x})=(x_i)_{i\in[n]}$ in position $o$ and 
zeros (i.e., ${o'}^{\al B[\alpha]}=({o'}^{\al B_1},\dots,{o'}^{\al B_n})$)
in all other positions $o'$ is
an element of $\MBsatalpha$.
By inspecting the isomorphism that we use to identify
$\MBsatalpha$ with $\prod_{i=1}^n\MBialpha$ it is easy to see that
this identification yields that 
$\otupl{(\bar{\wec{x}},\wec{x})}=(\otupl{x_i})_{i\in[n]}$.

Therefore, if $(\bar{\wec{x}},\wec{x})\in\al B$,
then
$(\otupl{x_i})_{i\in[n]}=\otupl{(\bar{\wec{x}},\wec{x})}\in\setMBalpha$ for all 
$o\in\mathcal{O}_{(\bar{\wec{x}},\wec{x})}$, so 
using \eqref{eq-solset} we see that
(a) implies the following condition:
\begin{enumerate}
\item[(a)$'$]
$\sum_{i=1}^n \psi_i(\otupl{x_i})=0$
for all $o\in\mathcal{O}_{(\bar{\wec{x}},\wec{x})}$.
\end{enumerate}
Conversely, assume that (a)$'$ holds for $(\bar{\wec{x}},\wec{x})$.
Since $\mathcal{O}_{(\bar{\wec{x}},\wec{x})}\not=\emptyset$,
we can fix an $o\in\mathcal{O}_{(\bar{\wec{x}},\wec{x})}$
and use \eqref{eq-solset} to conclude that 
$\otupl{(\bar{\wec{x}},\wec{x})}=(\otupl{x_i})_{i\in[n]}\in\setMBalpha$.
By the definition of $\MBalpha$, this implies that
$(\bar{\wec{x}},\wec{x})\in\al B$.
Thus, (a) $\Leftrightarrow$ (a)$'$.

Now let us consider the element $y\in\bar{\al B}$.
By Claim~\ref{clm-alpha}(1), for every $o\in\mathcal{O}_{(\bar{\wec{x}},\wec{x})}$
we have that $x_1,\dots,x_{\ee+1}\in o^{\bar{\al B}}/\bar{\alpha}$,
so the idempotence of $d$ implies that
$y\in o^{\bar{\al B}}/\bar{\alpha}$.
Since $\mathcal{O}_{(\bar{\wec{x}},\wec{x})}\not=\emptyset$, 
it follows that $(\bar{\wec{x}},\wec{x})\equiv_{\alpha}(y,\dots,y,\wec{x})$.
Thus, $(y,\dots,y,\wec{x})\in\al B[\alpha]$ and
the tuples $(\bar{\wec{x}},\wec{x})$ and 
$(y,\dots,y,\wec{x})$ belong to the same $\alpha_{\al B[\alpha]}$-classes
$o^{\al B[\alpha]}/\alpha_{\al B[\alpha]}$ ($o\in\mathcal{O}$).
Consequently, 
$\mathcal{O}_{(\bar{\wec{x}},\wec{x})}=\mathcal{O}_{(y,\dots,y,\wec{x})}$. 

For every 
$o\in\mathcal{O}_{(y,\dots,y,\wec{x})}=\mathcal{O}_{(\bar{\wec{x}},\wec{x})}$ 
the $\mathcal{O}$-tuple 
$\otupl{y}$ that has $y$
in position $o$ and zeros (i.e., ${o'}^{\bar{\al B}}$) 
in all other positions $o'$ is an
element of $\MBbaralpha$, and 
the $\mathcal{O}$-tuple 
$\otupl{(y,\dots,y,\wec{x})}$ 
that has $(y,\dots,y,\wec{x})=(y,\dots,y,x_{\ee+2},\dots,x_n)$ 
in position $o$ and 
zeros (i.e., ${o'}^{\al B[\alpha]}=({o'}^{\al B_1},\dots,{o'}^{\al B_n})$)
in all other positions $o'$ is an
element of $\MBsatalpha$.
As before, the identification of $\MBsatalpha$
with $\prod_{i=1}^n\MBialpha$ yields that 
$\otupl{(y,\dots,y,\wec{x})}=
(\otupl{y},\dots,\otupl{y},\otupl{x_{\ee+2}},\dots,\otupl{x_n})$.
Hence, if we apply
the equivalence of conditions (a) and (a)$'$ to the tuple
$(y,\dots,y,\wec{x})\in\al B[\alpha]$, we obtain that 
(b) $\Leftrightarrow$ (b)$'$ for the condition
\begin{enumerate}
\item[(b)$'$]
$\sum_{i=1}^{\ee+1}\psi_i(\otupl{y})+\sum_{i=\ee+2}^n \psi_i(\otupl{x_i})=0$
for all $o\in\mathcal{O}_{(y,\dots,y,\wec{x})}=
\mathcal{O}_{(\bar{\wec{x}},\wec{x})}$.
\end{enumerate}
Thus, our claim (a) $\Leftrightarrow$ (b) will follow if we prove that
(a)$'$ $\Leftrightarrow$ (b)$'$.

The only difference between conditions (a)$'$ and (b)$'$ are in the first 
$\ee+1$ summands. Therefore we will be done if we show that
\begin{equation}
\label{eq-q-sum}
\sum_{i=1}^{\ee+1}\psi(\otupl{x_i})=\sum_{i=1}^{\ee+1}\psi_i(\otupl{y})
\quad
\text{for every 
$o\in\mathcal{O}_{(\bar{\wec{x}},\wec{x})}=\mathcal{O}_{(y,\dots,y,\wec{x})}$.}
\end{equation}
So, let 
$o\in\mathcal{O}_{(\bar{\wec{x}},\wec{x})}=\mathcal{O}_{(y,\dots,y,\wec{x})}$.
Then, by Claim~\ref{clm-alpha}(1) and by our earlier argument on $y$,
we have that
$x_1,\dots,x_{\ee+1},y\in o^{\bar{\al B}}/\bar{\alpha}$. 
By combining the definition of $y$ in \eqref{eq-y} with 
Corollary~\ref{cor-dMaltsev} we get that
$y=x_1 +_o x_2 +_o \dots +_o x_{\ee+1}$
holds in the abelian group $\al M_{\bar{\al B}}(\bar{\alpha},o)$.
Hence, by the definition of $+$ in the module $\MBbaralpha$,
\begin{equation}
\label{eq-yo}
\otupl{y}=\otupl{x_1} + \otupl{x_2} + \dots + \otupl{x_{\ee+1}}
\end{equation}
holds in $\MBbaralpha$.
So, using first \eqref{eq-psi-s}, then the fact that
$\bar{\psi}$ is a module homomorphism $\MBbaralpha\to\al Q$, 
and finally \eqref{eq-yo},
we get that
\[
\sum_{i=1}^{\ee+1}\psi_i(\otupl{x_i})
=\sum_{i=1}^{\ee+1}\bar{\psi}(\otupl{x_i})
=\bar{\psi}\Big(\sum_{i=1}^{\ee+1}\otupl{x_i}\Bigr)
=\bar{\psi}(\otupl{y}).
\]
Similarly,
\[
\sum_{i=1}^{\ee+1}\psi_i(\otupl{y})
=(\ee+1)\bar{\psi}(\otupl{y})
=\bar{\psi}((\ee+1)\otupl{y})
=\bar{\psi}(\otupl{y}),
\]
where the last equality is true, because the 
definition of $\ee$ and the fact that 
$\otupl{y}\in\MBbaralpha\in\Mod(\al A,\pp)$
ensure that $\ee$ is a multiple of the additive order of $\otupl{y}$.
This establishes \eqref{eq-q-sum}, and hence completes the proof of the claim.
\hfill$\diamond$

\medskip

Now we define the relations $B'$ and $D$ that we will use to prove
Lemma~\ref{lm-main}. Let
\begin{gather}
B':=\{(y,\wec{x})\in\bar{B}\times\prod_{i=\ee+2}^n B_i:
(y,\dots,y,\wec{x})\in B\},\notag\\
D:=\{(y,\bar{\wec{x}})\in\bar{B}^{\ee+2}:
\text{\eqref{eq-y} holds and \ }
x_1\equiv_{\bar{\alpha}}\dots\equiv_{\bar{\alpha}}x_{\ee+1}\}.
\notag
\end{gather}

\begin{clm}
\label{clm-B'}
$B'$ has the following properties.
\begin{enumerate}
\item[(1)]
$\bar{\al B}$ has an $\mathcal{O}$-subalgebra
$\bar{\al B}'$ such that 
$B'$ is the universe of a 
subdirect $\mathcal{O}$-subalgebra $\al B'$ of 
the $\mathcal{O}$-product $\bar{\al B}'\times\prod_{i=\ee+2}^n\al B_i$
of the $\mathcal{O}$-algebras $\bar{\al B}'$ and $\al B_i$
$(\ee+2\le i\le n)$.
\item[(2)]
All the congruences 
$\bar{\alpha}':=\bar{\alpha}\restr_{\bar{\al B}'}\in\Con(\bar{\al B}')$
and $\alpha_i\in\Con(\al B_i)$ $(\ee+2\le i\le n)$ are abelian, and all
but possibly $\bar{\alpha}'$ are nontrivial. 
\item[(3)]
For the product congruence 
$\alpha':=\bar{\alpha}'\times\prod_{i=\ee+2}^n\alpha_i$
of\/ $\bar{\al B}'\times\prod_{i=\ee+2}^n\al B_i$,
the elements $o^{\al B'}$ $(o\in\mathcal{O})$ represent all
$\alpha'_{\al B'}$-classes of $\al B'$, therefore $\alpha'$
restricts to $\al B'$ as a congruence $\alpha'_{\al B'}$ of index $\le\ii$.
\end{enumerate}
Consequently, $\al B'$ satisfies all conditions in (I)--(IV), 
with the possible exception that the abelian congruence 
$\bar{\alpha}'$ in the first factor might be trivial.
Hence, if $\bar{\alpha}'$ is a nontrivial
congruence of $\bar{\al B}'$, then 
$B'\in\rel{R}^*(\al A)$ is a compatible relation of $\al A$ of $*$-arity
$n-\ee$.
\end{clm}

\noindent
{\it Proof of Claim~\ref{clm-B'}}.
Notice 
first that 
$B'$ is a compatible relation of $\al A$, because 
$B'\not=\emptyset$ by Claim~\ref{clm-y}, and
$B'$ is definable
by a primitive positive formula using the compatible relations
$\bar B=B_1$, $B_i$ ($\ee+2\le i\le n$), and $B$. 
Thus the definition of $B'$ shows that
$B'$ is, in fact, the universe of a subalgebra $\al B'$ of 
$\bar{\al B}\times\prod_{i=\ee+2}^n \al B_i$.
Moreover, \eqref{eq-o-s} implies that all
tuples $(o^{\bar{\al B}},o^{\al B_{\ee+2}},\dots,o^{\al B_n})$ ($o\in\mathcal{O}$)
are in $\al B'$, therefore by defining
$o^{\al B'}:=(o^{\bar{\al B}},o^{\al B_{\ee+2}},\dots,o^{\al B_n})$ 
for every $o\in\mathcal{O}$, $\al B'$ becomes an $\mathcal{O}$-subalgebra of
$\bar{\al B}\times \prod_{i=\ee+2}^n \al B_i$.
Claim~\ref{clm-y} implies that $\al B$ and $\al B'$ have the same 
projections onto their last $n-(\ee+1)$ coordinates, so
letting $\bar{\al B}'$ 
denote the projection of $\al B'$ onto its 
first coordinate, we get an
$\mathcal{O}$-subalgebra 
$\bar{\al B}'$ of $\bar{\al B}$
such that $\al B'$ is a subdirect $\mathcal{O}$-subalgebra of 
the $\mathcal{O}$-product $\bar{\al B}'\times\prod_{i=\ee+2}^n\al B_i$
of the $\mathcal{O}$-algebras $\bar{\al B}'$ and $\al B_i$
($\ee+2\le i\le n$).
This proves (1).

Considering $\bar{\al B}'$ as 
an algebra in the original language of $\var{V}$, we have
$\bar{\al B}'\le\bar{\al B}=\al B_1\le\al A^{p_1}$
where $p_1\le\pp$. 
Furthermore, since $\bar{\al B}$ is
isomorphic to a section of $\al A$, so is its subalgebra
$\bar{\al B}'$.
The other algebras $\al B_{\ee+2},\dots\al B_n$ are unchanged, therefore
we get that conditions (I) and (III) hold for $B'$. 

Condition (II) for $\al B$ implies that
$\bar{\alpha}'$ is an abelian congruence of $\bar{\al B}'$ and
$\alpha_i$ is a nontrivial abelian congruence of $\al B_i$ for every $i$
($\ee+2\le i\le n$). This proves (2) and that (II) holds for $\al B'$
with the possible exception that $\bar{\alpha}'$ may be trivial.

Finally, since 
the elements 
$o^{\al B}=(o^{\bar{\al B}},\dots,o^{\bar{\al B}},o^{\al B_{\ee+2}},\dots,o^{\al B_n})$
($o\in\mathcal{O}$)
represent all $\alpha_{\al B}$-classes in $\al B$, the definition of
$\al B'$ implies that the elements
$o^{\al B'}=(o^{\bar{\al B}},o^{\al B_{\ee+2}},\dots,o^{\al B_n})$
($o\in\mathcal{O}$)
represent all 
$\alpha'_{\al B'}$-classes in $\al B'$.
Hence 
$\alpha'_{\al B'}$
has index $\le|\mathcal{O}|=\ii$.
This proves 
(3) and
that condition (IV) also hold for $B'$.

Thus 
we have $B'\in\rel{R}^*(\al A)$ 
if $\bar{\alpha}'$ is a nontrivial congruence of $\bar{\al B}'$.
It is clear from the construction of $B'$ that its $*$-arity is $n-\ee$,
completing the proof of 
Claim~\ref{clm-B'}.
\hfill$\diamond$

\smallskip

\begin{clm}
\label{clm-D}
$D$ is the universe of a subalgebra $\al D$ of $\bar{\al B}^{\ee+1}$.
\end{clm}

\noindent
{\it Proof of Claim~\ref{clm-D}}.
The following fact will be useful:
property \eqref{eq-diffterm3} of the difference term $d$
for $\bar{\al B}$ and its abelian congruence $\bar{\alpha}$ 
is equivalent to saying that
\[
D_1:=\bigl\{\bigl(d^{\bar{\al B}}(u,v,w),u,v,w\bigr)\in\bar{B}^4: 
u\equiv_{\bar{\alpha}}v\equiv_{\bar{\alpha}}w\bigr\}
\]
is the universe of a subalgebra of
$\bar{\al B}^4$. 
Hence $D_1$ is a compatible relation of $\al A$.
Observing that all four coordinates of the tuples in $D_1$ are 
$\bar{\alpha}$-related, one can easily check that 
$D$ is definable by a primitive positive formula using $D_1$.
Clearly, $D\not=\emptyset$.
Therefore it follows that $D$ is a compatible relation of $\al A$.
The construction of $D$ shows that $D$ is, in fact, 
the universe of a subalgebra
of $\bar{\al B}^{\ee+1}=\al B_1^{\ee+1}$.
\hfill$\diamond$

\smallskip

\begin{clm}
\label{clm-entailB}
$\{B_1,\dots,B_n,B',D\}\dentails B$.
\end{clm}

\noindent
{\it Proof of Claim~\ref{clm-entailB}}.
Let
\[
W:=\{(y,\bar{\wec{x}},\wec{x})\in\bar{B}^{\ee+2}\times\prod_{i=\ee+2}^n  B_i:
(y,\bar{\wec{x}})\in D\ \ \text{and}\ \ (y,\wec{x})\in B'\}.
\]
Our goal is to show that
\begin{enumerate}
\item[(i)]
The projection map $W\to \proj_{[n+1]\setminus\{1\}}(W)$ that omits 
the first coordinate of $W$ 
is one-to-one, and
\item[(ii)]
its image is
$B$, that is, 
\begin{equation}
\label{eq-ontoB}
\proj_{[n+1]\setminus\{1\}}(W)=B.
\end{equation}
\end{enumerate}
(i)--(ii) will imply the statement of Claim~\ref{clm-entailB}
for the following reason.
By (ii), $W$ is nonempty, and it
is easy to see from the definition of $W$ that 
$W$ is definable
by a quantifier-free primitive positive formula, using 
$B_1,\dots,B_n$, $B'$ and $D$. 
Hence 
$\{B_1,\dots,B_n,B',D\}\dentails W$.
By 
(i) and (ii),
$B$ is obtained from $W$ by bijective projection, so
$\{W\}\dentails B$.
By the transitivity of $\dentails$ we get that
$\{B_1,\dots,B_n,B',D\}\dentails B$, 
as claimed.

To prove (i) observe that the
projection map $W\to \proj_{[n+1]\setminus\{1\}}(W)$ is one-to-one,
because for every element $(y,\bar{\wec{x}},\wec{x})\in W$ we have
$(y,\bar{\wec{x}})\in D$, and hence $y$ is uniquely determined
by $\bar{\wec{x}}$, via \eqref{eq-y}.

To prove the inclusion $\supseteq$ in \eqref{eq-ontoB}, let
$(\bar{\wec{x}},\wec{x})\in B$ and define $y\in\bar{B}$ by \eqref{eq-y}.
Then Claim~\ref{clm-y} implies that $(y,\dots,y,\wec{x})\in B$,
so $(y,\wec{x})\in B'$.
On the other hand, by Claim~\ref{clm-alpha}(2) we have that 
$x_1\equiv_{\bar{\alpha}}\dots\equiv_{\bar{\alpha}}x_{\ee+1}$,
therefore $(y,\bar{\wec{x}})\in D$.
This shows that $(y,\bar{\wec{x}},\wec{x})\in W$, and hence
$(\bar{\wec{x}},\wec{x})\in \proj_{[n+1]\setminus\{1\}}(W)$.

For the inclusion $\subseteq$ in \eqref{eq-ontoB}, assume that 
$(\bar{\wec{x}},\wec{x})\in \proj_{[n+1]\setminus\{1\}}(W)$.
Then there exists $y\in\bar{B}$ such that 
$(y,\bar{\wec{x}},\wec{x})\in W$. Let us fix such a $y$.
By the definition of $W$ it follows that 
$(y,\bar{\wec{x}})\in D$ and $(y,\wec{x})\in B'$.
The latter implies that $(y,\dots,y,\wec{x})\in\al B$, while the 
former implies that 
$x_1\equiv_{\bar{\alpha}}\dots\equiv_{\bar{\alpha}}x_{\ee+1}$
and the equality in \eqref{eq-y} holds for $y$.
Since $d$ is idempotent, we get that 
$x_1\equiv_{\bar{\alpha}}\dots\equiv_{\bar{\alpha}}x_{\ee+1}
\equiv_{\bar{\alpha}}y$, so 
$(\bar{\wec{x}},\wec{x})\equiv_\alpha(y,\dots,y,\wec{x})$.
Since $(y,\dots,y,\wec{x})\in\al B$, this shows that
$(\bar{\wec{x}},\wec{x})\in B[\alpha]$.
Therefore Claim~\ref{clm-y} applies, and  we obtain that 
$(\bar{\wec{x}},\wec{x})\in B$.
\hfill$\diamond$

\medskip

Now we are ready to prove the statements (1)--(3) of Lemma~\ref{lm-main}.
We will use the notation and the conclusions of
Claims~\ref{clm-B'}--\ref{clm-entailB}.
There are two cases to consider. 

\smallskip

{\sc Case 1:} $\bar{\alpha}'\in\Con(\bar{\al B}')$ is nontrivial.

Then we know from Claim~\ref{clm-B'} that 
$B'\in\rel{R}^*(\al A)$ is a compatible relation of $\al A$ of $*$-arity
$n-\ee$, so (1) holds. For (2), choose $\al D'$ to be any subalgebra 
of $\bar{\al B}^2$, say $\al D'=\bar{\al B}^2$.
Since $\bar{\al B}=\al B_1$, Claim~\ref{clm-D} proves that 
(2) also holds. Finally, (3) follows from Claim~\ref{clm-entailB}
(the choice of $D'$ is irrelevant).

\smallskip

{\sc Case 2:} $\bar{\alpha}'\in\Con(\bar{\al B}')$ is trivial.

By Claim~\ref{clm-B'},
$\bar{\al B}'$ is an $\mathcal{O}$-subalgebra of $\bar{\al B}$,
therefore $o^{\bar{\al B}'}=o^{\bar{\al B}}$ for all $o\in\mathcal{O}$.
Since the elements $o^{\bar{\al B}}$ ($o\in\mathcal{O}$)
represent all $\bar{\alpha}$-classes of $\bar{\al B}$ and
$\bar{\alpha}'=\bar{\alpha}\restr_{\bar{\al B}'}$ is a trivial
congruence of $\bar{\al B}'$, we get that the underlying set
of $\bar{\al B}'$ is $\bar{B}'=\mathcal{O}^{\bar{\al B}}\,
(=\mathcal{O}^{\bar{\al B}'})$,
and $o_1^{\bar{\al B}}=o_2^{\bar{\al B}}$ 
whenever $o_1^{\bar{\al B}}\equiv_{\bar{\alpha}}o_2^{\bar{\al B}}$ 
in $\bar{\al B}$
($o_1,o_2\in\mathcal{O}$).
It follows that the assignment $o^{\bar{\al B}}\mapsto o^{\bar{\al B}}/\bar{\alpha}$
is an isomorphism $\bar{\al B}'\to\bar{\al B}/\bar{\alpha}$. Hence
the map $\nu\colon\bar{\al B}\to\bar{\al B}'$ that assigns to every
$\bar{b}\in\bar{B}$ the unique element 
$o^{\bar{\al B}}\in\mathcal{O}^{\bar{\al B}}=\bar{B}'$
such that $o^{\bar{\al B}}\equiv_{\bar{\alpha}}\bar{b}$
is a well-defined homomorphism with kernel $\bar{\alpha}$.

Now let $\al D'$ denote the subalgebra of $\bar{\al B}^2$ whose universe
\[
D'=\{(x,y)\in \bar{B}^2:y=\nu(x)\}
\] 
is the graph of $\nu$, and let 
$\al B''=\proj_{[n-\ee]\setminus\{1\}}(\al B')$ 
be the image of $\al B'$ under the 
projection homomorphism onto all coordinates except the first.
Since $\bar{\al B}=\al B_1$, 
Claim~\ref{clm-D} and the definition of $\al D'$
imply that condition (2) of Lemma~\ref{lm-main} holds 
for $\al D$ and $\al D'$.
It follows from the conclusions of Claim~\ref{clm-B'} that
$B''$ is a compatible relation of $\al A$ that belongs to 
$\mathcal{R}^*(\al A)$ and has $*$-arity $n-\ee-1$.
Thus, condition (1) of Lemma~\ref{lm-main} holds for $B''$ in place of $B'$.
It remains to establish that condition (3) of Lemma~\ref{lm-main}
also holds with $B''$ replacing $B'$, that is,
\begin{equation}
\label{eq-modified-entailB}
\{B_1,\dots,B_n,B'',D,D'\}\dentails B.
\end{equation}

First we prove that 
\begin{equation}
\label{eq-partial-dentails}
B'=\{(y,x_{\ee+2},\dots,x_n)\in \bar{B}\times B'': (x_{\ee+2},y)\in D'\}.
\end{equation}
To verify the inclusion $\subseteq$, 
let $(y,\wec{x})=(y,x_{\ee+2},\dots,x_n)$ 
be an arbitrary element of $\al B'$.
Clearly, $(y,\wec{x})\in \bar{B}\times B''$, so we need to
show that $(x_{\ee+2},y)\in D'$, that is, $y=\nu(x_{\ee+2})$.
By Claim~\ref{clm-B'}, $\al B'$ is a subdirect $\mathcal{O}$-subalgebra
of $\bar{\al B}'\times\al B_{\ee+2}\times\dots\times\al B_n$, and the
elements $o^{\al B'}=(o^{\bar{\al B}},o^{\al B_{\ee+2}},\dots,o^{\al B_{n}})$
represent all $\alpha'_{\al B'}$-classes of $\al B'$.
Also,
recall from \eqref{eq-B-s}--\eqref{eq-o-s}
that at the beginning of the proof of Lemma~\ref{lm-main}
we arranged that 
$\al B_{\ee+2}=\bar{\al B}$, $\alpha_{\ee+2}=\bar{\alpha}$, and
$o^{\al B_{\ee+2}}=o^{\bar{\al B}}$ for all $o\in\mathcal{O}$.
Therefore, for the given element $(y,\wec{x})\in\al B'$, 
there exists $o\in\mathcal{O}$
such that $o^{\al B'}\in(y,\wec{x})/\alpha'$, and so
$y\equiv_{\bar{\alpha}'} o^{\bar{\al B}}=o^{\al B_{\ee+2}}\equiv_{\alpha_{\ee+2}} x_{\ee+2}$.
Since $\bar{\alpha}'$ is trivial and 
$\al B_{\ee+2}=\bar{\al B}$, $\alpha_{\ee+2}=\bar{\alpha}$, we get that
$y=o^{\bar{\al B}}\equiv_{\bar{\alpha}} x_{\ee+2}\, (\in\bar{B})$.
Thus, $y=\nu(x_{\ee+2})$. 
This completes the proof of $\subseteq$ in
\eqref{eq-partial-dentails}.

For $\supseteq$, assume that
$(y,x_{\ee+2},\dots,x_n)\in \bar{B}\times B''$ is such that
$(x_{\ee+2},y)\in D'$, that is, $y=\nu(x_{\ee+2})$.
Since $(x_{\ee+2},\dots,x_n)\in B''=\proj_{[n-\ee]\setminus\{1\}}(B')$,
there exists $y'\in\bar{B}'$ such that
$(y',x_{\ee+2},\dots,x_n)\in B'$.
The inclusion $\subseteq$ in \eqref{eq-partial-dentails}
proved in the preceding paragraph implies that 
$y'=\nu(x_{\ee+2})$.
Thus $y=y'$ and $(y,x_{\ee+2},\dots,x_n)=(y',x_{\ee+2},\dots,x_n)\in B'$.
This completes the proof of \eqref{eq-partial-dentails}. 

By \eqref{eq-partial-dentails}, $B'$ is definable by 
a quantifier-free primitive positive formula, using 
$\bar{B}=B_1$, $B''$ and $D'$. Hence $\{B_1,B'',D'\}\dentails B'$.
It follows that
\[
\{B_1,\dots,B_n,B'',D,D'\}\dentails \{B_1,\dots,B_n,B',D\}.
\]
Combining this with the result of Claim~\ref{clm-entailB}, and using  
the transitivity of $\dentails$, we obtain the desired conclusion
\eqref{eq-modified-entailB}.

The proof of Lemma~\ref{lm-main} is complete.   
\end{proof}

\begin{cor}
\label{cor-main}
If some power $\al A^p$ of $\al A$ with $1\le p\le \pp$ has a subalgebra 
that is isomorphic to a section of $\al A$ with
a nontrivial abelian congruence, then
$\rel{R}_{\pp\hh(\ee+1)}(\al A)\dentails\rel{R}^*(\al A)$.
\end{cor}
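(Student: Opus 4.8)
The plan is to prove, by strong induction on the $*$-arity $n$ of a relation $B\in\rel{R}^*(\al A)$, that $\rel{R}_{\pp\hh(\ee+1)}(\al A)\dentails B$; since every member of $\rel{R}^*(\al A)$ has a finite $*$-arity, this gives the corollary. By Remarks~\ref{rem-about-rhqc}(a),(b) the hypothesis of the corollary is equivalent to $\Mod(\al A,\pp)\ne\emptyset$, and hence yields $\ee\ge2$ and $\hh\ge1$. Consequently, for every positive integer $p\le\pp$ one has $p\le\pp\hh(\ee+1)$, $p(\ee+1)\le\pp\hh(\ee+1)$, and $2p\le\pp\hh(\ee+1)$ — this is precisely the numerology we will need. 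Fix $B\in\rel{R}^*(\al A)$ of $*$-arity $n$ with witnessing data as in $(*)$: subalgebras $\al B_i\le\al A^{p_i}$ $(p_i\le\pp)$, nontrivial abelian $\alpha_i\in\Con(\al B_i)$, the subdirect product $\al B\le\prod_{i=1}^n\al B_i$, and $\alpha:=\prod_{i=1}^n\alpha_i$ with $\alpha_{\al B}$ of index $\le\ii$.

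If $n\le\hh(\ee+1)$, then $B$, as a compatible relation of $\al A$, has arity $\sum_{i=1}^n p_i\le\pp n\le\pp\hh(\ee+1)$, so $B\in\rel{R}_{\pp\hh(\ee+1)}(\al A)$ and there is nothing to prove. So assume $n>\hh(\ee+1)$. First I would dispose of the $\cap$-irreducibility hypothesis of Lemma~\ref{lm-main}. The lattice of subalgebras of the $\alpha$-saturation $\al B[\alpha]$ of $\al B$ in $\prod_{i=1}^n\al B_i$ is finite, so $\al B=\bigcap_{j=1}^m\al B^{(j)}$ where the $\al B^{(j)}$ are the $\cap$-irreducible subalgebras of $\al B[\alpha]$ that contain $\al B$. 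Each $\al B^{(j)}$ is a subdirect product of $\al B_1,\dots,\al B_n$ (it sits between $\al B$ and $\prod_{i=1}^n\al B_i$), its $\alpha$-saturation is again $\al B[\alpha]$ (so it is $\cap$-irreducible in the subalgebra lattice of its own $\alpha$-saturation), and since every $\alpha$-class meeting $B^{(j)}\subseteq B[\alpha]$ already meets $B$, the restriction $\alpha_{\al B^{(j)}}$ has index $\le\ii$. Thus each $\al B^{(j)}$ lies in $\rel{R}^*(\al A)$ with the same $*$-arity $n$. Since $B=\bigcap_{j=1}^m B^{(j)}$ is a nonempty intersection of relations of a common arity — a $\dentails$-construct by Theorem~\ref{thm-dentails} — the transitivity of $\dentails$ reduces the task to proving $\rel{R}_{\pp\hh(\ee+1)}(\al A)\dentails B^{(j)}$ for each $j$. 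Hence we may and do assume that $\al B$ itself is $\cap$-irreducible in the subalgebra lattice of $\al B[\alpha]$.

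Now invoke Lemma~\ref{lm-main}: since $n>\hh(\ee+1)$ and $\al B$ is $\cap$-irreducible in $\al B[\alpha]$, there are $B'\in\rel{R}^*(\al A)$ of $*$-arity $\le n-\ee$, together with subalgebras $\al D\le\al B_i^{\ee+1}$ and $\al D'\le\al B_i^2$ for some $i\in[n]$, satisfying $\{B_1,\dots,B_n,B',D,D'\}\dentails B$. As compatible relations of $\al A$, each $B_i$ has arity $p_i\le\pp$, $D$ has arity $p_i(\ee+1)\le\pp(\ee+1)$, and $D'$ has arity $2p_i\le2\pp$; by the numerology from the first paragraph ($\ee\ge2$, $\hh\ge1$) all of these are $\le\pp\hh(\ee+1)$, so $B_1,\dots,B_n,D,D'\in\rel{R}_{\pp\hh(\ee+1)}(\al A)$. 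Because $\ee\ge2$, the $*$-arity of $B'$ satisfies $\le n-\ee<n$, so the induction hypothesis gives $\rel{R}_{\pp\hh(\ee+1)}(\al A)\dentails B'$. Chaining $\rel{R}_{\pp\hh(\ee+1)}(\al A)\dentails\{B_1,\dots,B_n,B',D,D'\}\dentails B$ through the transitivity of $\dentails$ completes the induction, and with it the corollary.

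The step I expect to be the main obstacle is the reduction to the $\cap$-irreducible case: one must confirm that the meet-irreducible summands $\al B^{(j)}$ genuinely satisfy conditions (I)--(IV), with the same $*$-arity $n$ — the delicate point being that $\alpha_{\al B^{(j)}}$ retains index at most $\ii$, which relies on $\al B^{(j)}$ being contained in $\al B[\alpha]$. Everything else is bookkeeping of arities, and the only place the hypothesis $\Mod(\al A,\pp)\ne\emptyset$ (through $\ee\ge2$ and $\hh\ge1$) is used is to fit the auxiliary relations $B_i$, $D$, $D'$, and the reduced bound $n-\ee$, under the target bound $\pp\hh(\ee+1)$.
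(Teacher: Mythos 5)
Your proof is correct and follows essentially the same route as the paper's: induction on $*$-arity, a base case from the arity bound, reduction to the $\cap$-irreducible case via the meet decomposition of $\al B$ in the subalgebra lattice of $\al B[\alpha]$ (with the same verification that each meet-irreducible factor stays in $\rel{R}^*(\al A)$ with the same $*$-arity and index), an application of Lemma~\ref{lm-main}, and the same arity bookkeeping using $\ee\ge2$, $\hh\ge1$. The only difference is cosmetic: the paper observes that the index of $\alpha_{\hat{\al B}}$ equals that of $\alpha_{\al B[\alpha]}$ directly, while you phrase the same fact in terms of $\alpha$-classes meeting $B$.
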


\begin{proof}
As we noted in Remarks~\ref{rem-about-rhqc}%
(b),
the assumption that some $\al A^p$ with
$1\le p\le \pp$ has a subalgebra isomorphic to a section of $\al A$
with a nontrivial abelian congruence implies that 
$\hh\ge1$ and $\ee\ge2$. 

To prove the corollary we have to show that
\begin{equation}
\label{eq-ehq-B}
\rel{R}_{\pp\hh(\ee+1)}(\al A)\dentails B
\end{equation}
for every $B\in\rel{R}^*(\al A)$.
We proceed by induction on the $*$-arity $n$ of $B$.
We will use the same notation for the data 
$\al B_i$, $\alpha_i$ ($i\in[n]$) and $\alpha$ 
associated to $B$
as in (I)--(IV).
If $n\le\hh(\ee+1)$, 
then the arity of $B$ (as a compatible relation of $\al A$)
is $\sum_{i=1}^n p_i\le n\pp$, therefore $B\in\rel{R}_{\pp\hh(\ee+1)}(\al A)$ and
\eqref{eq-ehq-B} is trivial.
So assume that $n>\hh(\ee+1)$. 
The algebra $\al B$ is the intersection of a family
$\mathcal{I}$ of $\cap$-irreducible
subalgebras of $\al B[\alpha]$.
For every $\hat{\al B}\in\mathcal{I}$ we have 
$\al B\le\hat{\al B}\le\al B[\alpha]$, therefore
$\hat{\al B}$ is a subdirect product of $\al B_1,\dots,\al B_n$
and
$\hat{\al B}[\alpha]=\al B[\alpha]$. The latter implies that
the index of $\alpha_{\hat{\al B}}$ in $\hat{\al B}$ is the same as the
index of $\alpha_{\al B}$ in $\al B$, because both are equal to the index of
$\alpha_{\al B[\alpha]}$ in $\al B[\alpha]$.
This shows that for every $\hat{\al B}\in\mathcal{I}$,
$\hat{B}$ is a relation in $\rel{R}^*(\al A)$ with $*$-arity $n$.
Since $\cap$ is an $\dentails$-construct, it suffices
to prove \eqref{eq-ehq-B} for the case when
$\al B$ is a $\cap$-irreducible subalgebra of $\al B[\alpha]$.
Then, by Lemma~\ref{lm-main},
$\{B_1,\dots,B_n,B',D,D'\}\dentails B$ 
for some $B'\in\rel{R}^*(\al A)$ of $*$-arity
$\le n-\ee$ and some compatible 
relations $D$ and $D'$ 
of $\al A$
that are universes
of algebras $\al D\le\al B_i^{\ee+1}\le\al A^{p_i(\ee+1)}$ 
and $\al D'\le\al B_i^2\le\al A^{2p_i}$ 
for some $i$
($i\in[n]$).
Now the induction hypothesis implies that
$\rel{R}_{\pp\hh(\ee+1)}(\al A)\dentails B'$, because $n-\ee<n$, 
while
the inequalities $\hh\ge1$, $\ee\ge2$ from
the first paragraph of this 
proof imply that
$B_1,\dots,B_n,D,D'\in\rel{R}_{\pp\hh(\ee+1)}(\al A)$, because 
$p_i\le 2p_i\le p_i(\ee+1)\le\pp(\ee+1)\le\pp\hh(\ee+1)$.
Hence 
\[
\rel{R}_{\pp\hh(\ee+1)}(\al A)\dentails \{B_1,\dots,B_n,B',D,D'\}\dentails B,
\]
so \eqref{eq-ehq-B} 
follows by the transitivity of $\dentails$.
\end{proof}

Now we are ready to prove Theorem~\ref{thm-main}.

\begin{proof}[Proof of Theorem~\ref{thm-main}]
Assume that $\al A$ satisfies the hypotheses of Theorem~\ref{thm-main},
that is, in addition to our global assumptions in this section
that $\al A$ is a finite algebra with a $k$-parallelogram term, $\al A$ 
also satisfies the split centralizer condition. (For the definition
of the split centralizer condition, see the Introduction.)
Our aim is to prove that
\begin{equation}
\label{eq-ToProve2}
\rel{R}_\ccc(\al A)\dentails\rel{R}(\al A).
\end{equation}
In addition to the notation
$\rel{R}^*(\al A)$ introduced before Lemma~\ref{lm-main}, we will
write $\rel{R}_{\crit}(\al A)$ for the set of all critical 
relations of $\al A$. (Critical relations are defined in Section~\ref{prelim},
subsection~\ref{prelim}.2.) 

First we will argue that 
\begin{equation}
\label{eq-crit-entails}
\rel{R}_{\crit}(\al A)\dentails\rel{R}(\al A).
\end{equation}
Every compatible relation of $\al A$ is an intersection of
$\cap$-irreducible compatible relations. Furthermore, if a
$\cap$-irreducible compatible relation of $\al A$ is not critical
(i.e., not directly indecomposable), then up to a permutation
of coordinates, it has the form $\rho\times A^\ell$, so it follows
that $\rho$ is a critical relation of $\al A$; 
$A$ itself is clearly a critical relation of $\al A$.
Therefore every compatible relation of $\al A$ can be obtained from
critical relations of $\al A$ by product and intersection. 
Since product and intersection are $\dentails$-constructs, 
\eqref{eq-crit-entails} follows.

Theorem~\ref{thm-reduction} implies that
\begin{equation}
\label{eq-*-entails}
\rel{R}_{\ccc_0}(\al A)\cup\rel{R}^*(\al A)\dentails\rel{R}_{\crit}(\al A),
\end{equation}
as we will show now.
Let $C$ be a critical relation of $\al A$ of arity $n$.
It is clear that 
$\rel{R}_{\ccc_0}(\al A)\cup\rel{R}^*(\al A)\dentails C$
if $n\le \ccc_0$ 
(i.e., if $C\in\rel{R}_{\ccc_0}(\al A)$), so let us assume that 
$n>\ccc_0$. It follows from the definition of $\ccc_0$ that
$n\ge\max(3,k)$. 
Since $\al A$ has a $k$-parallelogram term
and satisfies the split centralizer condition,  
all hypotheses of Theorem~\ref{thm-reduction} are satisfied.
Hence the theorem yields the existence of a compatible relation
$B$ of $\al A$ such that $B\in\rel{R}^*(\al A)$ and
$\rel{R}_{1+\pp}(\al A)\cup\{B\}\dentails C$.
Since $1+\pp\le\ccc_0$, this 
implies that $\rel{R}_{\ccc_0}(\al A)\cup\rel{R}^*(\al A)\dentails C$,
and
proves \eqref{eq-*-entails}.

If no power $\al A^p$ ($1\le p\le \pp$) of $\al A$ has a subalgebra 
that is isomorphic to a section of $\al A$ with a nontrivial
abelian congruence, then $\rel{R}^*(\al A)=\emptyset$ and 
$\Mod(\al A,\pp)=\emptyset$. As we saw in Remarks~\ref{rem-about-rhqc}%
(b),
the latter implies that $\hh=0$ and hence $\ccc=\ccc_0$.
Thus, in this case \eqref{eq-crit-entails} and \eqref{eq-*-entails} combine to
show that
\[
\rel{R}_{\ccc}(\al A)
=\rel{R}_{\ccc_0}(\al A)
\stackrel{\eqref{eq-*-entails}}{\dentails}\rel{R}_{\crit}(\al A)
\stackrel{\eqref{eq-crit-entails}}{\dentails}\rel{R}(\al A).
\]
Hence  \eqref{eq-ToProve2}
follows by the transitivity of $\dentails$.

In the opposite case, when some $\al A^p$ ($1\le p\le\pp$)
has a subalgebra isomorphic to a section of $\al A$ with 
a nontrivial abelian congruence, 
Corollary~\ref{cor-main} shows that
\begin{equation}
\label{eq-ehq-entails}
\rel{R}_{\pp\hh(\ee+1)}(\al A)\dentails\rel{R}^*(\al A).
\end{equation}
Since $\ccc=\max(\ccc_0,\pp\hh(\ee+1))$, we have that
$\rel{R}_{\ccc_0}(\al A)\cup\rel{R}_{\pp\hh(\ee+1)}(\al A)\subseteq
\rel{R}_{\ccc}(\al A)$,
so 
\eqref{eq-crit-entails}, \eqref{eq-*-entails}, and 
\eqref{eq-ehq-entails} together imply that
\[
\rel{R}_{\ccc}(\al A)
\stackrel{\eqref{eq-ehq-entails}}{\dentails}
    \rel{R}_{\ccc}(\al A)\cup\rel{R}^*(\al A)
\stackrel{\eqref{eq-*-entails}}{\dentails} \rel{R}_{\crit}(\al A)
\stackrel{\eqref{eq-crit-entails}}{\dentails} \rel{R}(\al A).
\]
By the transitivity of $\dentails$ this shows that 
\eqref{eq-ToProve2} holds.

This proves \eqref{eq-ToProve2} in all cases.
In view of Theorem~\ref{thm-WZ}, \eqref{eq-ToProve2}
is sufficient to conclude that $\al A$ is dualizable.
\end{proof}

\section{Applications}\label{appl}

In this section we apply the main
theorem of the paper to establish dualizability within
some well known classes of algebras.
Some of these results were known before.

In the first part of the section we identify
some conditions on a variety $\mathcal V$
which guarantee that every finite member of $\mathcal V$
is dualizable. We end the section by proving that
if $\m a$ is a finite algebra with a parallelogram term
and $\Su\Pd(\m a)$ is a variety, then $\m a$ is dualizable
(although some of the other algebras in 
$\Su\Pd(\m a)$ need not be dualizable).

We will often apply Theorem~\ref{thm-main} to prove dualizability
simultaneously for all members of a class $\mathcal{C}$ of finite algebras
with parallelogram terms,
where $\mathcal{C}$ is closed under taking subalgebras.
In such cases, to get the desired conclusion,  
it will be enough to check that for every $\al A\in\mathcal{C}$,
each relevant triple $(\delta,\theta,\nu)$ of $\al A$ is split
by a triple $(\alpha,\beta,\kappa)$ relative to $\Su\Pd(\al A)$.
Indeed, if every $\al A\in\mathcal{C}$ has this property, then
for every $\al A\in\mathcal{C}$ and $\al B\le\al A$ we have that
$\al B\in\mathcal{C}$, so every relevant triple of $\al B$ is split
by a triple relative to $\Su\Pd(\al B)$, and therefore relative to
$\Su\Pd(\al A)$ as well. 
This shows that every $\al A\in\mathcal{C}$ satisfies the split centralizer
condition, which implies by Theorem~\ref{thm-main} that every 
$\al A\in\mathcal{C}$ is dualizable.

We will refer to some commutator
identities by the number assigned to them
in \cite[Chapter~8]{freese-mckenzie}:
\begin{align}
[x\wedge y,y]&= x\wedge[y,y],\tag{C1}\\
[x,y]&= x\wedge y.\tag{C3}\\
[1,x]&= x.\tag{C8}
\end{align}

\noindent
An algebra satisfying a given (Ci), $i\in \{1,3,8\}$, 
may be called a 
\emph{(Ci)-algebra}. It is known that if 
$\mathcal V$ is congruence modular, then the subclass
of (Ci)-algebras in $\mathcal V$
is closed under the formation of finite subdirect products
and quotient algebras. (See 
\cite[Chapter~8]{freese-mckenzie}.)

We have met (C1) before: any residually 
small congruence modular variety consists
of 
(C1)-algebras, 
and conversely any 
congruence modular variety
generated by a finite algebra whose subalgebras
are all (C1) is residually small.

(C3)-algebras are also called \emph{neutral}.
{}From \cite[Chapter~8]{freese-mckenzie}
we know that an algebra is neutral if and only if
it has no nontrivial abelian congruence intervals.
Therefore, we will call an interval in a congruence lattice 
\emph{neutral} if it has
no nontrivial abelian subintervals.

\begin{lm}\label{solvable-or-perfect}
Let $\mathcal V$ be a congruence modular
variety in which every finite
subdirectly irreducible algebra is 
either solvable or is a (C8)-algebra.
If $\m a\in {\mathcal V}$ is a finite algebra,
then 
\begin{enumerate}
\item[{\rm(1)}]
$\m a$ 
has a unique pair $(\sigma,\rho)$
of complementary factor congruences such that 
$\m a\cong \m a/\sigma\times \m a/\rho$, 
$\m a/\sigma$ is solvable and $\m a/\rho$ is a (C8)-algebra; moreover
\item[{\rm(2)}]
every congruence $\chi$ on $\al A$
is a product congruence relative to the factorization
$\m a\cong \m a/\sigma\times \m a/\rho$, meaning that 
$\chi = (\chi\vee\sigma)\wedge(\chi\vee\rho)$.
\end{enumerate}
\end{lm}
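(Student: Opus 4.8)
The plan is to exploit the dichotomy between \emph{solvable} and \emph{(C8)} subdirectly irreducible algebras, which are in some sense orthogonal, and to show that $\al A$ decomposes accordingly. First I would consider a subdirect decomposition $\al A \le \prod_{j} \al A/\mu_j$ of $\al A$ by its completely meet irreducible congruences, with each $\al A/\mu_j$ subdirectly irreducible and hence (by hypothesis) either solvable or a (C8)-algebra. Split the index set into $J_s$ (the solvable factors) and $J_c$ (the (C8) factors). Let $\sigma := \bigwedge_{j\in J_s}\mu_j$ and $\rho := \bigwedge_{j\in J_c}\mu_j$. Since $\var V$ is congruence modular and the solvable algebras in $\var V$ form a subclass closed under finite subdirect products and quotients (and likewise the (C8)-algebras, as recalled just before the lemma in the excerpt), $\al A/\sigma$ is solvable and $\al A/\rho$ is a (C8)-algebra. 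The content of part (1) is then that $\sigma\wedge\rho = 0$ and $\sigma\vee\rho = 1$, i.e. $\sigma$ and $\rho$ are complementary factor congruences.

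The inclusion $\sigma\wedge\rho = 0$ is immediate since $\sigma\wedge\rho = \bigwedge_j \mu_j = 0$. The key point is $\sigma\vee\rho = 1$, and for this I would use that a solvable congruence and a neutral-type (C8) quotient cannot overlap: more precisely, I would argue that $[\sigma\vee\rho, \sigma\vee\rho] \le \ldots$ forces $\sigma\vee\rho$ to be simultaneously "above solvable" and "below (C8)" in a way that pushes it to $1$. Concretely, consider the algebra $\al A/(\sigma\vee\rho)$: it is a quotient of the solvable algebra $\al A/\sigma$, hence solvable, and also a quotient of the (C8)-algebra $\al A/\rho$, hence (C8). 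A solvable (C8)-algebra must be trivial, because in a solvable algebra $1$ is a solvable congruence, so $[1,1] < 1$ (unless $1 = 0$), contradicting (C8) which says $[1,1] = [1,\cdot\,]$ applied appropriately gives $[1,1]=1$ when the algebra is nontrivial. Hence $\al A/(\sigma\vee\rho)$ is trivial, i.e. $\sigma\vee\rho = 1$. This gives the factor-congruence pair and the isomorphism $\al A \cong \al A/\sigma \times \al A/\rho$. Uniqueness follows because any such pair $(\sigma',\rho')$ must have $\al A/\sigma'$ solvable and $\al A/\rho'$ a (C8)-algebra, and in a congruence modular variety a solvable-by-(C8) factorization is forced: the solvable radical (largest solvable congruence, or rather its complement) is an invariant, so $\sigma'$ must equal the meet of the $\mu_j$ with $\al A/\mu_j$ solvable.

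For part (2), I would show that every $\chi\in\Con(\al A)$ is a product congruence relative to $\al A\cong\al A/\sigma\times\al A/\rho$. The standard fact is that $\chi$ is a product congruence iff $\chi = (\chi\vee\sigma)\wedge(\chi\vee\rho)$, and this always holds when $\sigma,\rho$ are complementary factor congruences \emph{and the congruence lattice is modular enough}; in general one needs $\sigma,\rho$ to be a pair of factor congruences that permute and such that the interval decomposes. Since $\var V$ is congruence modular, $\sigma$ and $\rho$ permute (complementary factor congruences always do, as they arise from a direct product decomposition), so $\Con(\al A)\cong\Con(\al A/\sigma)\times\Con(\al A/\rho)$ as lattices via $\chi\mapsto(\chi\vee\sigma,\chi\vee\rho)$ — this is a classical fact about direct products. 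The equation $\chi = (\chi\vee\sigma)\wedge(\chi\vee\rho)$ is then exactly the statement that this map is injective with the stated inverse, which holds by modularity: $(\chi\vee\sigma)\wedge(\chi\vee\rho) \ge \chi$ trivially, and the reverse uses that $\sigma\wedge\rho=0$ together with modularity applied twice.

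The main obstacle I anticipate is part (2), specifically justifying that \emph{every} congruence — not just those comparable to $\sigma$ or $\rho$ — is a product congruence. The subtlety is that in a general algebra a pair of complementary factor congruences need not make $\Con(\al A)$ a direct product of the two intervals; one genuinely needs $\sigma$ and $\rho$ to permute (which holds here, being factor congruences from a direct decomposition) and a modularity argument. I would handle this carefully by invoking the standard result (e.g. from the Freese–McKenzie reference, or basic universal algebra) that for a direct product $\al A = \al A_1\times\al A_2$ with kernels $\sigma,\rho$ of the two projections, the congruence lattice is $\Con(\al A_1)\times\Con(\al A_2)$, and then translating. The solvable-versus-(C8) dichotomy and the commutator bookkeeping in part (1) are comparatively routine once the right subdirect decomposition is set up.
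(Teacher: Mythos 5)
There are two genuine gaps, one in each part.

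In part (1), you establish $\sigma\wedge\rho=0$ and $\sigma\vee\rho=1$, but you never actually show $\sigma$ and $\rho$ \emph{permute}, which is required before you can conclude $\al A\cong\al A/\sigma\times\al A/\rho$. Your parenthetical ``complementary factor congruences always do [permute], as they arise from a direct product decomposition'' is circular: permutability is part of what it means to be a pair of complementary \emph{factor} congruences, and you do not yet know that $(\sigma,\rho)$ arises from a direct decomposition --- that is precisely what is to be proved. The paper closes this by invoking that $\sigma$ is \emph{cosolvable} (i.e.\ $\al A/\sigma$ is solvable) and that in a congruence modular variety a cosolvable congruence permutes with every congruence (\cite[Theorem~6.2]{freese-mckenzie}); this is the key nontrivial input and it does not appear in your argument.

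In part (2), the claim that for a direct product one has $\Con(\al A_1\times\al A_2)\cong\Con(\al A_1)\times\Con(\al A_2)$ via $\chi\mapsto(\chi\vee\sigma,\chi\vee\rho)$ is simply false in general --- direct products typically admit \emph{skew} congruences that are not product congruences (already $\mathbb Z_2\times\mathbb Z_2$ has five congruences, of which only four are products). Part (2) is a nontrivial statement about this particular decomposition, not a formal consequence of having a direct product. The correct route, and the one the paper takes, uses what was established en route to $\sigma\wedge\rho=0$: every (completely) meet irreducible congruence $\mu$ of $\al A$ has $\al A/\mu$ either solvable or (C8), so $\mu\ge\sigma$ or $\mu\ge\rho$, and in either case $\mu=(\mu\vee\sigma)\wedge(\mu\vee\rho)$ is a product congruence. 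Since every congruence of the finite algebra $\al A$ is a meet of meet irreducibles and product congruences are closed under meet, every congruence is a product congruence. You flagged (2) as the anticipated obstacle, but the resolution you propose --- appeal to a ``classical fact about direct products'' --- does not exist; the solvable/(C8) dichotomy on the meet irreducibles is what actually does the work.

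Your choice of $\sigma=\bigwedge_{j\in J_s}\mu_j$ and $\rho=\bigwedge_{j\in J_c}\mu_j$ is fine and agrees with the paper's ``least congruence with solvable (resp.\ (C8)) quotient,'' and your argument that $\al A/(\sigma\vee\rho)$ is trivial is the same as the paper's; the uniqueness sketch is loose (``solvable radical'' is not the right object, as that would be the \emph{largest} solvable congruence, not the least with solvable quotient) but is repairable along the lines you indicate.
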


\begin{proof}{}
To prove (1)
recall that the classes
of solvable algebras and (C8)-algebras in $\mathcal V$
are closed under finite subdirect products and quotients.
Therefore, if
$\m a\in \mathcal V$ is any finite
algebra, then it has a least congruence
$\sigma$ such that $\m a/\sigma$ is solvable
and also a least congruence $\rho$ such that
$\m a/\rho$ is a (C8)-algebra. 
The congruence
$\sigma$ is contained in the kernel of any 
homomorphism of $\m a$ onto a solvable subdirectly irreducible
algebra and the congruence $\rho$
is contained in the kernel of any 
homomorphism of $\m a$ onto a (C8) subdirectly irreducible
algebra. Hence $\sigma\wedge \rho$ is contained in the kernel
of any homomorphism of $\m a$ onto a subdirectly irreducible algebra,
implying that $\sigma\wedge\rho = 0$. 

The algebra $\m a/(\sigma\vee\rho)$ is a quotient 
of the solvable algebra $\m a/\sigma$ and is also 
a quotient of the (C8)-algebra $\m a/\rho$,
so it is both solvable and (C8). This forces it to be trivial,
and therefore $\sigma\vee\rho = 1$.

Congruences $\sigma$ and $\rho$ permute, 
since $\sigma$ is cosolvable and cosolvable
congruences permute with all congruences 
by \cite[Theorem~6.2]{freese-mckenzie}. 
This completes the proof
that $(\sigma,\rho)$ is a pair of complementary factor
congruences, and the argument shows that 
$\m a/\sigma$ is solvable and $\m a/\rho$ is (C8).

If $(\sigma',\rho')$ were a second pair 
of complementary factor congruences such that 
$\m a\cong \m a/\sigma'\times \m a/\rho'$, 
$\m a/\sigma'$ is solvable and $\m a/\rho'$ is a (C8)-algebra,
then the solvability of $\m a/\sigma'$ implies that
$\sigma'\supseteq \sigma$,
and a similar argument shows that $\rho'\supseteq\rho$. 
But if $(\sigma,\rho)$ and $(\sigma',\rho')$
are pairs of complementary factor congruences where
$\sigma\subseteq \sigma'$ and $\rho\subseteq \rho'$,
then $\sigma=\sigma'$ and $\rho=\rho'$. 

For (2) notice that
the set of product congruences on $\m a\cong \m a/\sigma\times \m a/\rho$
is closed under meet, so to prove 
claim (2)
it suffices to show that 
the meet irreducible congruences on $\m a$ are product
congruences. Each one has been shown to be above $\sigma$ or $\rho$,
so is in fact a product congruence.
\end{proof}

Under the
assumptions of Lemma~\ref{solvable-or-perfect} a congruence 
$\chi$ of $\al A$ is central
(i.e., satisfies $[1,\chi]=0$) if and only if
the congruence
$\bar{\chi}:=(\chi\vee\sigma)/\sigma$ of $\al A/\sigma$ is central
and $\chi\le\rho$.
This can be verified as follows. 
By Lemma~\ref{solvable-or-perfect}(2),
$\chi$ is a product congruence relative to the factorization
$\al A\cong\al A/\sigma\times\al A/\rho$, therefore it follows that
$\chi$ is a central congruence of $\al A$ if and only if
$\bar{\chi}:=(\chi\vee\sigma)/\sigma$ is a central congruence of $\al A/\sigma$
and $(\chi\vee\rho)/\rho$ is a central congruence of $\al A/\rho$.
But $\al A/\rho$ is a (C8)-algebra, therefore 
$(\chi\vee\rho)/\rho$ is central if and only if it is the trivial congruence
of $\al A/\rho$, that is, $\chi\le\rho$.
 
The statement in the preceding paragraph implies that 
if $\zeta$ is the center of
$\al A$, then $\bar{\zeta}=(\zeta\vee\sigma)/\sigma$ is the center of
$\al A/\sigma$ and $\zeta\le\rho$. 
Consequently, 
if $0=:\zeta_0\le\zeta=:\zeta_1\le\zeta_2\le\dots$ is the ascending
central series of $\al A$, then
$0=\bar{\zeta}_0\le\bar{\zeta}=\bar{\zeta_1}\le\bar{\zeta_2}\le\dots$ 
is the ascending central series of $\al A/\sigma$ and $\zeta_i\le\rho$
for all $i$.

\begin{cor}
\label{nilpotent-or-perfect}
If $\al A$ is a finite algebra in a congruence modular variety $\var{V}$
such that every finite subdirectly irreducible algebra in $\var{V}$
is either nilpotent or
(C8), then in the factorization 
$\al A\cong\al A/\sigma\times\al A/\rho$ in Lemma~\ref{solvable-or-perfect}
the first factor
$\m a/\sigma$ is nilpotent,
and $\sigma$ and $\rho$ are the 
final congruences in the descending and
ascending central series of $\al A$, respectively.
\end{cor}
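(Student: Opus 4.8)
The plan is to show that the factor congruences $\sigma$ and $\rho$ furnished by Lemma~\ref{solvable-or-perfect} are exactly the stable terms of the descending and ascending central series of $\al A$; the point going beyond Lemma~\ref{solvable-or-perfect} is that, under the present (stronger) hypothesis, the solvable factor $\al A/\sigma$ is in fact nilpotent. Since every nilpotent algebra is solvable, the hypothesis of the corollary implies that of Lemma~\ref{solvable-or-perfect}, so $(\sigma,\rho)$ exists with $\al A/\sigma$ solvable, $\al A/\rho$ a (C8)-algebra, and every congruence of $\al A$ a product congruence for $\al A\cong\al A/\sigma\times\al A/\rho$. Write $\gamma_0(\al A):=1$, $\gamma_{n+1}(\al A):=[\gamma_n(\al A),1]$ for the descending central series and $\gamma_\infty(\al A):=\bigcap_n\gamma_n(\al A)$ for its stable term, and $0=:\zeta_0(\al A)\le\zeta_1(\al A)=\zeta\le\zeta_2(\al A)\le\cdots$ for the ascending central series with stable term $\zeta_\infty(\al A)$; these stabilize because $\al A$ is finite.

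The central step is a separate claim: \emph{every finite solvable algebra $\al C$ in $\var{V}$ is nilpotent.} To prove it, let $\psi$ be any completely meet irreducible congruence of $\al C$; then $\al C/\psi$ is a finite subdirectly irreducible member of $\var{V}$, and it is solvable as a quotient of $\al C$. By hypothesis $\al C/\psi$ is nilpotent or a (C8)-algebra, and a nontrivial solvable (C8)-algebra is impossible, since (C8) forces $[1,1]=1$, so its derived series is constant at $1$ and never reaches $0$. Hence $\al C/\psi$ is nilpotent, so $\gamma_\infty(\al C/\psi)=0$; combined with the standard identity $\gamma_n(\al C/\psi)=\bigl(\gamma_n(\al C)\vee\psi\bigr)/\psi$ (valid in congruence modular varieties by additivity of the commutator and $[\psi,1]\le\psi$) this yields $\gamma_\infty(\al C)\le\psi$. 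As $0$ is the intersection of all completely meet irreducible congruences, $\gamma_\infty(\al C)=0$, i.e.\ $\al C$ is nilpotent. Applying this with $\al C=\al A/\sigma$ shows $\al A/\sigma$ is nilpotent, say of class $c$.

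It remains to identify $\sigma$ and $\rho$. From $\gamma_n(\al A/\sigma)=(\gamma_n(\al A)\vee\sigma)/\sigma$ and $\gamma_\infty(\al A/\sigma)=0$ we get $\gamma_\infty(\al A)\le\sigma$; conversely $\gamma_n(\al A/\gamma_\infty(\al A))=\gamma_n(\al A)/\gamma_\infty(\al A)$, so $\al A/\gamma_\infty(\al A)$ is nilpotent, hence solvable, and minimality of $\sigma$ gives $\sigma\le\gamma_\infty(\al A)$; thus $\sigma=\gamma_\infty(\al A)$. For $\rho$ I would use the facts recorded in the paragraph preceding the corollary: $\zeta_n(\al A)\le\rho$ for all $n$, and $(\zeta_n(\al A)\vee\sigma)/\sigma$ is the $n$-th term of the ascending central series of $\al A/\sigma$. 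Since $\al A/\sigma$ is nilpotent of class $c$, that series equals $1$ at step $c$, so $\zeta_c(\al A)\vee\sigma=1$. As $\rho\wedge\sigma=0$ and $\zeta_c(\al A)\le\rho$, congruence modularity gives
\[
\rho=\rho\wedge\bigl(\zeta_c(\al A)\vee\sigma\bigr)=\zeta_c(\al A)\vee(\rho\wedge\sigma)=\zeta_c(\al A),
\]
and since $\zeta_c(\al A)\le\zeta_\infty(\al A)\le\rho$, we conclude $\rho=\zeta_\infty(\al A)$.

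The main obstacle is the claim that a finite solvable algebra in $\var{V}$ is nilpotent; once that is in hand, everything else is routine bookkeeping with the modular commutator and the observations already established before the corollary. In writing out the details I would take care that the quotient formula for the terms of the central series and the triviality of solvable (C8)-algebras are invoked exactly as stated, and that the quoted description of the ascending central series of $\al A/\sigma$ is used with the correct indexing.
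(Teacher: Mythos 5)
Your proposal is correct and follows essentially the same route as the paper's proof. The paper establishes nilpotence of $\al A/\sigma$ by noting it is a finite subdirect product of solvable subdirectly irreducible algebras in $\var{V}$, each of which must be nilpotent (a nontrivial solvable (C8)-algebra being impossible) and invoking closure of nilpotence under finite subdirect products; you prove the same by tracking $\gamma_\infty$ through all completely meet irreducible congruences of a finite solvable $\al C\in\var{V}$, which is the same computation in slightly different clothing. The identification $\sigma=\gamma_\infty(\al A)$ and the modular-law derivation $\rho=\rho\wedge(\zeta_c\vee\sigma)=\zeta_c\vee(\rho\wedge\sigma)=\zeta_c=\zeta_\infty(\al A)$ match the paper's argument exactly.
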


\begin{proof}
By the construction in Lemma~\ref{solvable-or-perfect},
$\sigma$ is the least congruence such that $\al A/\sigma$ is
solvable, so
$\al A/\sigma$ is a subdirect product of solvable subdirectly
irreducible algebras in $\var{V}$. 
Since every solvable subdirectly irreducible 
algebra in $\var{V}$ is nilpotent,
we get that $\al A/\sigma$ is nilpotent, and
$\sigma$ is the least congruence such that $\al A/\sigma$ is nilpotent. 
Hence $\sigma$ is the final congruence in the descending central series
of $\al A$.

To prove our claim on $\rho$, let 
$0=\zeta_0\le\zeta=\zeta_1\le\zeta_2\le\dots$ be the ascending
central series of $\al A$. As we saw earlier, this implies that
$0=\bar{\zeta}_0\le\bar{\zeta}=\bar{\zeta_1}\le\bar{\zeta_2}\le\dots$ 
is the ascending central series of $\al A/\sigma$ and $\zeta_i\le\rho$
for all $i$.
Since $\al A/\sigma$ is nilpotent, $\bar{\zeta_c}=1$ for some $c$,
so $\zeta_c\vee\sigma=1$ and $\zeta_c\le\rho$. 
This implies that
$\rho=1\wedge\rho=(\zeta_c\vee\sigma)\wedge\rho
\stackrel{\bf mod}{=}\zeta_c\vee(\sigma\wedge\rho)=\zeta_c\vee 0=\zeta_c$.
Moreover, $\rho=\zeta_c\le\zeta_i\le\rho$ for every $i\ge c$, therefore
$\rho=\zeta_c$ is the final congruence in the ascending central series 
of $\al A$.
\end{proof}

We will only use the following special case.

\begin{cor}
\label{abelian-or-perfect}
If $\al A$ is a finite algebra in a congruence modular variety $\var{V}$
such that every finite subdirectly irreducible algebra in $\var{V}$
is either abelian or (C8), then in the factorization 
$\al A\cong\al A/\sigma\times\al A/\rho$ in Lemma~\ref{solvable-or-perfect}
the first factor
$\m a/\sigma$ is abelian,
$\sigma=[1,1]$ is the derived congruence,
and $\rho=\zeta$ is the center.
\end{cor}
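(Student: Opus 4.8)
The plan is to specialize Corollary~\ref{nilpotent-or-perfect} to the abelian case, so the first task is to observe that an abelian algebra (in a congruence modular variety) is exactly a nilpotent algebra of class $\le 1$, so the hypothesis ``every finite subdirectly irreducible in $\var{V}$ is abelian or (C8)'' is a special case of ``every finite subdirectly irreducible in $\var{V}$ is nilpotent or (C8)''. Therefore Corollary~\ref{nilpotent-or-perfect} applies verbatim and gives the factorization $\al A\cong\al A/\sigma\times\al A/\rho$ in which $\al A/\sigma$ is nilpotent, with $\sigma$ the final congruence of the descending central series and $\rho$ the final congruence of the ascending central series of $\al A$.

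Next I would pin down what ``nilpotent'' becomes here. Since $\al A/\sigma$ is a subdirect product of solvable (hence, by hypothesis, abelian) subdirectly irreducible algebras in $\var{V}$, the quotient $\al A/\sigma$ is itself abelian, i.e.\ $[1,1]\le\sigma$ in $\Con(\al A)$. On the other hand, by the construction in Lemma~\ref{solvable-or-perfect}, $\sigma$ is the least congruence with $\al A/\sigma$ solvable; any congruence $\eta$ with $\al A/\eta$ abelian has $\al A/\eta$ solvable, so $\sigma\le\eta$. Applying this with $\eta=[1,1]$ (for which $\al A/[1,1]$ is abelian) gives $\sigma\le[1,1]$, hence $\sigma=[1,1]$ is the derived congruence of $\al A$. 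For $\rho$, Corollary~\ref{nilpotent-or-perfect} already identifies it as the final congruence in the ascending central series; since $\al A/\sigma$ is abelian, that series stabilizes after one step, so the ``$\zeta_c$'' in the proof of Corollary~\ref{nilpotent-or-perfect} is already $\zeta_1=\zeta$, the center of $\al A$. (Concretely: $\bar\zeta=(\zeta\vee\sigma)/\sigma$ is the center of $\al A/\sigma$, which equals $1$ because $\al A/\sigma$ is abelian, so $\zeta\vee\sigma=1$; combined with $\zeta\le\rho$ and $\sigma\wedge\rho=0$ and modularity, $\rho=(\zeta\vee\sigma)\wedge\rho=\zeta\vee(\sigma\wedge\rho)=\zeta$.)

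Assembling these gives exactly the statement: $\al A\cong\al A/\sigma\times\al A/\rho$ with $\al A/\sigma$ abelian, $\sigma=[1,1]$, and $\rho=\zeta$. I do not expect any genuine obstacle; this corollary is a bookkeeping specialization of the two preceding results. The only point requiring a touch of care is the last identification $\rho=\zeta$, which I would present either by quoting the relevant display in the proof of Corollary~\ref{nilpotent-or-perfect} with $c=1$, or by the short modularity computation above; either way it is routine. So the proof is essentially: ``This is the special case of Corollary~\ref{nilpotent-or-perfect} in which `nilpotent' is strengthened to `abelian'; then $\sigma=[1,1]$ because $\al A/\sigma$ is abelian and $\sigma$ is minimal with $\al A/\sigma$ solvable, and $\rho=\zeta$ because the ascending central series of $\al A$ stabilizes at step~$1$.''
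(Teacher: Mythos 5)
Your proposal is correct and matches the paper's proof almost exactly: the paper also specializes Corollary~\ref{nilpotent-or-perfect} to the case where all nilpotent subdirectly irreducibles are abelian, observes that $\al A/\sigma$ is then abelian with $\sigma=[1,1]$, and takes $c=1$ in the ascending-central-series argument (your explicit modularity computation $\rho=(\zeta\vee\sigma)\wedge\rho=\zeta\vee(\sigma\wedge\rho)=\zeta$ is precisely the one embedded in that argument). No meaningful difference in approach.
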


\begin{proof}
Restricting the argument in the proof of Corollary~\ref{nilpotent-or-perfect}
to the case when all finite nilpotent subdirectly irreducible algebras 
are abelian yields that $\al A/\sigma$ is abelian and
$\sigma$ is the least congruence such that $\al A$ is abelian, so
$\sigma=[1,1]$.
Moreover, the center of $\al A/\sigma$ is $1=\bar{\zeta}=\bar{\zeta_1}$,
showing that we can use $c=1$ in the argument.
Therefore $\rho=\zeta_1=\zeta$ is the center of $\al A$.
\end{proof}

We extend the ``neutral'' terminology by calling
a subdirectly irreducible algebra $\m a$ \emph{almost neutral}
if it is nonabelian and fails (C3)
($[x,y]=x\wedge y$) in exactly one way:
$[\mu,\mu]=0$, where $\mu$ is the monolith of $\m a$.
Equivalently, a subdirectly irreducible algebra
$\m a$ with monolith $\mu$ is almost neutral if it is nonabelian
and
\begin{enumerate}
\item[(i)] $(0:\mu)=\mu$, and 
\item[(ii)] the congruence interval $\interval{\mu}{1}$ is neutral.
\end{enumerate}

\begin{thm} \label{maincor}
Let $\mathcal V$ be a variety with a parallelogram term.
Assume that every finite subdirectly 
irreducible algebra
in ${\mathcal V}$ is abelian, neutral or almost neutral.
Then every finite algebra in $\mathcal V$ is dualizable.
\end{thm}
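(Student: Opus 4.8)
The plan is to invoke Theorem~\ref{thm-main} together with the reduction remarks made at the start of this section: since the class $\mathcal{C}$ of finite algebras in $\mathcal{V}$ is closed under subalgebras and every algebra in it has a parallelogram term, it suffices to verify that each finite $\al A\in\mathcal{V}$ satisfies the split centralizer condition, and for that it is enough to show that every relevant triple $(\delta,\theta,\nu)$ of $\al A$ itself (not just of its subalgebras) is split relative to $\mathcal{Q}:=\Su\Pd(\al A)$. So fix a finite $\al A\in\mathcal{V}$ and a relevant triple $(\delta,\theta,\nu)$ of $\al A$: $\delta$ is completely meet irreducible, $\theta$ is its upper cover, $\nu=(\delta:\theta)$, and $\theta/\delta$ is abelian. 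Passing to $\al A/\delta$, we get a finite subdirectly irreducible algebra with abelian monolith $\theta/\delta$; by the hypothesis on $\mathcal{V}$ it is either abelian or almost neutral (the neutral case is excluded because neutral algebras have no nontrivial abelian congruence interval, contradicting that $\theta/\delta$ is abelian).

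The key structural step is to split into these two cases. \emph{Case 1: $\al A/\delta$ is abelian.} Then $[\,1,1\,]\le\delta$ in $\Con(\al A)$, so $\nu=(\delta:\theta)=1$. I would set $(\alpha,\beta,\kappa):=(1,\delta,\delta)$. Conditions (ii)--(iv) are immediate ($\beta=\delta\le\delta$; $\alpha\wedge\beta=\delta=\kappa$; $\alpha\vee\beta=1=\nu$), and (v) holds because $[\,1,1\,]\le\delta=\kappa$. For (i) we need $\al A/\delta\in\mathcal{Q}=\Su\Pd(\al A)$, which holds because $\al A/\delta$, being subdirectly irreducible and in $\var{V}(\al A)$, embeds in a power of $\al A$ --- indeed any finite subdirectly irreducible in the variety generated by a finite algebra is in $\Su\Pd$ of that algebra. \emph{Case 2: $\al A/\delta$ is almost neutral.} Then its monolith satisfies $(0:\mu)=\mu$, which translates back to $\nu=(\delta:\theta)=\theta$; but $\theta/\delta$ is abelian, i.e.\ $[\theta,\theta]\le\delta$, so I would take $(\alpha,\beta,\kappa):=(\theta,\delta,\delta)$. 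Again (ii)--(iv) are routine ($\beta=\delta\le\delta$; $\alpha\wedge\beta=\theta\wedge\delta=\delta=\kappa$; $\alpha\vee\beta=\theta=\nu$), (v) is $[\theta,\theta]\le\delta=\kappa$, and (i) is the same observation that $\al A/\delta\in\Su\Pd(\al A)$.

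Having split every relevant triple of every finite $\al A\in\mathcal{V}$, the reduction argument at the head of this section shows every finite $\al A\in\mathcal{V}$ satisfies the split centralizer condition, so Theorem~\ref{thm-main} gives dualizability. I expect the only real subtlety to be the bookkeeping translating "$\al A/\delta$ is abelian / almost neutral" into the commutator statements $[\,1,1\,]\le\delta$ and $(\delta:\theta)=\theta$, together with the (easy but worth stating) fact that the relevant quotient lands in $\Su\Pd(\al A)$ so that condition (i) on the splitting triple holds; everything else is a short lattice-theoretic verification. One should also note, as the paper does just before Theorem~\ref{thm-main}, that this hypothesis forces $\var{V}(\al A)$ residually small, but that is not needed for the proof --- it is subsumed by verifying the split centralizer condition directly.
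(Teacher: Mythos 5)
The gap is the repeated use, in both cases, of the claim that $\al A/\delta\in\Su\Pd(\al A)$ for a relevant triple $(\delta,\theta,\nu)$, which you justify by asserting that ``any finite subdirectly irreducible in the variety generated by a finite algebra is in $\Su\Pd$ of that algebra.'' That assertion is false. A finite subdirectly irreducible algebra in $\var{V}(\al A)$ is obtained by a quotient (not just subalgebra and product), and the quotient need not re-embed in a power of~$\al A$. Concretely, let $\al A=(\mathbb{Z}_{p^2};+,-,0,c)$ with the constant $c:=p$ adjoined to a cyclic group of order~$p^2$. This algebra has a Maltsev (hence parallelogram) term, and every finite subdirectly irreducible in $\var{V}(\al A)$ is abelian, so the hypotheses of Theorem~\ref{maincor} hold. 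The congruence $\delta$ corresponding to the subgroup $p\mathbb{Z}_{p^2}$ is completely meet irreducible with abelian prime quotient, and $\al A/\delta\cong(\mathbb{Z}_p;+,-,0,0)$. This does \emph{not} embed in any power of $\al A$: in every subalgebra of $\al A^n$ the two constants $(0,\dots,0)$ and $(p,\dots,p)$ are distinct, whereas in $\al A/\delta$ they coincide. Thus $\delta$ is not a $\mathcal{Q}$-congruence for $\mathcal{Q}=\Su\Pd(\al A)$, and your Case~1 triple $(\alpha,\beta,\kappa)=(1,\delta,\delta)$ fails condition~(i). The same objection is fatal for your Case~2 triple $(\theta,\delta,\delta)$.

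The paper sidesteps this precisely by always using $\kappa=0$, which is a $\mathcal{Q}$-congruence for free. That forces a more elaborate construction: in the abelian case it takes $(\zeta,[1,1],0)$, relying on the direct decomposition $\al A\cong\al A/[1,1]\times\al A/\zeta$ of Corollary~\ref{abelian-or-perfect}; in the almost neutral case it must produce an atom $\gamma$ with $\interval{\zeta}{\gamma}\nearrow\interval{\delta}{\theta}$ (Claim~\ref{clm-atomic}) so as to use $(\gamma\wedge[1,1],\delta,0)$. This extra machinery is exactly what is needed to keep $\kappa=0$ while getting (iii)--(v) to hold. Your bookkeeping in (ii)--(v), the case distinction abelian vs.\ almost neutral, and the translations $[1,1]\le\delta$ and $\nu=\theta$ are correct; the single unjustified step is condition~(i), and it is not a minor step --- the distinction between $\Hom$-closed and $\Su\Pd$-closed is the heart of why the split centralizer condition is subtle (cf.\ Idziak's example at the end of Section~\ref{appl}).
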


\begin{proof}
Any neutral or almost neutral subdirectly irreducible
algebra is a (C8)-algebra, hence the finite subdirectly
irreducible algebras in $\mathcal V$
are abelian or (C8). 
{}From Corollary~\ref{abelian-or-perfect}
we have that
any finite algebra $\m a\in \mathcal V$ has 
the properties that 
\begin{enumerate}
\item[(a)] 
$[1,1]$ and $\zeta$
are complementary factor congruences, 
\item[(b)] $\m a/[1,1]$ is abelian and
$\m a/\zeta$ is a (C8)-algebra, and
\item[(c)] every meet irreducible congruence
on $\m a$ lies above $[1,1]$ or $\zeta$.
\end{enumerate}
Our task is to verify
that for every
relevant triple $(\delta,\theta,\nu)$
of $\al A$, where 
$\delta$ is a meet irreducible congruence,
$\delta\prec\theta$,
$\nu = (\delta:\theta)$, and
$\theta/\delta$ is abelian,
there exists a triple $(\alpha,\beta,\kappa)$
which splits $(\delta,\theta,\nu)$ relative to
$\var{Q}:=\Su\Pd(\al A)$.
The conditions (i)--(v) that define what this means
for $(\alpha,\beta,\kappa)$ are listed
in the Introduction.

\smallskip

{\sc Case 1:} $[1,1]\leq \delta$.

We have $[1,\theta]\leq [1,1] \leq \delta$,
proving that $1 \leq (\delta:\theta)$. Hence
$1 = (\delta:\theta)=\nu$. 
If we choose $(\alpha,\beta,\kappa)=(\zeta,[1,1],0)$, 
then we have 
\begin{enumerate}
\item[(i)] 
$\kappa=0$ is a $\var{Q}$-congruence,
\item[(ii)]
$\beta=[1,1]\leq \delta$,
\item[(iii)] 
$\alpha\wedge\beta = \zeta\wedge[1,1] = 0 = \kappa$,
\item[(iv)] 
$\alpha\vee\beta = \zeta\vee[1,1] = 1 = 
\nu$, and
\item[(v)] 
$[\alpha,\alpha]=[\zeta,\zeta]=0\leq \kappa$.
\end{enumerate}
Hence 
the conditions required for $(\alpha,\beta,\kappa)$ 
are met.

\smallskip

{\sc Case 2:} $\zeta\leq \delta$.

In this case, $\m a/\delta$ is a nonabelian subdirectly
irreducible algebra. Since $(\delta,\theta,\nu)$ is relevant
it must be that the monolith $\mu=\theta/\delta$
of $\m a/\delta$ is abelian. 
$\m a/\delta$ must be almost neutral, hence
\[
\theta/\delta\leq \nu/\delta=(0:\mu) = \mu=\theta/\delta,
\]
and therefore $\nu=\theta$.

In this case it is our aim to show that there is a
congruence $\gamma$ covering $\zeta$ such that 
$\gamma\not\leq \delta$.
For such a congruence we have a perspectivity
$\interval{\zeta}{\gamma}\nearrow\interval{\delta}{\theta}$.
In this situation $(\alpha,\beta,\kappa)=(\gamma\wedge[1,1],\delta,0)$
is a splitting triple for $(\delta,\theta,\nu)$, since
\begin{enumerate}
\item[(i)] 
$\kappa=0$ is a $\var{Q}$-congruence,
\item[(ii)]
$\beta= \delta$ (we need only $\beta\leq \delta$ here),
\item[(iii)]
$\beta\wedge\alpha = \delta\wedge\gamma\wedge[1,1] = 
\zeta\wedge[1,1] = 0 = \kappa$,
\item[(iv)]
$\beta\vee\alpha = 
\delta\vee(\gamma\wedge[1,1]) = 
\delta\vee\zeta\vee(\gamma\wedge[1,1]) \stackrel{\bf mod}{=}
\delta\vee(\gamma\wedge(\zeta\vee[1,1])) = 
\delta\vee\gamma = 
\theta = \nu$, and
\item[(v)]
$[\alpha,\alpha]\leq\kappa$. 
(There are perspectivities
$\interval{\delta}{\theta}\searrow
\interval{\zeta}{\gamma}\searrow
\interval{0}{\alpha}=
\interval{\kappa}{\alpha}$
and the first is abelian, so the last is.)
\end{enumerate}

To reiterate the conclusion just drawn, it suffices
to show that $\m a$ has a congruence $\gamma$ covering
$\zeta$ such that $\gamma\not\leq \delta$.
Since $\delta$ is also above $\zeta$, we may work
modulo $\zeta$ and henceforth assume that 
$\m a$ is a (C8)-algebra.
In this situation, every subdirectly irreducible quotient
of $\m a$ is neutral or almost neutral.
So, we will be done of we prove the following claim.

\begin{clm}
\label{clm-atomic}
If $\m a$ is a finite (C8)-algebra in $\mathcal V$,
and $(\delta,\theta,\nu)$ is a relevant triple of $\m a$,
then $\al A$ has
an atomic congruence $\gamma$ 
such that $\gamma\not\leq \delta$.
\end{clm}

\noindent
{\it Proof of Claim~\ref{clm-atomic}.}
Let $\delta_i$, $i=1,\dots,n$, be the set of meet irreducible
congruences of $\m a$, and for each $i$ let $\theta_i$
be the upper cover of $\delta_i$. For each $i$ choose
$\nabla_i\in \{\delta_i,\theta_i\}$ according to
\[
\nabla_i = 
\begin{cases}
\delta_i & \textrm{if $\m a/\delta_i$ is neutral};\\
\theta_i & \textrm{if $\m a/\delta_i$ is almost neutral.}
\end{cases}
\]
Equivalently,  
$\nabla_i\in \{\delta_i,\theta_i\}$ is chosen 
as small as possible so that
the interval 
$\interval{\nabla_i}{1}$ 
is neutral.

Let $\nabla = \bigcap_{i=1}^n \nabla_i$. Since each $\m a/\nabla_i$
is neutral and the class
of neutral algebras in $\mathcal V$ 
is closed under finite subdirect products
we get that the interval 
$\interval{\nabla}{1}$ 
is neutral.
$\m a$ itself is not a neutral algebra, since it
has a relevant triple $(\delta,\theta,\nu)$ (and therefore
an abelian congruence interval 
$\interval{\delta}{\theta}$), 
so $0<\nabla$.

For each $i$, define $\Delta_i = \delta_i\wedge \nabla$.
If $\delta_i=\nabla_i\;(\geq \nabla)$ for some $i$, 
then $\Delta_i=\nabla$.
In the alternative case where 
$\theta_i=\nabla_i$, the interval
$\interval{\Delta_i}{1}$ 
contains the abelian prime quotient
$\interval{\delta_i}{\theta_i}$, and $\interval{\nabla}{1}$
contains no such prime quotient, so $\Delta_i\neq\nabla$. 
Hence 
$\nabla\not\le\delta_i$, 
so
there is a perspectivity 
$\interval{\delta_i}{\theta_i}\searrow\interval{\Delta_i}{\nabla}$.
Thus, for each $i$, $\Delta_i=\nabla$ 
if $\al A/\delta_i$ is neutral, and
$\Delta_i$
is a lower cover of $\nabla$ for which the interval 
$\interval{\Delta_i}{\nabla}$ 
is abelian
if $\al A/\delta_i$ is almost neutral.

The intersection $\bigcap_{i=1}^n \Delta_i$ equals 
$\bigcap_{i=1}^n \delta_i = 0$. Since the former is a meet
of lower covers of $\nabla$, and the interval
$\interval{0}{\nabla}$ 
is modular, this interval is complemented.
Thus, for every $i$ where $\Delta_i\prec\nabla$
there is an atom $\gamma_i$ below $\nabla$ such that 
$\interval{0}{\gamma_i}\nearrow\interval{\Delta_i}{\nabla}$. 
Choose the
value of $i$ where $\delta_i$ is the first congruence
in the relevant triple $(\delta,\theta,\nu)$. 
Since
$\theta/\delta$ is abelian, 
$\al A/\delta_i=\al A/\delta$ is almost neutral. 
Therefore
$\interval{0}{\gamma_i}\nearrow
\interval{\Delta_i}{\nabla}\nearrow
\interval{\delta_i}{\theta_i}=
\interval{\delta}{\theta}$. 
For this $i$
we have found an atom $\gamma = \gamma_i$
satisfying $\gamma\not\leq \delta$.
\hfill$\diamond$

\medskip

This completes the proof of the theorem.
\end{proof}

\begin{cor}\label{abelian-or-neutral}
Let $\mathcal V$ be a variety with a parallelogram term.
If every finite subdirectly irreducible algebra in
$\mathcal V$ is abelian or neutral, then 
every finite algebra in $\mathcal V$ is dualizable.
\end{cor}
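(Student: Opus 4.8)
The plan is to derive Corollary~\ref{abelian-or-neutral} as an immediate specialization of Theorem~\ref{maincor}. The hypothesis of the corollary is that $\mathcal V$ is a variety with a parallelogram term in which every finite subdirectly irreducible algebra is abelian or neutral. I would observe that the class ``abelian or neutral'' is contained in the class ``abelian, neutral, or almost neutral'' which appears in the hypothesis of Theorem~\ref{maincor}: a subdirectly irreducible algebra that is abelian or neutral is in particular abelian, neutral, or almost neutral. Hence the hypotheses of Theorem~\ref{maincor} are satisfied for $\mathcal V$.

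Consequently, every finite algebra in $\mathcal V$ is dualizable, which is exactly the conclusion of the corollary. There is essentially no obstacle here — the corollary is a strictly weaker statement than the theorem it follows, obtained by dropping the ``almost neutral'' option from the trichotomy on subdirectly irreducibles. The only thing worth recording in the write-up is the trivial class inclusion that licenses the application.

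Thus a complete proof reads as follows. If every finite subdirectly irreducible algebra in $\mathcal V$ is abelian or neutral, then in particular every finite subdirectly irreducible algebra in $\mathcal V$ is abelian, neutral, or almost neutral. Since $\mathcal V$ has a parallelogram term, Theorem~\ref{maincor} applies and yields that every finite algebra in $\mathcal V$ is dualizable.

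\begin{proof}
If every finite subdirectly irreducible algebra in $\mathcal V$ is abelian or neutral, then, a fortiori, every finite subdirectly irreducible algebra in $\mathcal V$ is abelian, neutral, or almost neutral. Since $\mathcal V$ has a parallelogram term, Theorem~\ref{maincor} applies and shows that every finite algebra in $\mathcal V$ is dualizable.
\end{proof}
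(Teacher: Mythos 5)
Your proof is correct and takes essentially the same approach as the paper: both observe that the corollary's hypothesis is a special case of that in Theorem~\ref{maincor} (namely, the case with no almost neutral finite subdirectly irreducibles) and then invoke that theorem directly.
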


\begin{proof}
This is Theorem~\ref{maincor} restricted 
to the situation where $\mathcal V$ has no 
finite almost
neutral subdirectly irreducible algebras.
\end{proof}

\begin{cor}[See \cite{davey-werner}]
Any finite algebra with a near unanimity term is dualizable.
\end{cor}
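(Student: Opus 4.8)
The plan is to derive this corollary directly from Corollary~\ref{abelian-or-neutral} (equivalently, from Theorem~\ref{maincor}). Let $\al A$ be a finite algebra with a near unanimity term $u$, necessarily of some arity $k\ge 3$. Two things have to be checked: that $\var{V}(\al A)$ has a parallelogram term, and that every finite subdirectly irreducible algebra in $\var{V}(\al A)$ is neutral (hence abelian or neutral). For the first point I would exhibit a parallelogram term explicitly. Set $\mathsf{P}(x,y,z,w_1,\dots,w_k):=u(w_1,\dots,w_k)$, a $(k+3)$-ary term that ignores its first three arguments. Substituting $\mathsf{P}$ into the matrix equation~\eqref{p}, each row of the left-hand side becomes $u$ applied to a tuple that is $y$ in every position except for a single entry equal to $z$ (that entry coming from the diagonal of the rightmost $k\times k$ block), so the near unanimity identities evaluate it to $y$; thus \eqref{p} holds in $\var{V}(\al A)$ and $\mathsf{P}$ is a $k$-parallelogram term. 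This is exactly the observation recorded after the definition of parallelogram terms — that a $k$-parallelogram term independent of its first three variables is a $k$-ary near unanimity term — read in reverse.

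For the second point I would invoke the classical fact that a variety with a near unanimity term (of arity $\ge 3$) is congruence distributive. In a congruence distributive variety the commutator is trivial, $[\alpha,\beta]=\alpha\wedge\beta$, so (C3) holds in every algebra; consequently no congruence interval $\interval{\beta}{\alpha}$ with $\beta<\alpha$ is abelian, i.e.\ every algebra in $\var{V}(\al A)$ is neutral. In particular every finite subdirectly irreducible algebra in $\var{V}(\al A)$ is neutral, and the hypotheses of Corollary~\ref{abelian-or-neutral} are satisfied.

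Combining the two points, Corollary~\ref{abelian-or-neutral} applies to $\var{V}(\al A)$ and yields that every finite algebra in $\var{V}(\al A)$ is dualizable; taking the algebra to be $\al A$ itself gives the conclusion. I do not expect a real obstacle here: the only step needing any care is confirming that the explicit term $\mathsf{P}$ above satisfies \eqref{p} in precisely the displayed form, and that is immediate once one reads off that each row of the rightmost block of \eqref{p} is a near‑unanimity input. (Alternatively, one could bypass Corollary~\ref{abelian-or-neutral} and use Theorem~\ref{thm-main} directly: congruence distributivity forces $\var{V}(\al A)$ to have no abelian prime quotients, hence no subalgebra of $\al A$ admits any relevant triple, so the split centralizer condition holds vacuously.)
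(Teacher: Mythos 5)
Your proof is correct and takes essentially the same route as the paper: the paper's proof of this corollary likewise observes that a near unanimity term is a parallelogram term and that any algebra with such a term is neutral, then invokes Theorem~\ref{maincor}. You simply make explicit the two facts the paper leaves implicit (that $\mathsf{P}(x,y,z,\bar w):=u(\bar w)$ satisfies \eqref{p}, and that congruence distributivity forces (C3), hence neutrality), which is sound.
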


\begin{proof}
A near unanimity term is a parallelogram term,
and any algebra having such a term is neutral.
Hence this corollary is a further 
restriction of Theorem~\ref{maincor}.
(In fact, it is exactly the
 restriction of 
Theorem~\ref{maincor} 
to the situation where $\mathcal V$ has no 
finite abelian or almost
neutral subdirectly irreducible algebras.)
\end{proof}

\begin{cor}
\label{cor-dirrep}
Any finite algebra in a directly representable variety
is dualizable.
\end{cor}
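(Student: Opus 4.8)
The plan is to deduce the statement from Theorem~\ref{maincor}, or even from its special case Corollary~\ref{abelian-or-neutral}, by invoking the classical structure theory of directly representable varieties. Recall that a directly representable variety $\var{V}$ has, up to isomorphism, only finitely many finite directly indecomposable members, that every finite member of $\var{V}$ is a finite direct product of such, and hence that $\var{V}$ is generated by a finite algebra. So, for a finite $\m a\in\var{V}$, it is enough to show that $\var{V}$ itself satisfies the hypotheses of Theorem~\ref{maincor}: that it has a parallelogram term and that each of its finite subdirectly irreducible algebras is abelian, neutral, or almost neutral --- for then Theorem~\ref{maincor} yields that every finite algebra in $\var{V}$, in particular $\m a$, is dualizable.

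Both hypotheses hold by the known structure theory of directly representable varieties (R.~McKenzie and others). First, a directly representable variety is congruence modular and has a cube term --- equivalently, by the results quoted before Theorem~\ref{thm-parterm-cm}, a parallelogram term. Second, the finite subdirectly irreducible algebras of such a variety are in fact abelian or neutral: the variety splits (as a varietal product) into a module-type factor, whose algebras are polynomially equivalent to modules and hence abelian, and a factor all of whose finite subdirectly irreducibles are simple nonabelian, hence neutral. In particular no almost neutral subdirectly irreducibles arise, so $\var{V}$ meets the (stronger) hypothesis of Corollary~\ref{abelian-or-neutral}, and that corollary already gives the conclusion. If one wishes to rely only on the weaker form of the structure theory, namely that the finite subdirectly irreducibles of a directly representable variety are abelian, neutral, or almost neutral, then Theorem~\ref{maincor} is applied in its place, with the same outcome.

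The essential content of the proof is thus not a new argument but the correct citation of the structure theory --- that a directly representable variety is congruence modular, has a cube term, and has only abelian or neutral finite subdirectly irreducibles --- and this is the step that must be done with care. Once those facts are in hand, Corollary~\ref{cor-dirrep} is an immediate application of the main theorem of the paper, the only minor point being that Theorem~\ref{maincor} (or Corollary~\ref{abelian-or-neutral}) is applied to the ambient directly representable variety $\var{V}$ itself, which is legitimate precisely because $\var{V}$ is generated by a finite algebra.
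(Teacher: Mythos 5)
Your proposal is correct and follows essentially the same route as the paper: both reduce to Corollary~\ref{abelian-or-neutral} by citing McKenzie's structure theory for directly representable varieties, namely that a finite algebra in such a variety has a Maltsev term (hence a parallelogram term) and that the finite nonabelian subdirectly irreducibles are simple and therefore neutral. The extra discussion of the varietal-product decomposition is a slightly more elaborate way of justifying the same two facts that the paper cites directly from \cite[Theorem~5.11]{mckenzie_narrowness}.
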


\begin{proof}
This is a corollary to Corollary~\ref{abelian-or-neutral}.
To see this, recall that (i)~a finite algebra
in a directly representable variety has a Maltsev term
(see \cite[Theorem~5.11]{mckenzie_narrowness}),
(ii)~a Maltsev term is a parallelogram term, and
(iii)~the finite nonabelian subdirectly irreducible algebras
in a directly representable variety are simple, hence neutral
(see \cite[Theorem~5.11]{mckenzie_narrowness}).
\end{proof}

\begin{cor}[Cf.~\cite{davey-quackenbush}]
Any finite algebra in a variety generated by a
paraprimal algebra is dualizable.
\end{cor}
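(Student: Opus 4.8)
The plan is to obtain this as yet another specialization of Corollary~\ref{abelian-or-neutral}. Let $\al P$ be a paraprimal algebra and put $\var{V}:=\var{V}(\al P)$. The two hypotheses of Corollary~\ref{abelian-or-neutral} that need to be checked are that $\var{V}$ has a parallelogram term and that every finite subdirectly irreducible algebra in $\var{V}$ is abelian or neutral.

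For the first, recall that a paraprimal algebra by definition generates a congruence permutable variety, so $\var{V}$ has a Maltsev term, and a Maltsev term is a parallelogram term (as already used in the proof of Corollary~\ref{cor-dirrep}). For the second, I would invoke the classical structure theory of paraprimal algebras due to Quackenbush (see \cite{davey-quackenbush}): since congruences in $\var{V}$ permute, every finite subdirect product of subdirectly irreducible algebras is a direct product of subdirectly irreducibles with diagonal identifications, so the finite subdirectly irreducible algebras in $\var{V}$ are all isomorphic to subalgebras of $\al P$; and paraprimality says each such subalgebra is simple. It then only remains to note that a simple algebra $\al S$ is abelian or neutral: since $\Con(\al S)=\{0,1\}$ we have $[1,1]\in\{0,1\}$, and if $[1,1]=0$ then $\al S$ is abelian, whereas if $[1,1]=1$ then the only nontrivial congruence interval $\interval{0}{1}$ of $\al S$ is non-abelian, so $\al S$ is neutral.

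With both hypotheses verified, Corollary~\ref{abelian-or-neutral} yields that every finite algebra of $\var{V}$ is dualizable; in particular $\al P$ itself is dualizable, which recovers and sharpens the theorem of \cite{davey-quackenbush}. (Alternatively, the same two observations show that $\var{V}$ has only finitely many finite directly indecomposable algebras, so $\var{V}$ is directly representable and the result also follows from Corollary~\ref{cor-dirrep}.) The proof carries essentially no computation; the only substantive input is the structural description of paraprimal algebras, which I would cite rather than reprove, and the mild point requiring attention is the verification -- done above -- that ``simple'' implies ``abelian or neutral'' in the sense used in this paper, namely having no nontrivial abelian congruence intervals.
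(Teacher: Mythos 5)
Your proposal reaches the right conclusion and (as your final parenthetical observes) your alternative remark is exactly the paper's proof: $\var{V}(\al P)$ is directly representable by \cite[Theorem~1.6]{clark-krauss-paraprimal}, so Corollary~\ref{cor-dirrep} applies. Your primary argument simply inlines that chain, going straight to Corollary~\ref{abelian-or-neutral} by verifying the two hypotheses directly. The Maltsev/parallelogram term step is fine. Where you should be more careful is the claim that ``since congruences in $\var{V}$ permute, every finite subdirect product of subdirectly irreducibles is a direct product with diagonal identifications, so finite subdirectly irreducibles are isomorphic to subalgebras of $\al P$.'' Congruence permutability alone does not force the finite subdirectly irreducibles of $\var{V}(\al A)$ to embed in $\al A$ (that is a congruence-\emph{distributive} phenomenon via J\'onsson's lemma), and the ``diagonal identifications'' description of subdirect products does not by itself constrain which subdirectly irreducibles occur. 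What actually makes your claim true here is the Clark--Krauss structure theory of paraprimal varieties (direct representability), after which \cite[Theorem~5.11]{mckenzie_narrowness} gives that the finite nonabelian subdirectly irreducibles are simple, hence neutral. So lean on the cited structure theorem explicitly rather than on the informal permutability heuristic. Also, a minor attribution point: the structure theorem you need is Clark--Krauss \cite{clark-krauss-paraprimal}, not \cite{davey-quackenbush}, which is the dualizability paper.
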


\begin{proof}
This follows from Corollary~\ref{cor-dirrep} and
the fact that a variety generated by a paraprimal algebra
is directly representable
(see \cite[Theorem~1.6]{clark-krauss-paraprimal}).
\end{proof}

\begin{cor}
\label{cor-affine}
Any finite affine algebra is dualizable.
In particular, any finite module is dualizable.
\end{cor}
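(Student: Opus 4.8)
The plan is to obtain this as an immediate consequence of Corollary~\ref{abelian-or-neutral}. A finite algebra $\al A$ is affine exactly when it is abelian and has a Maltsev term operation; in particular $\al A$ has a Maltsev term, which is a parallelogram term (as already noted in the proof of Corollary~\ref{cor-dirrep}). Hence $\var{V}(\al A)$ is a variety with a parallelogram term, and by Theorem~\ref{thm-parterm-cm} it is congruence modular.

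It then remains to verify the other hypothesis of Corollary~\ref{abelian-or-neutral}: that every finite subdirectly irreducible algebra in $\var{V}(\al A)$ is abelian (the ``neutral'' alternative is never invoked). In fact every algebra in $\var{V}(\al A)$ is abelian. Indeed, in a congruence modular variety the class of abelian algebras is closed under subalgebras, products, and quotients: closure under subalgebras and products follows from monotonicity of the commutator, and if $\al C$ is abelian and $\alpha\in\Con(\al C)$ then $[1,1]=0\le\alpha$ forces $\al C/\alpha$ to be abelian. Since $\al A$ itself is abelian, every member of $\var{V}(\al A)=\var{V}(\al A)$, a fortiori every finite subdirectly irreducible one, is abelian. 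Corollary~\ref{abelian-or-neutral} now yields that every finite algebra in $\var{V}(\al A)$, in particular $\al A$, is dualizable.

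For the final assertion, every module $\al M$ over a ring $R$ is an affine algebra: it is abelian and $x-y+z$ is a Maltsev term. (Even more directly, $\var{V}(\al M)$ is contained in the variety of all $R$-modules, every algebra of which is abelian, so the previous paragraph applies verbatim.) Thus finite modules form a special case of finite affine algebras. I do not expect any genuine obstacle in this argument; the only step of any substance is the observation that abelianness propagates from the generator $\al A$ to the whole variety $\var{V}(\al A)$, which is the standard phenomenon underlying Herrmann's theorem on affine modular varieties, and everything else is a direct appeal to Corollary~\ref{abelian-or-neutral}.
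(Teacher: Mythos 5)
Your proof is correct and follows essentially the same route as the paper: both reduce to Theorem~\ref{maincor} (you via its special case Corollary~\ref{abelian-or-neutral}) by observing that an affine algebra has a Maltsev term (hence a parallelogram term) and generates a variety all of whose finite subdirectly irreducible members are abelian. You merely make explicit the standard fact that abelianness is inherited by the whole variety $\var{V}(\al A)$ in the congruence modular setting, which the paper leaves implicit.
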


\begin{proof}
This is another restriction of Theorem~\ref{maincor}.
This time we are restricting to the case where all
finite subdirectly irreducible algebras in $\mathcal V$ are abelian.
\end{proof}

Modules and affine algebras for which dualizability was known before
include finite abelian groups (using the restriction of Pontryagin duality, 
see \cite[Chapter~4]{clark-davey}),
finite affine spaces (see \cite{pszczola}), and
finite algebras in a variety generated by a finite simple affine algebra
(see \cite{davey-quackenbush}). All these special cases of 
Corollary~\ref{cor-affine}
are covered also by Corollary~\ref{cor-dirrep}.
However, Corollary~\ref{cor-affine} is not a consequence of
Corollary~\ref{cor-dirrep}, because not every finite module lies in
a directly representable variety.
(A finite faithful $R$-module generates a directly representable variety
if and only if $R$ is of finite representation type.)

\begin{cor}
\label{cor-K-alg}
Let $\mathbb K$ be a commutative unital ring.
Let $\mathcal V$ be a residually small variety
of $\mathbb K$-algebras
(commutative or not, unital or not).
Any finite algebra in $\mathcal V$ is dualizable.
\end{cor}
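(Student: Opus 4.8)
The plan is to deduce Corollary~\ref{cor-K-alg} from Theorem~\ref{maincor}. Every algebra in $\mathcal V$ carries the underlying $\mathbb K$-module, so $\mathcal V$ has the Maltsev term $x-y+z$; a variety with a Maltsev term has a cube term, hence (by the results cited in connection with Theorem~\ref{thm-parterm-cm}) a parallelogram term, and is in particular congruence modular. By Theorem~\ref{maincor} it therefore suffices to prove that every finite subdirectly irreducible algebra in $\mathcal V$ is abelian, neutral, or almost neutral.

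I would begin with the ring-theoretic dictionary for the relevant notions: for a $\mathbb K$-algebra the congruences are the two-sided ideals, and the modular commutator of ideals $I,J$ is $IJ+JI$; hence an ideal is abelian iff it squares to zero, and a finite $\mathbb K$-algebra is neutral iff it has no nonzero nilpotent ideal, i.e.\ iff it is semisimple. Since $\mathcal V$ is residually small, Theorem~\ref{thm-rs} (and the discussion after it) shows that (C1) holds in the congruence lattice of every finite algebra of $\mathcal V$. I would then observe: if $I$ is a nonzero nilpotent ideal of a finite $\m a\in\mathcal V$ and $k\ge 1$ is largest with $I^k\ne 0$, then taking $x=I^k$, $y=I$ in (C1) gives $0=I^{k+1}=[I^k,I]=I^k\wedge[I,I]=I^k\wedge I^2$, which is impossible when $k\ge 2$; so $I^2=0$. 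Thus in every finite algebra of $\mathcal V$ each nilpotent ideal squares to zero; in particular $J(\m a)^2=0$ for the Jacobson radical of a finite $\m a\in\mathcal V$, and a finite nilpotent algebra in $\mathcal V$ is abelian.

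Now let $\m a\in\mathcal V$ be finite and subdirectly irreducible with monolith $\mu$, and put $J:=J(\m a)$. If $\m a$ is nilpotent it is abelian. If $J=0$ then $\m a$ is simple, hence neutral. Otherwise $\m a/J$ is a nonzero finite semisimple ring, and a minimal ideal of $\m a$ not contained in $J$ would be non-nilpotent, hence idempotent, hence a simple ring splitting off as a direct factor; by subdirect irreducibility this factor would be all of $\m a$, forcing $J=0$, contrary to assumption. So $\mu\subseteq J$, whence $\mu^2=0$ and $J^2=0$. Since $J^2=0$, $J$ is a $(\m a/J)$-bimodule, and the enveloping ring of the finite semisimple ring $\m a/J$ is again semisimple, so $J$ is a semisimple bimodule; subdirect irreducibility of $\m a$ means $J$ has a unique minimal sub-bimodule, which forces $J$ itself to be simple, i.e.\ $J=\mu$. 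Hence $\m a/\mu=\m a/J$ is semisimple, so neutral. Finally, $(0,\mu,(0:\mu))$ is a relevant triple of $\m a$, so by (C1) (cf.\ the argument following Theorem~\ref{thm-rs}) we get $[(0:\mu),(0:\mu)]=(0:\mu)^2=0$; were $(0:\mu)$ strictly larger than $\mu=J$, the nonzero ideal $(0:\mu)/J$ of the semisimple ring $\m a/J$ would contain a nonzero idempotent, which would lift, modulo the nilpotent ideal $J$, to a nonzero idempotent of $(0:\mu)$ --- impossible since $(0:\mu)^2=0$. Therefore $(0:\mu)=\mu$ and $\m a$ is almost neutral. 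In every case $\m a$ is abelian, neutral, or almost neutral, and Theorem~\ref{maincor} applies, so every finite algebra of $\mathcal V$, in particular the given one, is dualizable.

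The routine ingredients above --- the Wedderburn structure of finite associative rings, the lifting of idempotents modulo a nilpotent ideal, and the minor care needed in the non-unital case (where ``$\m a/J$ is semisimple'' must be read in the non-unital sense and splitting off an idempotent minimal ideal wants a short direct argument) --- I would supply in full, but expect no difficulty there. The real content lies in the two applications of (C1): that nilpotent ideals of finite algebras in $\mathcal V$ square to zero, and that $(0:\mu)=\mu$. That is where residual smallness, via the commutator identity it forces on $\mathcal V$ through Theorem~\ref{thm-rs}, pins the finite subdirectly irreducibles down to the three tame shapes required by Theorem~\ref{maincor}, and it is the only place the hypothesis of the corollary is used essentially.
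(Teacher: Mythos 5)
The high-level plan coincides with the paper's: reduce to Theorem~\ref{maincor} by showing every finite subdirectly irreducible algebra in $\mathcal V$ is abelian, neutral, or almost neutral. What differs is how that classification is obtained. The paper treats it as known, citing McKenzie's structure theory for $\mathbb K$-algebras in residually small varieties (\cite{mckenzie_kalg}, Theorems 3.1 and 3.2 and Section 7), which yields the sharp statement that a nonabelian SI $\m a$ is either simple or has abelian monolith $\mu = J = \mathrm{rad}(\m a)$ with $\m a/J$ a field or a product of two fields. You instead re-derive the needed dichotomy from scratch via (C1) and standard ring theory; your applications of (C1) (nilpotent ideals square to zero; $(0\! :\!\mu)^2=0$ for a relevant triple) and the idempotent-lifting argument are correct. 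Your route is genuinely different and more self-contained, which is a virtue.

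However, you underestimate the delicate point, which is precisely the step that makes McKenzie's theorem nontrivial: establishing $J = \mu$. Your argument is ``$J$ is a semisimple $\m a/J$-bimodule (since the enveloping ring of a finite semisimple ring is semisimple), and subdirect irreducibility gives a unique minimal sub-bimodule, hence $J$ is simple.'' For this to prove $J=\mu$, the relevant sub-bimodules must be the $\mathbb K$-ideals of $\m a$ contained in $J$, and these must form a complemented lattice. When $\m a$ is unital, every ring-ideal contained in $J$ is automatically a $\mathbb K$-submodule (because $\mathbb K$ acts through $Z(\m a)$, and left and right multiplication on $J$ factor through $S=\m a/J$ since $J^2=0$), so $\mathbb K$-ideals in $J$ coincide with $S$-sub-bimodules and your separability argument over $\mathbb Z$ does apply. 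But the corollary explicitly includes non-unital $\mathbb K$-algebras, and there the $\mathbb K$-action on $J$ need not factor through $S$ (in particular, on the part of $J$ annihilated by $\m a$ on both sides the $\mathbb K$-action is essentially arbitrary), so the lattice of $\mathbb K$-sub-bimodules is governed not by $S\otimes_{\mathbb Z}S^{\mathrm{op}}$ but by a ring involving the image of $\mathbb K$, which need not be semisimple. You describe this as ``minor care needed in the non-unital case''; it is in fact the substantive content being imported from McKenzie's paper, where residual smallness (not just (C1) in the single algebra $\m a$) is used to pin down the SI structure. So the proposal is sound in outline and correct in the unital case, but the non-unital part of the bimodule step is a genuine gap if one insists on avoiding McKenzie.
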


\begin{proof}
It follows from Theorems~3.1 and 3.2 in \cite{mckenzie_kalg} that 
for every  nonabelian subdirectly irreducible $\mathbb{K}$-algebra 
$S$ in 
a residually small variety the ring reduct of $S$ is 
also subdirectly irreducible and lies in a residually small variety.
The 
possible structure of a subdirectly irreducible ring in a
residually small variety is described in Section~7 of
\cite{mckenzie_kalg}. 
These results imply that
every nonabelian
subdirectly irreducible algebra $S\in \mathcal V$ 
is either simple or else (i) has abelian monolith 
equal to the radical, $J = \textrm{rad}(S)$, and 
(ii) has $S/J$ isomorphic to a field or a product of two fields.
This is enough to guarantee that 
every subdirectly irreducible
algebra in $\mathcal V$ is abelian, neutral or 
almost neutral.
So, our statement follows from Theorem~\ref{maincor}.
\end{proof}

\begin{cor}
\label{cor-ring}
Any finite ring 
(commutative or not, unital or not)
that generates a residually small variety 
is dualizable.
\end{cor}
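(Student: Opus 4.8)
The plan is to obtain Corollary~\ref{cor-ring} as an instance of Corollary~\ref{cor-K-alg}, by recognizing a finite ring as, up to term equivalence, an algebra over a finite commutative unital ring.

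Let $R$ be a finite ring (possibly without unit, possibly noncommutative) that generates a residually small variety, and let $e$ be the exponent of the additive group $(R,+)$. First I would view $R$ as an algebra over $\mathbb K:=\mathbb Z/e\mathbb Z$: the abelian group $(R,+)$ is a $\mathbb K$-module, and the ring multiplication, being biadditive, is automatically $\mathbb K$-bilinear. Write $R^{\mathbb K}$ for the resulting $\mathbb K$-algebra. The point is that $R$ and $R^{\mathbb K}$ are \emph{term equivalent}: each scalar operation $x\mapsto\lambda x$ $(\lambda\in\mathbb K)$ of $R^{\mathbb K}$ is an iterated sum, hence a term operation of $R$, while conversely $+,-,0$ and the multiplication of $R^{\mathbb K}$ are term operations of $R$. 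Therefore $R$ and $R^{\mathbb K}$ have the same clone, the same subuniverses of every finite power, and the same congruence lattices; in particular they have the same subdirectly irreducible sections, so the variety $\mathcal V$ of $\mathbb K$-algebras generated by $R^{\mathbb K}$ is residually small because the variety of rings generated by $R$ is.

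It then remains to apply Corollary~\ref{cor-K-alg} to $\mathbb K=\mathbb Z/e\mathbb Z$ and the residually small variety $\mathcal V$: every finite algebra in $\mathcal V$ is dualizable, so in particular $R^{\mathbb K}$ is dualizable. Finally I would invoke the standard fact of natural duality theory that dualizability depends only on an algebra's clone --- term equivalent algebras are simultaneously dualizable, since they have exactly the same compatible relations (cf.~\cite{clark-davey}) --- to conclude that $R$ itself is dualizable. The substantive content (the structure of subdirectly irreducible rings in residually small varieties, the Maltsev/parallelogram term, and the verification of the split centralizer condition) is already encapsulated in Corollary~\ref{cor-K-alg} and, ultimately, in Theorem~\ref{maincor}; so the only thing to be careful about here is the term-equivalence bookkeeping, namely that neither residual smallness nor dualizability is affected by adjoining the term-definable scalar operations. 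This is immediate because the relevant lattices of congruences and of subalgebras of powers, and the families of compatible relations, are literally unchanged.
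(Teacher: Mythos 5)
Your proposal is correct and takes essentially the same route as the paper's one-line proof, which simply invokes Corollary~\ref{cor-K-alg} with $\mathbb K = \mathbb Z$ (your choice $\mathbb K=\mathbb Z/e\mathbb Z$ works equally well, and $\mathbb K$ is not required to be finite). The only difference is that you spell out explicitly the term-equivalence bookkeeping --- that adjoining the term-definable scalar operations changes neither residual smallness nor the family of compatible relations, hence not dualizability --- which the paper leaves implicit.
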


\begin{proof}
This is Corollary~\ref{cor-K-alg} restricted to the case when
$\mathbb K = \mathbb Z$.
\end{proof}

The special case of Corollary~\ref{cor-ring} when the 
ring is assumed to be 
commutative and unital was proved in \cite{commutative_rings}.
For a commutative ring $R$ with unit, the condition that $R$ generates
a residually small variety is equivalent to the condition that
its radical, $J=\text{\rm rad}(R)$, is abelian (i.e., $J^2=0$).%

\medskip

For a finite group $G$, the condition that 
$G$ generates a residually small variety is equivalent 
to the condition that the Sylow subgroups of $G$ are abelian.
Now we explain how to derive from Theorem~\ref{thm-main} the 
result that any finite group with abelian Sylow subgroups is dualizable.
(This does not follow from Theorem~\ref{maincor}.)

The crucial result is the following.

\begin{thm}[From {\cite[Chapter~2]{nickodemus}}] 
\label{matt-nick}
If $G$ is a finite group with abelian Sylow subgroups
and $(\delta,\theta,\nu)$ is a relevant triple of $G$,
then there is an endomorphism $\varepsilon\colon G\to G$
such that 
\[
[\nu,\nu]\leq \ker(\varepsilon)\leq \delta. 
\]
\end{thm}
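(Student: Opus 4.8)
The plan is to reduce the statement to a purely group-theoretic fact about finite groups with abelian Sylow subgroups, and then exploit the structure of such groups. Recall that a finite group $G$ has abelian Sylow subgroups if and only if it generates a residually small variety, which by Theorem~\ref{thm-rs} is equivalent to (C1) holding in the congruence lattice of every subgroup of $G$; in particular (C1) holds in $\Con(G)$, so by the discussion following Theorem~\ref{thm-rs} every relevant triple $(\delta,\theta,\nu)$ of $G$ already satisfies $[\nu,\nu]\le\delta$. Translating to normal subgroups, write $M\trianglelefteq N\trianglelefteq G$ for the normal subgroups corresponding to $\delta\le\nu$ (so $\delta$ is the kernel of the quotient by $M$, etc.); the task is to produce an endomorphism $\varepsilon\colon G\to G$ with $[N,N]\le\ker(\varepsilon)\le M$. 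Since $[N,N]\le M$ already holds, what we need is an endomorphism whose kernel is trapped between $[N,N]$ and $M$.

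First I would use the completely-meet-irreducibility of $\delta$: the quotient $G/M$ is subdirectly irreducible with monolith $\theta/\delta\cong N/M$, and $\theta/\delta$ is abelian, so $G/M$ is a finite subdirectly irreducible group whose monolith is an abelian chief factor. The key structural input is the classification of finite groups with abelian Sylow subgroups — by results going back to Burnside/Huppert, such a group $G$ is metabelian-by-(something controlled), and more usefully, each chief factor of $G$ is centralized by a normal complement-like subgroup. The concrete plan is: because Sylow subgroups are abelian, for the prime $p$ dividing $|N/M|$ the Sylow $p$-subgroup $P$ of $G$ is abelian, and transfer/focal-subgroup arguments (Burnside's normal $p$-complement theorem applied in suitable sections) give a normal $p$-complement in the relevant section. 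Using this I would construct a retraction: find a normal subgroup $K\trianglelefteq G$ with $G=K\cdot(\text{a }p\text{-part})$ splitting off the piece that sees $N/M$, and let $\varepsilon$ be the composite $G\twoheadrightarrow G/K \hookrightarrow G$ of the quotient map with a section embedding $G/K$ back into $G$ as a subgroup. The kernel of such a $\varepsilon$ is exactly $K$, and the construction is arranged so that $[N,N]\le K\le M$.

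More precisely, since $[N,N]\le M$ and $N/[N,N]$ is abelian, I would work inside the abelian group $A:=N/[N,N]$ with the conjugation action of $G$. The submodule $M/[N,N]\le A$ is a $G$-submodule, and $N/M$ is the corresponding quotient module, which is the abelian monolith of $G/M$. The goal becomes: find a $G$-submodule complement, or at least a $G$-endomorphism of $A$ with image landing in $M/[N,N]$ and restricting to the identity on a complement — equivalently a $G$-module retraction of $A$ onto $M/[N,N]$. Because $G$ has abelian Sylow subgroups, $A$ is a module over $\mathbb{Z}[G]$ whose $p$-primary components are semisimple in the relevant range (Maschke-type splitting holds when the Sylow $p$-subgroup acts with order prime to the module, and the abelian-Sylow hypothesis is precisely what forces the obstructing self-extensions to vanish for the monolith). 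Extending the resulting $G$-module retraction $A\to M/[N,N]$ by the identity outside $N$, and lifting through $[N,N]$, yields a genuine group endomorphism $\varepsilon$ of $G$ fixing $G/N$ and collapsing $N$ onto $M$ modulo $[N,N]$, so that $[N,N]\le\ker(\varepsilon)\le M$.

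The main obstacle I anticipate is the lifting/gluing step: turning a $G$-equivariant retraction on the abelian section $N/[N,N]$ into an honest endomorphism of all of $G$, rather than just of $N$ or of a section. This requires either a cohomological vanishing argument (showing the relevant $H^1$ or $H^2$ obstruction dies because the Sylow subgroups are abelian) or an explicit use of the structure theory of groups with abelian Sylow subgroups to produce the splitting subgroup directly; the abelian-Sylow hypothesis must be used essentially here, since the statement is false in general. Everything else — the reduction to a module-theoretic statement, the translation between congruences and normal subgroups, and the already-available inequality $[\nu,\nu]\le\delta$ from (C1) — is routine.
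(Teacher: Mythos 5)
First, note that the paper does not prove Theorem~\ref{matt-nick} itself: it is imported from Nickodemus's thesis \cite{nickodemus} and invoked as a black box in the proof of Corollary~\ref{groups}, so there is no in-paper proof to compare your proposal against. Evaluating your sketch on its own terms, the opening reduction is fine --- $[\nu,\nu]\le\delta$ does follow from (C1) by the paper's discussion after Theorem~\ref{thm-rs}, and translating to normal subgroups is routine --- but after that there is a genuine gap, which you partially flag yourself. The claim that you want ``a $G$-module retraction of $A$ onto $M/[N,N]$'' is backwards: a retraction onto $M/[N,N]$ has kernel a \emph{complement} to $M/[N,N]$ in $A$, whereas you need the kernel of the induced map to be \emph{contained in} $M/[N,N]$. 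Moreover, the restriction of a group endomorphism $\varepsilon$ of $G$ (with $\ker\varepsilon\le N$) to $A=N/[N,N]$ is in general only $\varepsilon$-twisted equivariant, not a $\mathbb{Z}[G]$-module map, so the passage from ``module retraction'' to ``group endomorphism'' does not reduce cleanly to module theory as you hope. (There is also a slip where you identify the monolith of $G/M$ with $N/M$; it is $\theta/\delta$, which is only $N/M$ when $\theta=\nu$.)

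The deeper problem is the step you yourself call ``the main obstacle'': producing an actual endomorphism of $G$, i.e.\ a normal subgroup $K$ with $[N,N]\le K\le M$ that is the kernel of some self-map of $G$ (e.g.\ that is complemented in $G$, so one can take $G\twoheadrightarrow G/K\cong H\hookrightarrow G$). You wave at Burnside normal $p$-complements, transfer, Maschke-type splittings, and cohomological vanishing, but none of these is carried far enough to produce the needed $K$; and the abelian-Sylow hypothesis alone does not give complements for arbitrary abelian normal sections ($\mathbb{Z}/2\subset\mathbb{Z}/4$ is uncomplemented inside an abelian Sylow $2$-subgroup), so some additional structural input about A-groups (in the style of Taunt's $G'\cap Z(G)=1$ or Gasch\"utz-type complementation applied to the specific section $N/[N,N]$) is essential and is exactly what is missing. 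As written, the proposal identifies the right target but leaves the construction of $\varepsilon$ --- the entire content of the theorem --- unresolved.
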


\begin{cor}[From {\cite{nickodemus}}]  
\label{groups}
Any finite group with abelian Sylow subgroups is dualizable.
\end{cor}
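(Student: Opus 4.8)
The plan is to deduce this from Theorem~\ref{thm-main} by the device used repeatedly in this section. A group has the Maltsev term $xy^{-1}z$, which is a parallelogram term, and the class $\mathcal{C}$ of all finite groups with abelian Sylow subgroups is closed under subgroups (a Sylow $p$-subgroup of a subgroup embeds into a Sylow $p$-subgroup of the ambient group, so is abelian). Hence, by the remarks at the beginning of this section, it is enough to check that every relevant triple $(\delta,\theta,\nu)$ of every $G\in\mathcal{C}$ is split, relative to $\var{Q}:=\Su\Pd(G)$, by some triple $(\alpha,\beta,\kappa)$.

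So I would fix $G\in\mathcal{C}$ and a relevant triple $(\delta,\theta,\nu)$, invoke Theorem~\ref{matt-nick} to obtain an endomorphism $\varepsilon\colon G\to G$ with $[\nu,\nu]\le\ker(\varepsilon)\le\delta$, and then set
\[
\kappa:=\ker(\varepsilon),\qquad \beta:=\kappa,\qquad \alpha:=\nu.
\]
Conditions (i)--(v) of the splitting definition should now fall out with no further work. Indeed $G/\kappa\cong\varepsilon(G)\le G$, so $G/\kappa\in\var{Q}$ and $\kappa$ is a $\var{Q}$-congruence, which is~(i); $\beta=\kappa\le\delta$ is~(ii); relevance gives $[\theta,\theta]\le\delta$, hence $\theta\le(\delta:\theta)=\nu$, so $\kappa\le\delta\le\theta\le\nu$ and therefore $\alpha\wedge\beta=\nu\wedge\kappa=\kappa$ and $\alpha\vee\beta=\nu\vee\kappa=\nu$, which are~(iii) and~(iv); and $[\alpha,\alpha]=[\nu,\nu]\le\kappa$ is the left-hand inequality of Theorem~\ref{matt-nick}, which is~(v). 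Thus $(\delta,\theta,\nu)$ is split, every member of $\mathcal{C}$ satisfies the split centralizer condition, and Theorem~\ref{thm-main} gives that every member of $\mathcal{C}$ is dualizable.

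The main obstacle is therefore entirely group-theoretic and is exactly what Theorem~\ref{matt-nick} (from~\cite{nickodemus}) overcomes: one must produce a $\var{Q}$-congruence --- here realized as the kernel of an endomorphism, so that $G/\kappa$ embeds into $G$ --- squeezed between $[\nu,\nu]$ and $\delta$. Once such a $\kappa$ is available the splitting triple is essentially degenerate (one takes $\beta=\kappa$, collapsing $\beta$ onto $\kappa$ and $\alpha$ onto $\nu$ in the picture of Figure~1), and the remaining verification is a formality; in particular one never needs the subdirectly irreducible quotient $G/\delta$ itself to embed into $G$, only $G/\ker(\varepsilon)$. I expect the write-up to be only a few lines beyond the invocation of Theorem~\ref{matt-nick}.
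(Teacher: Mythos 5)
Your proposal is correct and takes essentially the same approach as the paper: you choose exactly the same splitting triple $(\alpha,\beta,\kappa)=(\nu,\ker(\varepsilon),\ker(\varepsilon))$ and verify conditions (i)--(v) in the same way, with Theorem~\ref{matt-nick} doing all the group-theoretic work. The only addition is your explicit front-end remark that it suffices to split relevant triples of $G$ itself because the class of finite groups with abelian Sylow subgroups is subgroup-closed, which is the general reduction stated at the beginning of Section~6.
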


\begin{proof}
Let $G$ be a finite group with abelian Sylow subgroups.
We have to show that every relevant triple $(\delta,\theta,\nu)$
of $G$ is split by a triple $(\alpha,\beta,\kappa)$
relative to ${\mathcal Q}=\Su\Pd(G)$.
Choose $(\alpha,\beta,\kappa) = (\nu,\ker(\varepsilon),\ker(\varepsilon))$
where $\epsilon$ is the endomorphism from Theorem~\ref{matt-nick}.
Then 
\begin{enumerate}
\item[(i)]
$\kappa$ is a $\mathcal Q$-congruence, since
$\kappa$ is the kernel of an endomorphism of $G$. 
\item[(ii)] 
$\beta=\ker(\varepsilon)\leq \delta$,
\item[(iii)] 
$\alpha\wedge\beta = \nu\wedge\ker(\varepsilon)=
\ker(\varepsilon) = \kappa$,
\item[(iv)] 
$\alpha\vee\beta = \nu\vee\ker(\varepsilon)=\nu$, and
\item[(v)] 
$[\alpha,\alpha]=[\nu,\nu]\leq \ker(\varepsilon)=\kappa$.
\end{enumerate}
Hence the
conditions required for $(\alpha,\beta,\kappa)$
are met.
\end{proof}

\begin{exmp}
Pawe{\l} Idziak proved in \cite{idziak}
that the algebra $\m a$ obtained from the 
six-element symmetric group $S_3$ by adjoining
all six nullary operations is not a dualizable algebra.
This is in contrast to Corollary~\ref{groups},
which establishes that $S_3$ without the additional
constants \emph{is} dualizable. It is worth pointing
out how Idziak's example fails the conditions
of Theorem~\ref{thm-main}. The algebra $\m a$ has
three congruences, $0 < \delta < 1$, where
$\delta$ is the congruence on $S_3$ corresponding
to the alternating group. The triple $(\delta,\theta,\nu)=(\delta,1,1)$
is relevant. For $\mathcal Q =
\Su\Pd(\m a)$, the only $\mathcal Q$-congruences on $\m a$
are $0$ and $1$, and $1\not\leq \delta$,
so if $(\alpha,\beta,\kappa)$ is a splitting triple
for $(\delta,\theta,\nu)$
then 
\begin{enumerate}
\item[(i)]
$\kappa$ must equal $0$. 
\end{enumerate}
To establish dualizability our theorem 
also requires that
\begin{enumerate}
\item[(ii)] 
$\beta\leq \delta$,
\item[(iii)] 
$\alpha\wedge\beta = \kappa \;(= 0)$,
\item[(iv)] 
$\alpha\vee\beta = \nu \;(= 1)$, and
\item[(v)] 
$[\alpha,\alpha]\leq\kappa \;(= 0)$.
\end{enumerate}
But $\delta$ is the largest abelian congruence on $\m a$,
so $\alpha\leq \delta$. This and property 
(ii) 
yield
that $\alpha\vee\beta\leq\delta<1$, contrary to 
(iv).

If we had not added the additional nullary operations,
then the congruence
$\delta$ would be another $\mathcal Q$-congruence and in this situation
$(\alpha,\beta,\kappa)=(1,\delta,\delta)$ would be a splitting
triple for $(\delta,1,1)$.
\end{exmp}

We close with a different kind of application of Theorem~\ref{thm-main}.

\begin{thm}\label{quasivariety}
If $\m a$ is a finite algebra with a parallelogram term
and $\Su\Pd(\m a)$ is a variety, then $\m a$ is dualizable.
\end{thm}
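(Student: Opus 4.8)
The plan is to obtain this as a corollary of Theorem~\ref{thm-main}, so that the task reduces to verifying that $\m a$ satisfies the split centralizer condition. Since $\m a$ has a parallelogram term, Theorem~\ref{thm-parterm-cm} tells us that $\var{V}(\al A)$ is congruence modular, and the hypothesis that $\Su\Pd(\m a)$ is a variety means precisely that $\Su\Pd(\m a)=\var{V}(\al A)$. The first thing I would record is that every subdirectly irreducible member of $\Su\Pd(\m a)$, being a subdirectly irreducible subalgebra of a power of $\al A$, embeds into $\al A$; hence $\var{V}(\al A)$ is residually ${<}\,|A|+1$. By Theorem~\ref{thm-rs} --- more precisely, by the equivalence of its condition~(b) with condition~(d) in the discussion following it --- we then get $[\nu,\nu]\le\delta$ for every relevant triple $(\delta,\theta,\nu)$ of every subalgebra of $\al A$.

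With this in hand the splitting triples essentially write themselves. Fix $\al B\le\al A$, a relevant triple $(\delta,\theta,\nu)$ of $\al B$, and put $\var{Q}:=\Su\Pd(\m a)$. Since $\theta/\delta$ is abelian, $[\theta,\theta]\le\delta$ gives $\theta\le(\delta:\theta)=\nu$, so in particular $\delta\le\nu$. I would then take $(\alpha,\beta,\kappa):=(\nu,\delta,\delta)$ and check the five conditions from the Introduction: (i) $\kappa=\delta$ is a $\var{Q}$-congruence because $\al B\in\var{Q}$ and $\var{Q}$, being a variety, is closed under homomorphic images, so $\al B/\delta\in\var{Q}$; (ii) $\beta=\delta\le\delta$; (iii) $\alpha\wedge\beta=\nu\wedge\delta=\delta=\kappa$ since $\delta\le\nu$; (iv) $\alpha\vee\beta=\nu\vee\delta=\nu$ since $\delta\le\nu$; and (v) $[\alpha,\alpha]=[\nu,\nu]\le\delta=\kappa$ by the previous paragraph. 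As $\al B$ and $(\delta,\theta,\nu)$ were arbitrary, $\al A$ satisfies the split centralizer condition, and Theorem~\ref{thm-main} then gives that $\al A$ is dualizable.

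I do not expect a genuine obstacle here; the one point worth stressing is that the hypothesis ``$\Su\Pd(\m a)$ is a variety'' is used twice, for two different reasons --- once to make \emph{every} congruence of \emph{every} subalgebra of $\al A$ automatically a $\var{Q}$-congruence, so that the choice $\kappa=\delta$ is legitimate, and once to force residual smallness of $\var{V}(\al A)$ (via the fact that its subdirectly irreducibles live inside $\al A$), which is what delivers the inequality $[\nu,\nu]\le\delta$ needed for condition~(v). It is reassuring that the resulting recipe $(\alpha,\beta,\kappa)=(\nu,\delta,\delta)$ is exactly the splitting triple exhibited for $(\delta,1,1)$ in the discussion of Idziak's example, in the case where $\delta$ happens to be a $\var{Q}$-congruence.
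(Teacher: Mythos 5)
Your proof is correct and follows essentially the same route as the paper: both argue that $\Su\Pd(\al A)$ being a variety forces it to be residually small (hence (C1), i.e.\ $[\nu,\nu]\le\delta$ for relevant triples) and that every congruence is then automatically a $\var{Q}$-congruence, and both exhibit the identical splitting triple $(\alpha,\beta,\kappa)=(\nu,\delta,\delta)$. You spell out the verification of conditions (i)--(v) and the residual-smallness argument in more detail than the paper's terse version, but there is no substantive difference.
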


\begin{proof}
If $\mathcal Q:=\Su\Pd(\m a)$ is a variety, then 
it must be residually small, hence satisfies (C1)
(by Theorems~\ref{thm-rs} and \ref{thm-parterm-cm}).
Every 
congruence on every algebra 
in $\var{Q}$
is a $\mathcal Q$-congruence, 
so 
if 
$(\delta,\theta,\nu)$ is relevant, then
it is split by $(\nu,\delta,\delta)$,
because (C1) implies that
$[\nu,\nu]\le\delta$.
\end{proof}

Here it is worth pointing out that if 
$\mathcal V$ is a variety with only finitely
many subdirectly irreducible algebras,
each 
finite and
having a 1-element subalgebra, then the product
$\m a$ of 
all these subdirectly irreducible algebras 
is a finite algebra with the property 
that $\mathcal V = \Su\Pd(\m a)$.
If a variety $\mathcal V$, like this, has a parallelogram term,
then Theorem~\ref{quasivariety},
combined with the main result of \cite{davey-willard},
proves that each quasivariety generator for $\mathcal V$
is dualizable. 

\bigskip
\noindent
{\bf Acknowledgment.}
The research reported in this paper was carried out while visiting 
La Trobe University as a guest of Brian Davey. 
We gratefully acknowledge 
the support and encouragement provided by our host.
We also thank Ross Willard for reading a preliminary draft of the paper and
recommending improvements.

\bibliographystyle{plain}

\end{document}